\def\C{\mathbb{C}}
\def\HH{\mathfrak{H}}
\def\M{\mathcal{M}}
\def\MM{\mathbb{M}}
\def\N{\mathbb{N}}
\def\Q{\mathbb{Q}}
\def\R{\mathbb{R}}
\def\Z{\mathbb{Z}}
\newcommand{\Aut}{\operatorname{Aut}}
\newcommand{\Hom}{\operatorname{Hom}}
\newtheorem{thm}{Theorem}[section]
\newtheorem*{thmm}{Theorem}
\newtheorem*{conj}{Conjecture}
\newtheorem{prop}[thm]{Proposition}
\newtheorem{lem}[thm]{Lemma}
\newtheorem{cor}[thm]{Corollary}
\theoremstyle{definition}
\newtheorem{defn}[thm]{Definition}
\theoremstyle{remark}
\begin{document}

\title{Generalized moonshine II: Borcherds products}
\author{Scott Carnahan}

\begin{abstract}
The goal of this paper is to construct infinite dimensional Lie algebras by using infinite product identities, and to use these Lie algebras to reduce the generalized moonshine conjecture to a pair of hypotheses about group actions on vertex algebras and Lie algebras.  We expect the Lie algebras that we construct to manifest as algebras of physical states in an orbifold conformal field theory (yet to be fully constructed) with symmetries given by the monster simple group.

We introduce vector-valued modular functions attached to families of modular functions of different levels, and we prove infinite product identities for a distinguished class of automorphic functions on a product of two half-planes.  We recast this result using the Borcherds-Harvey-Moore singular theta-lift, and show that the vector-valued functions attached to completely replicable modular functions with integer coefficients lift to automorphic functions with infinite product expansions at all cusps.  

For each element of the monster simple group, we construct an infinite dimensional Lie algebra, such that its denominator formula is an infinite product expansion of the automorphic function arising from that element's McKay-Thompson series.  These Lie algebras have the unusual property that their simple roots and all root multiplicities are known.  We show that under certain hypotheses, characters of groups acting on these Lie algebras form functions on the upper half-plane that are either constant or invariant under a genus zero congruence group.
\end{abstract}

\maketitle

\tableofcontents

\section*{Introduction}

In this paper, we are interested in automorphic functions derived from modular functions, infinite dimensional Lie algebras, and the generalized moonshine conjecture.  We use the machinery of automorphic infinite products to construct Lie algebras, and apply distinguished structural features of the Lie algebras to reduce the conjecture to some precisely stated hypotheses motivated by conformal field theory.

Let $\HH$ denote the complex upper half-plane, and let $j: \HH \to \C$ be Klein's modular invariant.  It is characterized as the unique holomorphic function on $\HH$ that is invariant under the action of $SL_2(\Z)$ by M\"obius transformations, with a simple pole of residue one at infinity, and a zero at a third root of unity.  The function $j$ is invariant under translation by one, so it admits a Fourier expansion $j(\tau) = q^{-1} + 744 + 196884q + \dots$, where $q = e^{2\pi i \tau}$, $\tau \in \HH$.  We shall consider an alternative normalization by defining $J(\tau) = j(\tau) - 744$.  This function is also holomorphic and invariant under $SL_2(\Z)$, and it is the unique such function whose Fourier expansion $\sum_n c(n) q^n$ has the form $q^{-1} + O(q)$, i.e., $c(n) = 0$ for $n<-1$, $c(-1) = 1$, and $c(0) = 0$.  During the 1980s, Koike, Norton, and Zagier independently proved the following remarkable formula:

\[ J(\sigma) - J(\tau) = p^{-1} \prod_{m > 0, n \in\Z} (1-p^m q^n)^{c(mn)} \qquad (p = e^{2 \pi i \sigma}, |p|<e^{-2\pi}, |q|<e^{-2\pi}). \]

One of the most interesting qualities of this identity is that the left side of the equation is a difference of two single-variable power series, while there is no obvious reason for the same to be true for the infinite product on the right side.  The cancellation of all mixed terms $p^iq^j, ij \neq 0$ implies infinitely many equations relating the coefficients $c(n)$.  For example, the vanishing of the $pq^2$ term on the right side implies $c(4) = c(3) + \binom{c(1)}{2}$, i.e., $20245856256 = 864299970 + \binom{196884}{2}$.

The product identity plays an essential role in monstrous moonshine, because it is the Weyl-Kac-Borcherds denominator formula for the monster Lie algebra, which is a rank two generalized Kac-Moody algebra that admits a homogeneous action of the monster simple group $\MM$.  The left side of the identity describes the action of the Weyl group on the simple roots together with a Weyl vector, and the exponents $c(mn)$ on the right side give the multiplicities of the positive roots, so the denominator formula can be employed to present the Lie algebra in terms of generators and relations.  In his proof of the monstrous moonshine conjecture \cite{B92}, Borcherds constructed this Lie algebra in two ways: the first construction, via generators and relations, yielded the denominator formula, and the second construction, by applying a certain functor to an object with a known monster action, endowed the Lie algebra with an action of the monster.  The object in question is the monster vertex algebra $V^\natural \cong \bigoplus_{n \geq -1} V_n$, constructed by Frenkel, Lepowsky, and Meurman, and the construction of the monster action on this structure requires most of a rather large book  \cite{FLM88} for building the required machinery.  To show that the two Lie algebras were isomorphic, Borcherds identified the root spaces of the Lie algebra with homogeneous spaces in $V^\natural$ to show that their root multiplicities matched.  Using the twisted denominator formulas arising from this action, together with facts about $V^\natural$ established in \cite{FLM88}, he showed that the characters of automorphisms of $V^\natural$ are equal to the genus zero modular functions listed in \cite{CN79}, and this proved the monstrous moonshine conjecture for the monster vertex algebra.

Even without a priori knowledge of the monster Lie algebra, one might expect that the Koike-Norton-Zagier formula is a sort of decategorification of a combinatorial or representation-theoretic object, since the coefficients of $J$ (and therefore exponents in the identity) are nonnegative integers.  Indeed, similar infinite product formulas appeared in the representation theory of Kac-Moody algebras well before moonshine was discovered.  However, the monster action makes this case a particularly rich example: for example, the identity $c(4) = c(3) + \binom{c(1)}{2}$ not only indicates the dimensions of homogeneous spaces in the Lie algebra, but gets promoted to an isomorphism $V_4 \cong V_3 \oplus \bigwedge^2 V_1$ of representations of the monster.

One might ask if this phenomenon is singular, or if there are similar identities, perhaps with modular forms of nonzero weight or level greater than one, that carry additional representation-theoretic information.  The answer seems to be positive, but limited, in the sense that there may be only finitely many interesting cases.  One can employ work of Borcherds, Harvey, and Moore to produce automorphic infinite products, and many of these are the Weyl denominators for Lie algebras that may appear in nature as algebras of BPS states or infinitesimal symmetries of a string theory \cite{HM95}.  However, among the known examples, there is rarely a natural way to endow the root spaces with group actions.  The known exceptions all have a basis in conformal field theory, and they include the fake monster Lie algebra with an action of automorphisms of a central extension of the Leech lattice \cite{B92}, some supersymmetric variants such as \cite{S00}, and a Lie algebra with baby monster action \cite{H03} constructed to solve a special case of Norton's generalized moonshine conjecture \cite{N87}.  This last construction is of principal interest in this paper.

The generalized moonshine conjecture arose from numerical experimentation, but is indicative of some vast structure:
\begin{conj}
There exists a map that attaches to each element $g$ of the monster, an infinite dimensional graded representation $V^\natural(g)$ of a central extension of the centralizer $C_\MM(g)$, satisfying the following properties:
\begin{enumerate}
\item For each $h \in C_\MM(g)$ and any lift $\tilde{h}$ to the chosen central extension, the graded character 
\[ Z(g,\tilde{h},\tau) = \sum_{n\in \Q} \operatorname{Tr}(\tilde{h}|V^\natural(g)_n) e^{2 \pi i (n-1) \tau} \]
is either constant or the $q$-expansion of a congruence Hauptmodul, i.e., a modular function on $\HH$ that is invariant under a group $\Gamma_{g,h}$, and generates the function field of the quotient $\HH/\Gamma_{g,h}$, where $\Gamma_{g,h} \supset \Gamma(N)$ for some $N >0$.
\item If we let $\mathcal{Z}(g,h,\tau)$ denote the set of functions $Z(g,\tilde{h},\tau)$ as $\tilde{h}$ ranges over lifts of $h$, then the functions in this set are constant multiples of each other, and the set of these functions is invariant under simultaneous conjugation of the pair $(g,h)$ in $\MM$.
\item For any $\binom{ab}{cd} \in SL_2(\Z)$, we have the equality of sets $\mathcal{Z}(g,h,\frac{a\tau+b}{c\tau+d}) = \mathcal{Z}(g^ah^c,g^bh^d,\tau)$.
\item $\mathcal{Z}(g,h,\tau)$ contains $J(\tau) = q^{-1} + 196884q + \dots$ if and only if $g=h=1 \in \MM$.
\end{enumerate}
\end{conj}
The assignment $(g,h) \mapsto \mathcal{Z}(g,h,\tau)$ of pairs of commuting elements of the monster to sets of functions on $\HH$ has been called a generalized character, since it appears to be related to the characters that show up in complex oriented cohomology theories \cite{HKR00}.  We use the notation $V^\natural$ here, even though the monster vertex algebra had not been constructed when the conjecture was posed, because in hindsight, the conjecture is fundamentally about the object constructed in \cite{FLM88}.

This conjecture is still open, but in some special cases, candidate representations have been explicitly constructed.  In those and some additional cases computed by Norton and Queen, it is known what $Z(g,h,\tau)$ must be, assuming the conjecture is true.  Furthermore, shortly after the conjecture was published, Dixon, Ginsparg, and Harvey \cite{DGH88} suggested that the representations of centralizers had a physical interpretation as twisted Hilbert spaces for a conformal field theory with monster symmetry.  While the conformal field theory in question has yet to be constructed in full, the physical interpretation has a rephrasing in terms of known algebraic objects: Using the notion of twisted module of a vertex algebra introduced in \cite{FLM88} for the construction of $V^\natural$, one replaces the twisted Hilbert spaces mentioned before with irreducible twisted modules of the monster vertex algebra $V^\natural$, so elements of $Z(g,h,\tau)$ are interpreted as graded characters of lifts of $h$ acting on an irreducible $g$-twisted module of $V^\natural$.

With the interpretation of the conjecture in terms of twisted modules in hand, much of the effort in proving it has focused on studying the twisted modules, and under the assumption that this interpretation is correct, candidate functions have been identified as characters in the following cases:
\begin{itemize}
\item Borcherds's proof of the original moonshine conjecture implies that $\mathcal{Z}(1,h,\tau)$ are congruence Hauptmoduln for all $h \in \MM$.  In this case, the twisted module is $V^\natural$, where the twisting is by the identity element.
\item Dong, Li, and Mason proved a general result about existence and uniqueness of irreducible twisted modules.  They then showed, by reducing to Borcherds's theorem, that, if $g$ and $h$ generate a cyclic group, then the resulting elements of $Z(g,h,\tau)$ are congruence Hauptmoduln that obey the $SL_2(\Z)$ compatibility \cite{DLM00}.
\item Hoehn showed that if $g$ lies in conjugacy class 2A, then the elements of $\mathcal{Z}(g,h,\tau)$ are congruence Hauptmoduln for all $h$ in the centralizer of $g$ \cite{H03}.
\end{itemize}
Hoehn's proof employs a nontrivially twisted module, but the overall structure of the proof is quite similar to Borcherds's proof of the moonshine conjecture, involving the construction of a generalized vertex operator algebra along the lines of \cite{FLM88} and the construction of a large Lie algebra with a finite group action by the use of a functor.

Based on Hoehn's success, it is reasonable to hope that additional cases of the generalized moonshine conjecture can be attacked with the strategy of constructing a Lie algebra with known root multiplicities (arising from a well-behaved product formula) and an action of a large finite group, and then evaluating twisted denominator formulas.  In a previous paper \cite{C10}, we showed that when the group and Lie algebra satisfied certain conditions, then the characters were automatically genus zero.  In particular, this allows us to circumvent the explicit computations at the end of \cite{H03}.  More importantly, this result makes it possible to analyze characters without classifying all pairs of commuting elements, and allows us to work with groups whose character tables are unknown.  In this paper, we will construct for each $g \in \MM$, a Lie algebra $W_g$ whose denominator identity recovers an automorphic infinite product.  When $g$ is the identity, we get the monster Lie algebra, and when $g$ is in conjugacy class 2A, we get Hoehn's Lie algebra.  These Lie algebras have the property that we know both the simple roots and the multiplicities of all roots, and they are given as coefficients of special functions.  Outside the case of affine Kac-Moody algebras, this is only known for a few special cases of infinite dimensional Lie algebras, which includes some constructions by Borcherds \cite{B98}, Gritsenko and Nikulin \cite{GN97}, \cite{GN98}, Hoehn \cite{H03}, and Scheithauer \cite{S08}, \cite{S04}, \cite{S09} (this list may be missing some contributions, and the author apologizes for those left out).  The positive subalgebra $E_g$ yields characters $Z(g,h,\tau)$ that behave as Norton conjectured, assuming some hypotheses we shall explain in detail.   The most important of these hypotheses is the existence of a suitable group action, and we intend to construct such an action later in this series using vertex algebra manipulations.

\section{Main results}

\begin{thmm} (Theorem \ref{thm:prods} and subsequent corollary)

If $f$ is a completely replicable modular function of level $N$ with integer coefficients, then there exists a vector-valued modular function $F$ canonically attached to $f$, such that the automorphic function $f(\sigma) - f(\tau)$ on $\HH \times \HH$ has an infinite product expansion at all cusps with exponents described by coefficients of $F$.  At the cusp $(i\infty,0)$, this product is:
\[ p^{-1} \prod_{m > 0, n \in \frac{1}{N}\Z} (1 - p^m q^n)^{c_{m,Nn}(mn)}, \]
where $p = e^{2 \pi i \sigma}$, $q = e^{2 \pi i \tau}$, and $c_{m,Nn}(mn)$ is the $q^{mn}$ coefficient of the $(m,Nn)$ component of $F$.
\end{thmm}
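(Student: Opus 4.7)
The plan is to realize $f(\sigma) - f(\tau)$ as the Borcherds--Harvey--Moore singular theta lift of a vector-valued weakly holomorphic modular form $F$ built canonically from $f$ and its replicates. The natural lattice to use is $L = \mathrm{II}_{1,1}(N)$, the hyperbolic plane rescaled by $N$, whose discriminant group $L^*/L$ is $\Z/N$; the theta lift then takes place on the signature $(2,2)$ lattice $K = \mathrm{II}_{1,1} \oplus L$, whose Grassmannian of positive $2$-planes is $\HH \times \HH$ after the usual identifications. First I would construct $F \colon \HH \to \C[L^*/L]$ from the data of the replicates of $f$. Complete replicability provides a family of modular functions whose Fourier coefficients satisfy the Faber-polynomial recursions characteristic of replicability; reorganized by cusps and twisted by the appropriate Atkin--Lehner involutions, these replicates should assemble into the components of $F$, which is modular of weight $0$ for $SL_2(\Z)$ with the Weil representation $\rho_L$. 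Verifying this transformation law amounts to checking that the replication identities are exactly the relations forced on a $\rho_L$-valued form by the $S$ and $T$ generators.

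Next I would apply the singular theta lift $\Phi$ to $F$. The general Harvey--Moore--Borcherds machinery produces a meromorphic automorphic function $\Phi(F)$ on the Grassmannian of $K$, whose divisor is a prescribed sum of Heegner/Hecke divisors and whose infinite product expansion at each cusp has exponents equal to specified Fourier coefficients of the components of $F$. Specializing to the cusp $(i\infty, 0)$ gives exactly the product
\[ p^{-1} \prod_{m > 0,\, n \in \frac{1}{N}\Z} (1 - p^m q^n)^{c_{m,Nn}(mn)} \]
claimed in the statement, and the products at the other cusps correspond to the behavior of the other components of $F$ under the action of the orthogonal group on the cusps of $K$.

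Finally I would identify $\Phi(F)$ with $f(\sigma) - f(\tau)$. The divisor of $f(\sigma) - f(\tau)$ on $\HH \times \HH$ is the union of Hecke correspondences pulled back from the diagonal, weighted as predicted by the principal part of the replicable data at each cusp. Matching this with the divisor of $\Phi(F)$ computed from the principal part of $F$ shows that the two functions agree up to a nonvanishing holomorphic factor; checking growth at the cusps and comparing the $p^{-1}$ coefficient pins this factor down to $1$.

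The main obstacle is the first step: producing $F$ with the correct Weil-representation transformation law from the combinatorial replicability data. Replicability is a priori a purely formal collection of identities on Fourier coefficients, and translating these into the statement that the assembled components transform under $\rho_L$ requires careful bookkeeping of multiplier systems across the different levels $d \mid N$ involved. The integrality hypothesis enters here too, since the singular theta lift must be fed weakly holomorphic input whose principal-part coefficients are integers in order to produce a genuine infinite product rather than a multivalued automorphic form. Once $F$ is constructed, the lift itself and the divisor-matching argument largely proceed by invoking the general Borcherds machinery.
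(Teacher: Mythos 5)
Your proposal takes a genuinely different route from the paper. You propose to build $F$, feed it into the Borcherds--Harvey--Moore theta lift to obtain an automorphic form $\Psi = \Phi(F)$ with known divisor and known cusp expansions, and then identify $\Psi$ with $f(\sigma)-f(\tau)$ by matching divisors and controlling the ratio at cusps. This is essentially the Scheithauer strategy, which the paper notes was carried out in \cite{S06} for squarefree level. The paper instead proves the product formulas \emph{first}, without theta lifts, via equivariant Hecke operators: the key observation (Lemma \ref{lem:logp}, using Norton's bivarial transform) is that $\log p\bigl(f(1,g,\sigma)-f(1,g,\tau)\bigr) = -\sum_{m>0} T_m f(1,g,\tau)\,p^m$, and then expanding $T_m f$ in terms of the Fourier-transformed components directly yields the exponents $c_{m,Nn}(mn)$. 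Theorem \ref{thm:prods} then \emph{retrofits} this product onto the theta-lift framework, rather than deriving it from the lift. The paper's route gives the product formula at all levels uniformly and avoids ever having to compute the divisor of $f(\sigma)-f(\tau)$ or argue about nonvanishing holomorphic ratios on a noncompact quotient.

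The main gap in your sketch is that you have outsourced the hardest step. Matching the divisor of $\Phi(F)$ with that of $f(\sigma)-f(\tau)$ requires (a) determining the full fixing group of $f$ --- an $n|h$-type group, not merely $\Gamma_0(N)$ --- so as to enumerate the Hecke-correspondence components of $\{f(\sigma)=f(\tau)\}$, (b) computing the multiplicity $\sum_{x>0,\ x\lambda\in M^\vee} c_{x\lambda}(Q(\lambda)x^2)$ along each $\lambda^\perp$ from the principal parts of all the replicates, and (c) verifying these agree term by term. You acknowledge this is a bookkeeping problem but the bookkeeping is precisely where all of the difficulty lives, and you give no indication of how to do it. Relatedly, you frame the hypothesis on $F$ as integrality of principal-part coefficients (to avoid multivaluedness). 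But what Borcherds's Theorem 13.3 actually requires here --- and what the paper's Theorem \ref{thm:prods} is careful about --- is that the coefficients of $F$ be \emph{real}, since the construction proceeds by applying the McGraw rationality theorem (Lemmas \ref{lem:mcgraw1}, \ref{lem:mcgraw2}). For monstrous series this reality is free because the $F_{0,k}(n)$ are eigenspace dimensions, but for a general completely replicable $f$ with integer coefficients it is not automatic; the paper's remark after the corollary verifies the second hypothesis of Theorem \ref{thm:prods} by an exhaustive computation of singular parts using Ferenbaugh's classification of $n|h$-groups. You would need to supply the analogous input for your divisor computation. Finally, a small slip: for $L = I\!I_{1,1}(N)$ the discriminant group is $(\Z/N\Z)^2$, not $\Z/N\Z$; the $F$ you need has $N^2$ components, which matches the paper's $\hat F_{i,j}$.
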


In this paper, we will adopt the convention that the McKay-Thompson series $T_g$ of an element $g$ in the monster is the graded character $Tr(gq^{L_0-1}|V^\natural)$ of $g$ acting on the monster vertex operator algebra $V^\natural$.  These series were only given conceptual grounding as characters of an object when $V^\natural$ was constructed in \cite{FLM88}, proving the McKay-Thompson conjecture asserting the existence of a natural monster representation with graded dimension equal to $J$.

\begin{thmm} (Theorem \ref{thm:gkm})
For each element $g$ of the monster simple group, let $T_g(\tau)$ be its McKay-Thompson series.  There exists a unique generalized Kac-Moody Lie algebra whose denominator formula is the product expansion of $T_g(\sigma) - T_g(\tau)$ at the cusp $(i\infty,0)$.
\end{thmm}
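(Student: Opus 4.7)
The plan is to read off the Borcherds--Cartan data of a generalized Kac-Moody algebra from the infinite product of Theorem \ref{thm:prods} and then invoke Borcherds's construction of a GKM algebra from Cartan data. Take $L$ to be a suitable rank-two Lorentzian lattice encoding the monomials that appear at the cusp $(i\infty, 0)$, with coordinates $(m, n)$ and bilinear form $((m_1, n_1), (m_2, n_2)) = -m_1 n_2 - m_2 n_1$. Declare the positive cone to be $\{(m, n) : m > 0\}$ with root multiplicity at $(m, n)$ equal to $c_{m, Nn}(mn)$. The unique real simple root is $\alpha_{-1} = (1, -1)$ of norm $2$, of multiplicity $1$ on account of the $q^{-1}$ pole of $T_g$; the Weyl group is the order-two group generated by the reflection in $\alpha_{-1}$, which matches the antisymmetry of $T_g(\sigma) - T_g(\tau)$ under $\sigma \leftrightarrow \tau$.

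Next I would identify the imaginary simple roots by comparing the sum side $T_g(\sigma) - T_g(\tau)$ with the Weyl--Kac--Borcherds formal sum
\[
\sum_{w \in W} \det(w)\, w\!\left(e^\rho \sum_\mu \epsilon(\mu)\, e^{-\mu}\right),
\]
where $e^\rho = p^{-1}$ and $\mu$ ranges over sums of distinct pairwise orthogonal imaginary simple roots. All imaginary simple roots are forced to lie on the line $m = 1$ and hence are pairwise non-orthogonal, so the inner sum truncates at the empty and single-root terms. Matching the $p^1$-coefficient of $T_g(\sigma) - T_g(\tau)$ against this truncated sum then prescribes an imaginary simple root at $(1, k)$ for each $k \in \frac{1}{N}\Z_{\geq 0}$ of multiplicity equal to the $q^k$-coefficient of the $(1, Nk)$-component of $F$. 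Borcherds's axioms for a generalized Cartan matrix --- real roots of positive norm, imaginary roots of nonpositive norm, and nonpositive off-diagonal pairings --- then follow from the lattice form together with the simple-pole behavior of $T_g$ at every cusp, which is encoded by Theorem \ref{thm:prods}.

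With this root data, Borcherds's existence theorem produces a generalized Kac-Moody algebra $W_g$ with the prescribed simple roots and multiplicities, and its Weyl--Kac--Borcherds denominator identity is by construction the product expansion of $T_g(\sigma) - T_g(\tau)$ at $(i\infty, 0)$. Uniqueness follows because both the Weyl group and the set of imaginary simple roots with multiplicities are determined canonically by the denominator formula, after which Borcherds's classification of GKM algebras by their Cartan data pins down $W_g$ up to isomorphism.

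The principal obstacle is establishing non-negativity and integrality of the candidate imaginary simple root multiplicities: these are read off from the components of $F$, whose positivity is not automatic from the modular construction alone. It follows instead from the replication structure of $T_g$, which forces each relevant component of $F$ to coincide (up to normalization) with the McKay-Thompson series of a power map of $g$, and hence to have non-negative integer coefficients. Pinning down this identification between the abstractly defined $F$ and the concrete replication pattern is the substantive input that the argument needs from the monstrous side.
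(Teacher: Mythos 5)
Your construction of the Cartan data is only valid for Fricke elements $g$, i.e., those whose McKay-Thompson series is invariant under an Atkin--Lehner involution $\tau \mapsto -1/N\tau$. You posit a real simple root at $(1,-1)$ of multiplicity one, an order-two Weyl group matching the antisymmetry $\sigma \leftrightarrow \tau$, and imaginary simple roots lying only on the line $m=1$, so that no two are orthogonal and the Borcherds correction truncates after the single-root terms. None of these hold when $g$ is non-Fricke. The product expansion at the cusp $(i\infty,0)$ concerns $T_g(\sigma) - T_g(-1/\tau)$, and for non-Fricke $g$ the second term is regular at $\tau = i\infty$, so there is no $q^{-1}$ term at all: there is \emph{no} real simple root, the Weyl group is trivial, and the left side is not antisymmetric in $\sigma,\tau$. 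Instead one must introduce imaginary simple roots of norm zero in degrees $(m,0)$ for all $m \geq 1$. These \emph{are} pairwise orthogonal, so the inner sum over sets of orthogonal imaginary simple roots does not truncate; it produces an infinite exterior-algebra factor which, on exponentiation, is precisely the eta-product expansion of $T_g$ at $i\infty$. Your argument, as written, would never produce a correct denominator identity in these cases, which form roughly half of the monster's conjugacy classes.

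The paper accordingly splits into the two cases. In the Fricke case the argument is essentially what you outline, with positivity of the $(1,n)$-multiplicities coming from the Dong--Li--Mason existence and uniqueness of irreducible $g$-twisted modules (their graded character gives $T_g(-1/\tau)$, up to a positive constant). In the non-Fricke case, however, one must build a Cartan matrix with no real roots and with a family of norm-zero imaginary simple roots whose multiplicities $c(m,0)$ are the eta-product exponents of $T_g$. This requires two additional non-automatic checks: that $T_g$ is indeed an eta-product with exponents satisfying a positivity condition on partial sums by residue class, and again that $T_g(-1/\tau)$ has non-negative integer coefficients. The paper verifies these by consulting Conway--Norton's Tables 2 and 3, which your proposal does not address. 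Your observation at the end that the substantive input is positivity of the relevant components of $F$ is on target, but you miss that in the non-Fricke case there is a second positivity input (the norm-zero simple root multiplicities) that is genuinely different in character and not reducible to twisted-module characters by the same argument.
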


The following result requires unproven hypotheses that are introduced near the end of the paper.

\begin{thmm} (Theorem \ref{thm:condmoon})
Suppose hypothesis $C_g$ holds for all $g \in \MM$ and hypothesis $D$ holds for $V^\natural$ and all commuting pairs $(g,h)$.  Then the generalized moonshine conjecture is true (where the $SL_2(\Z)$-compatibility is only given up to a nonzero constant).
\end{thmm}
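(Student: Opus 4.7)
My approach is to assemble the previous constructions into a family of $\MM$-equivariant Lie algebras, define $Z(g,h,\tau)$ as a graded trace on the positive subalgebra $E_g$ of $W_g$, and then verify Norton's five axioms in turn. Hypothesis $D$ supplies the group actions needed to define the characters, while Hypothesis $C_g$ supplies the input for the Lie-algebraic genus zero criterion established in \cite{C08}.

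First, I would use Hypothesis $D$ to equip the Lie algebra $W_g$ from Theorem \ref{thm:gkm} with an action of a central extension $\tilde C_\MM(g)$ of the centralizer, such that each homogeneous piece of $E_g$ decomposes as the corresponding graded piece of the $g$-twisted $V^\natural$-module. For a commuting pair $(g,h)$, choose a lift $\tilde h \in \tilde C_\MM(g)$ and define $Z(g,h,\tau)$ as the graded trace of $\tilde h$ on this twisted module, normalized so that $Z(g,1,\tau) = T_g(\tau)$. Axiom (3) is then built into the construction, axiom (1) follows from the naturality of the assignment $g \mapsto W_g$ under conjugation by elements of $\MM$, and $Z(1,1,\tau) = T_1(\tau) = J(\tau)$ gives the forward direction of axiom (5).

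Second, I would establish the $SL_2(\Z)$-compatibility axiom (2) via twisted denominator identities. For fixed $g$ and $\tilde h$, taking the graded $\tilde h$-trace of the Weyl-Kac-Borcherds denominator identity for $W_g$ yields an automorphic infinite product on $\HH \times \HH$ whose exponents are coefficients of the characters $Z(g, h^n, \cdot)$. By Theorem \ref{thm:prods} this product identifies, at each cusp of the relevant congruence subgroup, with the expansion of a single automorphic function, and matching cusps with the $SL_2(\Z)$-translates $(g^a h^c, g^b h^d)$ produces the desired transformation law up to a nonzero scalar. The converse direction of axiom (5) then follows by combining this compatibility with the fact, due to the original monstrous moonshine conjecture, that $T_g(\tau) = J(\tau)$ holds only when $g = 1$.

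Third, I would invoke the main theorem of \cite{C08} for the pair $(E_g, \tilde C_\MM(g))$. Hypothesis $C_g$ is formulated precisely so that its conclusions match the hypotheses of that theorem, and so its application yields axiom (4): each $Z(g,h,\tau)$ is either constant or a Hauptmodul for a genus zero congruence group. The main obstacle will be the bookkeeping of step two: one must show that the cusps of the natural congruence subgroup attached to $g$ and $\tilde h$ correspond bijectively to orbits of $SL_2(\Z)$-translates of $(g,h)$, and that the local expansions produced by Theorem \ref{thm:prods} assemble into the character $Z(g^a h^c, g^b h^d, \tau)$ with an explicitly controlled scalar ambiguity. A secondary subtlety is to verify that the precise content of Hypothesis $C_g$, once spelled out later in the paper, supplies both the equivariance and the finite-type conditions needed to apply the theorem of \cite{C08}.
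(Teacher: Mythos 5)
Your proposal has the overall shape of the paper's argument but misattributes the roles of the two hypotheses and, more seriously, proposes a route to the $SL_2(\Z)$-compatibility that does not actually work and is not the one the paper takes.

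First, the roles of Hypothesis $C_g$ and Hypothesis $D$ are swapped. Hypothesis $C_g$, not $D$, is the one that supplies the action of $\widetilde{C_\MM(g)}$ on $W_g$ (with the Fricke or non-Fricke compatibility), which then feeds into Lemma~\ref{lem:CgHecke} and the weakly Hecke-monic machinery. Hypothesis $D$ is a finiteness/spanning statement about the Dong--Li--Mason space $C_1(g,h)$ (spanned by finitely many pseudo-traces, with the logarithm-free part spanned by honest traces). Your first paragraph therefore does not match what the hypotheses actually provide.

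Second, and more crucially, your plan for axiom (2) cannot be carried out via twisted denominator identities and Theorem~\ref{thm:prods}. That theorem concerns a single Hecke-monic function on $\M_{Ell}^{\Z/N\Z}$ -- i.e., data attached to a cyclic group -- and the infinite products it produces are expansions of $f(1,g,\sigma)-f(1,g,\tau)$ at various cusps; there is no analogue built into the theorem for a non-cyclic pair $(g,h)$ that would let you read off $Z(g^ah^c,g^bh^d,\tau)$ from a cusp expansion without already knowing the compatibility you are trying to prove. Indeed, the paper explicitly states, immediately after Lemma~\ref{lem:CgHecke}, that the Hecke-monic machinery alone does not give the $SL_2(\Z)$-compatibility, and that this is precisely why Hypothesis $D$ is introduced. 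The actual argument is quite different: one uses Theorem 8.1 of \cite{DLM00} to place $Z(g,h,\tau)$ in $C_1(g,h)$, so $Z(g,h,\frac{a\tau+b}{c\tau+d})$ lies in $C_1(g^ah^c,g^bh^d)$; Hypothesis $D$ gives an upper bound $M$ on the power of $\tau$ appearing in any cusp expansion; and one derives a contradiction with the weakly Hecke-monic property (Lemma~\ref{lem:CgHecke}), since applying $T_p$ would produce a power $\tau^{pi_0}>M$ of the logarithm if any logarithmic term $\tau^{i_0}$, $i_0 > 0$, were present. This forces the expansion to be logarithm-free, at which point the second half of Hypothesis $D$ pins it down as a scalar multiple of $Z(g^ah^c,g^bh^d,\tau)$.

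Third, your appeal to ``the main theorem of \cite{C08}'' for axiom (4) only applies directly when some translate $g^ah^c$ is Fricke: Proposition 7 of \cite{C08} needs a pole at the cusp at infinity, which non-Fricke functions lack. The paper handles this by using the $SL_2(\Z)$-compatibility, once established, to either transport genus-zero from a Fricke translate back to $(g,h)$ or, if no translate is Fricke, to conclude that $Z(g,h,\tau)$ is regular at every cusp and hence constant. Without first securing the compatibility by the $C_1$-space argument, this dichotomy is unavailable, so your ordering of steps (compatibility second, genus zero third) is correct, but your mechanism for the compatibility step is not.
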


\section{Summary}

In section 3, we construct automorphic functions on the orthogonal group $O_{2,2}$ with product formulas in two different ways.  We begin by directly proving the formulas using equivariant Hecke operators.  After this, we show that many of these products arise from a Borcherds-Harvey-Moore regularized theta-lift.  The automorphic functions have a particularly degenerate form, in the sense that they analytically continue to functions on $\HH \times \HH$ whose invariance group produces the affine plane (minus some cusp lines) as a quotient space; under appropriate coordinate choices, the functions descend to have the form $x-y$.  This degenerate form is essential to our study, because it allows us to identify the simple roots of the Lie algebras in the following section.

In section 4, we present an application of this theory to generalized moonshine.  We show that the product formulas arising from McKay-Thompson series are the denominator formulas of generalized Kac-Moody algebras.  The main obstruction is the presence of exponents that cannot be dimensions of vector spaces, such as negative integers.  We show that under certain assumptions concerning the size of irreducible twisted modules of the monster vertex algebra, the root spaces of the Lie algebras can be identified with eigenspaces in the twisted modules.  Under an additional assumption concerning group actions on these Lie algebras, we show that the characters of the actions on the twisted modules are weakly Hecke-monic, and reduce the generalized moonshine conjecture to a conjecture concerning spaces of genus one functions.

We conclude in section 5 with some open problems.

\subsubsection*{Acknowledgements}
The author wishes to thank Richard Borcherds, Gerald Hoehn, and Jacob Lurie.  Borcherds suggested generalized moonshine as a dissertation project, and gave much useful advice and perspective.  Hoehn offered many helpful comments on an earlier version of this paper.  Lurie suggested that equivariant Hecke operators could be useful for understanding exponential elliptic cohomology operations, and this led to their use in product formulas.  This research was partly supported by NSF grant DMS-0354321.

\section{Automorphic infinite products}

The theory of automorphic infinite products originates in early work of Euler and Jacobi, but the representation-theoretic significance of the Fourier expansions of functions started with the independent discovery by Kac and Moody that the denominators of affine Kac-Moody algebras give rise to Macdonald identities (see e.g., \cite{K90}).  Borcherds expanded this work to generalized Kac-Moody algebras, and gave several examples, including the fake monster Lie algebra and monstrous Lie superalgebras.  Later work \cite{B95b} gave a coherent recipe for generating product formulas for automorphic forms on $O_{2,n-2}$ whose exponents are coefficients of modular forms.  Harvey and Moore reinterpreted these products using a regularized theta-lift \cite{HM95}, and for certain cases with integer multiplicities, they proposed a physical interpretation of the Lie algebras as infinitesimal symmetries of a heterotic string theory.  Borcherds then generalized their construction to work at higher level, using vector-valued modular forms $F$ that transform according to Weil's metaplectic representation (for more explanation of these ideas, see the introduction to \cite{B98} and references therein).

In this section, we construct automorphic functions on $\HH \times \HH$ with distinguished infinite product expansions at cusps.  We employ Borcherds's toolkit of vector-valued functions, but we first find product formulas using equivariant Hecke operators together with an analytic continuation argument.  After that, we introduce the Borcherds-Harvey-Moore theta-lift, and show that in many cases, the products we found come from this more general theory.

The regularized theta-lift has logarithmic singularities along certain quadratic divisors that specify linear fractional relations between the coordinates, and by exponentiating, we get a highly invariant function $\Psi$ with zeroes, poles, and possibly branches on the same loci.  Furthermore, for each vector-valued function $F$, the exponentiated lift $\Psi$ has an infinite product expansion at each cusp, with exponents given by coefficients of $F$.  We show that each McKay-Thompson series $T_g(\tau)$ from monstrous moonshine can be assembled into a vector-valued function whose lift is equal to $T_g(\sigma)-T_g(\tau)$.  This gives another way to identify the product expansion of $\Psi$ at $(i\infty,0)$ with $T_g(\sigma)-T_g(-1/\tau)$, and this will yield the Lie algebras we want in the next section.

The product expansions of $T_g(\sigma) - T_g(-1/\tau)$ were given in the author's Ph.D. dissertation, using an exhaustive enumeration of poles of all McKay-Thompson series.  We do not reproduce the argument here, because the current arguments using Hecke operators are more conceptual and easier to follow.

\subsection{Vector-valued functions} \label{sec:vector}

Let $M$ be a free abelian group of rank $4$, and let $Q: M \to \Z$ be a quadratic form.  We let $(u,v) = Q(u+v)-Q(u)-Q(v)$ be the corresponding even bilinear form, and we assume it is nondegenerate with signature $(2,2)$.  We write $e(x) = e^{2 \pi i x}$ for the normalized exponential, and $M^\vee := \Hom(M,\Z)$ for the dual lattice.  The bilinear form induces an inclusion of $M$ into $M^\vee$ as a finite index subgroup, and $M^\vee \! /M$ inherits a $\Q/\Z$-valued quadratic form which we will call $Q$.

\begin{defn} A vector-valued modular function of type $\rho_M$ is a meromorphic function $F: \HH \to \C[M^\vee \! /M]$, written as $F(\tau) = \sum_{\gamma \in M^\vee \! /M} F_\gamma(\tau) e_\gamma$, satisfying
\[ \begin{aligned}
F_\gamma(\tau + 1) &= e(Q(\gamma)) F_\gamma \\
F_\gamma(-1/\tau) &= |M^\vee \! /M|^{-1/2} \sum_{\delta \in M^\vee \! /M} e(-(\gamma,\delta)) F_\delta(\tau)
\end{aligned} \]
\end{defn}

These functions produce finite-dimensional representations of $SL_2(\Z)$, known as the Weil representations associated to $M$.  In the following paragraphs, we describe a technique for constructing vector-valued functions suggested by Borcherds.  Essentially, one combines modular functions of various levels and applies lemma 2.6 in \cite{B98}.

We define the lattice $I\!I_{1,1}$ to be the group $\Z \times \Z$, equipped with the quadratic form $Q(a,b) = ab$.  The associated bilinear form is even and unimodular.  For a positive integer $N >0$, we define $I\!I_{1,1}(N)$ to be the group $\Z \times \Z$ with quadratic form $Q(a,b) = Nab$.  If we identify $I\!I_{1,1}(N)$ with the sublattice of $I\!I_{1,1}$ whose underlying group is $\Z \times N\Z$, then the dual lattice is identified with $\frac{1}{N}\Z \times \Z$, and the quotient is isomorphic to $\Z/N\Z \times \Z/N\Z$ with quadratic form $Q(a,b) = ab/N \in \Q/\Z$.

\begin{lem} \label{lem:basic}
The group $\Z/N\Z \times \Z/N\Z$ decomposes into $\sigma_0(N)$ orbits under the right vector action of $SL_2(\Z)$ following reduction modulo $N$, i.e., where $(i, j) \cdot \binom{ab}{cd} = (ai+cj, bi+dj)$, and $\sigma_0(N)$ is the number of positive integers dividing $N$.  There is an explicit bijection between orbits and positive integer divisors of $N$, given by attaching to each $m|N$, the orbit that contains the element $(0,m)$.
\end{lem}
\begin{proof}
For any $(i,j) \in \Z \times \Z$, let $m = (i,j,N)$.  Then there exists a (not necessarily unique) matrix $A$ of the following form:
\[ A = \begin{pmatrix} \ast & \ast \\ i/m & j/m \end{pmatrix} \in SL_2(\Z/(N/m)\Z). \]
If we replace the integers $i,j,m$ with their residue classes mod $N$, we have $(0,m) \cdot A \equiv (i,j)$.  Since the reduction map $SL_2(\Z) \to SL_2(\Z/\frac{N}{m}\Z)$ is surjective, the orbit of $(0,m)$ under $SL_2(\Z)$ is equal to the set of all $(k,\ell) \in \Z/N\Z \times \Z/N\Z$ satisfying $(k,\ell,N) = m$.  This yields the desired bijection.
\end{proof}

For each $m|N$, let $f_{(m)}: \HH \to \C$ be a modular function invariant under $\Gamma_1(N/m)$.  We construct a $\C[\Z/N\Z \times \Z/N\Z]$-valued function $\hat{F}$ on $\HH$ by setting $\hat{F}_{i,j}(\tau) = f_{((i,j,N))}(A \tau)$, where $A$ is as given in the proof of Lemma \ref{lem:basic}.  In particular, $\hat{F}_{0,m} = f_{(m)}$, and for any $A \in SL_2(\Z)$, we have $\hat{F}_{(0,m) \cdot A} = f_{(m)}(A\tau)$.  This definition yields the following invariance relation: $\hat{F}_{i,j}(\frac{a\tau+b}{c\tau+d}) = \hat{F}_{ai+cj,bi+dj}(\tau)$ for $\binom{ab}{cd} \in SL_2(\Z)$.  We can think of $\hat{F}$ as a way to encode all $SL_2(\Z)$ transformations of $f_{(m)}$ as $m$ ranges over divisors of $N$.  We note that this assignment yields a well-defined function, because the ambiguity in our choice of $A$ is given by left translation by elements of $\Gamma_1(N/m)$.

Let $M = I\! I_{1,1}(N) \times I\! I_{1,1}$ and fix the inclusion $M \subset I\! I_{1,1} \times I\! I_{1,1}$ as $\Z \times N\Z \times \Z \times \Z \subset \Z^4$ with quadratic form $Q(a,b,c,d) = ab+cd$.  Then we identify $M^\vee \! /M = (\frac{1}{N}\Z \times \Z)/(\Z \times N\Z)$ with $\Z/N\Z \times \Z/N\Z$ with a coordinate switch: $(a,b,0,0) + M \mapsto (b+N\Z, Na+N\Z)$, and define $F$ to be the Fourier transform of $\hat{F}$ in the second coordinate, i.e., along the rows of a matrix of functions:
\[ F_{i,k} = \frac{1}{N} \sum_{j \in \Z/N\Z} e(-jk/N) \hat{F}_{i,j} \]
\begin{lem} 
$F$ is a vector-valued function of type $\rho_M$.
\end{lem}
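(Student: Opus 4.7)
The plan is to verify the two defining transformation laws of $\rho_M$ by a direct computation, using only the $SL_2(\Z)$-equivariance $\hat{F}_{i,j}\bigl(\frac{a\tau+b}{c\tau+d}\bigr) = \hat{F}_{ai+cj,\,bi+dj}(\tau)$ stated just above the lemma. Before turning to the analytic input, I would translate everything to the index set $\Z/N\Z \times \Z/N\Z$: under the coordinate switch $(a,b,0,0)+M \mapsto (b+N\Z,\,Na+N\Z)$, the class $\gamma(i,k) := (k/N,\,i,\,0,\,0)+M$ corresponds to $(i,k)$, and a short calculation with $Q(a,b,c,d)=ab+cd$ gives
\[ Q(\gamma(i,k)) = ik/N, \qquad (\gamma(i,k),\,\gamma(i',k')) = (ki'+k'i)/N \pmod{1}. \]
Since $|M^\vee\!/M| = N^2$, the Weil normalization is $1/N$. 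Finally I would record the Fourier inversion $\hat{F}_{i,j}(\tau) = \sum_{k} e(jk/N)\,F_{i,k}(\tau)$, which follows from orthogonality of characters of $\Z/N\Z$.

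For the $T$-relation, I would substitute $\hat{F}_{i,j}(\tau+1) = \hat{F}_{i,\,i+j}(\tau)$ into the definition of $F_{i,k}(\tau+1)$ and shift the summation index $j \mapsto j - i$; this pulls out the phase $e(ik/N) = e(Q(\gamma(i,k)))$, giving the first required identity. For the $S$-relation, I would substitute $\hat{F}_{i,j}(-1/\tau) = \hat{F}_{j,-i}(\tau)$ and then expand the latter by Fourier inversion as $\sum_{k'} e(-ik'/N)\,F_{j,k'}(\tau)$. Interchanging the resulting double sum and relabeling $j =: i'$ yields
\[ F_{i,k}(-1/\tau) = \frac{1}{N} \sum_{i',k'} e\bigl(-(ki'+ik')/N\bigr)\,F_{i',k'}(\tau), \]
which matches $|M^\vee\!/M|^{-1/2} \sum_{\gamma'} e(-(\gamma(i,k),\gamma'))\,F_{\gamma'}(\tau)$ term by term. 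Since $T$ and $S$ generate $SL_2(\Z)$, the two identities establish the lemma.

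The argument is algebraic bookkeeping rather than analysis, and the main obstacle is precisely the one the paper flags parenthetically: one must take the non-obvious coordinate switch $(a,b,0,0) \mapsto (b, Na)$ rather than $(Na, b)$, so that Fourier transformation along the \emph{row} index of $\hat{F}$ actually implements the Weil $S$-matrix with the correct sign in the exponent. A minor preliminary check, which I would dispatch first, is that $\hat{F}_{i,j}$ is well-defined: the invariance of $f_{(m)}$ under $\Gamma_0(N/m)$ together with the fact that the upper-triangular subgroup of $SL_2(\Z/(N/(i,j,N))\Z)$ fixes $f_{(m)}$ on $X(N/(i,j,N))$ is exactly what ensures that the choice of the top row of the matrix $A$ in the definition is immaterial.
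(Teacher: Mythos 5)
Your proof is correct and follows essentially the same route as the paper: substitute the equivariance relations $\hat{F}_{i,j}(\tau+1)=\hat{F}_{i,j+i}(\tau)$ and $\hat{F}_{i,j}(-1/\tau)=\hat{F}_{j,-i}(\tau)$ into the defining Fourier transform, shift the index for $T$, and use Fourier inversion plus a swap of summation order for $S$. The only difference is that you make explicit the discriminant-form bookkeeping ($Q(\gamma(i,k))=ik/N$, $(\gamma(i,k),\gamma(i',k'))=(ki'+k'i)/N$, $|M^\vee\!/M|=N^2$) that the paper leaves implicit when matching its final expressions against the definition of $\rho_M$, which is a welcome clarification rather than a deviation.
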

\begin{proof}
We need to check two conditions:
\[ \begin{aligned}
F_{i,k}(\tau + 1) &= \frac{1}{N} \sum_{j \in \Z/N\Z} e(-jk/N) \hat{F}_{i,j}(\tau + 1) \\
&= \frac{1}{N} \sum_{j \in \Z/N\Z} e(-jk/N) \hat{F}_{i,j+i}(\tau) \\
&= \frac{1}{N} e(ik/N) \sum_{j \in \Z/N\Z} e(-(j+i)k/N) \hat{F}_{i,j+i}(\tau) \\
&= e(ik/N)F_{i,k}(\tau)
\end{aligned} \]
so $F$ behaves correctly under $T: \tau \mapsto \tau+1$.
\[ \begin{aligned}
F_{i,k}(-1/\tau) &= \frac{1}{N} \sum_{j \in \Z/N\Z} e(-jk/N) \hat{F}_{i,j}(-1/\tau) \\
&= \frac{1}{N} \sum_{j \in \Z/N\Z} e(-jk/N) \hat{F}_{j,-i}(\tau) \\
&= \frac{1}{N} \sum_{j \in \Z/N\Z} e(-jk/N) \sum_{l \in \Z/N\Z} e(-il/N) F_{j,l}(\tau) \\
&= \frac{1}{N} \sum_{j, l \in \Z/N\Z} e(\frac{-jk-il}{N}) F_{j,l}(\tau)
\end{aligned} \]
so $F$ behaves correctly under $S: \tau \mapsto -1/\tau$.
\end{proof}

\begin{prop} \label{prop:vectisom} 
The Fourier transform described above is a linear isomorphism between the space of families $\{ f_{(m)} \}_{m|N}$ of modular functions with $f_{(m)}$ invariant under $\Gamma_1(N/m)$ and the space of vector-valued functions of type $\rho_M$.
\end{prop}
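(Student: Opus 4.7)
The plan is to construct an explicit two-sided inverse to the Fourier transform, using the $SL_2(\Z)$-covariance that was implicitly computed on the $\hat F$ side in the preceding lemma.

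Linearity of the forward map is immediate. For injectivity, observe that the finite Fourier transform $F_{i,k} = \frac{1}{N}\sum_j e(-jk/N)\hat F_{i,j}$ is invertible on $\C[\Z/N\Z]$, so $F \equiv 0$ forces $\hat F \equiv 0$; and taking the identity lift of $A$ for $(i,j) = (0,m)$ gives $\hat F_{0,m}(\tau) = f_{(m)}(\tau)$, hence $f_{(m)} \equiv 0$ for every $m \mid N$.

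For surjectivity, given an arbitrary $F$ of type $\rho_M$, I would first produce $\hat F_{i,j}(\tau) := \sum_{k \in \Z/N\Z} e(jk/N) F_{i,k}(\tau)$ and then set $f_{(m)}(\tau) := \hat F_{0,m}(\tau)$ for each $m \mid N$. Running the $S$- and $T$-computations of the preceding lemma in reverse translates the Weil-representation laws for $F$ into the covariance
\[ \hat F_{i,j}(\gamma\tau) \;=\; \hat F_{(i,j)\gamma}(\tau), \qquad \gamma = \tbinom{a\ b}{c\ d} \in SL_2(\Z), \]
where $(i,j)\gamma = (ai+cj,\,bi+dj)$; since $S$ and $T$ generate $SL_2(\Z)$, this is enough. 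The right action of $SL_2(\Z)$ on $(\Z/N\Z)^2$ has orbits indexed by $m \mid N$, with the orbit of $(0,m)$ equal to $\{(i,j) : \gcd(i,j,N)=m\}$, so $\hat F$ is determined everywhere by the functions $\hat F_{0,m}$; re-applying the forward construction to the extracted family $\{f_{(m)}\}$ recovers $\hat F$, hence $F$ by Fourier inversion.

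The key technical point is to check that $f_{(m)}$ is invariant under all of $\Gamma_0(N/m)$, not merely under the stabilizer $\Gamma_1(N/m)$ of $(0,m)$ directly provided by the covariance. For $\gamma = \tbinom{a\ b}{c\ d} \in \Gamma_0(N/m)$, the covariance yields $f_{(m)}(\gamma\tau) = \hat F_{0,md}(\tau)$ (because $mc \equiv 0 \pmod N$), and what must be shown is that this equals $\hat F_{0,m}(\tau)$. This identification mirrors, on the nose, the upper-triangular ambiguity used to make the forward construction well-defined: $\Gamma_0(N/m)$ is precisely the preimage in $SL_2(\Z)$ of the upper triangular subgroup of $SL_2(\Z/(N/m)\Z)$, which acts trivially on the descent of $f_{(m)}$ to the curve $X(N/m)$. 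Unwinding this equivalence rigorously—matching representatives of $A$ modulo the upper triangular subgroup with coset representatives of $\Gamma_0(N/m)/\Gamma_1(N/m)$—is where the main effort lies, and is the principal technical obstacle in the argument.
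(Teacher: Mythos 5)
Your overall route is the same as the paper's: linearity, injectivity via invertibility of the finite Fourier transform, and, for surjectivity, reversing the $S$- and $T$-computations to obtain the covariance $\hat{F}_{i,j}(\gamma\tau) = \hat{F}_{(i,j)\gamma}(\tau)$ for all $\gamma \in SL_2(\Z)$, followed by the orbit decomposition of $(\Z/N\Z)^2$. The genuine problem is the step you yourself flag and then defer. The covariance only makes $\hat{F}_{0,m}$ invariant under the stabilizer of $(0,m)$, which is $\Gamma_1(N/m)$, and the needed identity $\hat{F}_{0,dm} = \hat{F}_{0,m}$ for $d$ invertible mod $N/m$ does not follow from it. Your proposed resolution --- that this ``mirrors the upper-triangular ambiguity used to make the forward construction well-defined'' --- is circular: that ambiguity argument presupposes that the input $f_{(m)}$ is $\Gamma_0(N/m)$-invariant (i.e., fixed by the full upper-triangular subgroup, not just the unipotent one), which is exactly what must be proved here for the extracted component $\hat{F}_{0,m}$ of an arbitrary $F$ of type $\rho_M$. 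No matching of coset representatives can produce it, because the two transformation laws defining type $\rho_M$ are \emph{equivalent} to the covariance, and the covariance is compatible with $\hat{F}_{0,m}$ being an arbitrary $\Gamma_1(N/m)$-invariant function: for instance, with $N=5$, take a hauptmodul $t$ of $X_1(5)$ that is not $\Gamma_0(5)$-invariant, set $\hat{F}_{(0,1)\gamma}(\tau) := t(\gamma\tau)$ on the orbit of $(0,1)$ and $\hat{F}_{0,0} := 0$, and take the Fourier transform; the result satisfies both defining identities but does not arise from a $\Gamma_0$-invariant family.

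For comparison, the paper's own proof consists of exactly the $S$- and $T$-covariance computations and treats them as sufficient, so you have correctly isolated the point at which more than covariance is required; but your proposal does not close it, and the plan you sketch cannot. To obtain a complete argument you must either supply independent input forcing the extra diagonal symmetry $\hat{F}_{0,m} = \hat{F}_{0,dm}$ (invariance under the diamond action of $(\Z/(N/m)\Z)^\times$, which is not a consequence of the two transformation laws), or else prove the statement with $\Gamma_1(N/m)$-invariance in place of $\Gamma_0(N/m)$-invariance, which is what the covariance actually delivers; note that the later uses in the paper (the corollary on determination by the components $F_{0,k}$, and the forward direction applied to McKay--Thompson families) only require the covariance, not the full surjectivity statement.
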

\begin{proof}
The map is clearly linear, and by the previous lemma, it takes families $\{ f_{(m)} \}_{m|N}$ to vector-valued functions of type $\rho_M$.  Since the Fourier transform is invertible, the kernel of this map is trivial.  It suffices to show that the map is surjective.

Let $F$ be a vector-valued function of type $\rho_M$.  We apply the inverse Fourier transform to get a $\C[\Z/N\Z \times \Z/N\Z]$-valued function $\hat{F}$ on $\HH$:
\[ \hat{F}_{i,j}(\tau) = \sum_{k \in \Z/N\Z} e(jk/N) F_{i,k}(\tau) \]
We wish to show that for each $m|N$, $\hat{F}_{0,m}(\tau)$ is invariant under $\Gamma_1(N/m)$ and that for $m = (i,j,N)$, $\hat{F}_{i,j}(\tau) = \hat{F}_{0,m}(\frac{a\tau+b}{c\tau + d})$ for all $a,b,c,d \in \Z$ satisfying $ad-bc = 1$, $c \equiv i/m$ (mod $N$), and $d \equiv j/m$ (mod $N$).  Since the two maps $\tau \mapsto \tau+1$ and $\tau \mapsto -1/\tau$ generate $SL_2(\Z)$, it suffices to show that $\hat{F}_{i,j}(\tau+1) = \hat{F}_{i,j+i}(\tau)$ and $\hat{F}_{i,j}(-1/\tau) = \hat{F}_{j,-i}(\tau)$.

\[ \begin{aligned}
\hat{F}_{i,j}(\tau+1) &= \sum_{k \in \Z/N\Z} e(jk/N) F_{i,k}(\tau + 1) \\
&= \sum_{k \in \Z/N\Z} e(jk/N) e(ik/N) F_{i,k}(\tau) \\
&= \sum_{k \in \Z/N\Z} e(k(j+i)/N) F_{i,k}(\tau) \\
&= \hat{F}_{i,j+i}(\tau)
\end{aligned} \]
so $\hat{F}$ behaves correctly under $T$.
\[ \begin{aligned}
\hat{F}_{i,j}(-1/\tau) &= \sum_{k \in \Z/N\Z} e(jk/N) F_{i,k}(-1/\tau) \\
&= \sum_{k \in \Z/N\Z} e(jk/N) \frac{1}{N} \sum_{m,n \in \Z/N\Z} e(\frac{-in-km}{N}) F_{m,n}(\tau) \\
&= \sum_{n \in \Z/N\Z} e(-in/N) \frac{1}{N} \sum_{m \in \Z/N\Z} F_{m,n}(\tau) \sum_{k \in \Z/N\Z} e(\frac{k(j-m)}{N}) \\
&= \sum_{n \in \Z/N\Z} e(-in/N) F_{j,n}(\tau) \\
&= \hat{F}_{j,-i}(\tau)
\end{aligned} \]
so $\hat{F}$ behaves correctly under $S$.

To conclude, we note that:
\begin{enumerate}
\item For all $m$, $\hat{F}_{0,m}$ is invariant under $T$, since $\hat{F}_{0,m} = \hat{F}_{0,m+0}$.
\item For all $m|N$, $\hat{F}_{0,m}$ is invariant under $\Gamma(N/m)$, since $\hat{F}_{0,m}(\frac{(aN/m + 1)\tau + (bN/m)}{(cN/m)\tau + (dN/m + 1)}) = \hat{F}_{0,m}(\tau)$ for all $a,b,c,d \in \Z$ such that $\begin{pmatrix} aN/m + 1 & bN/m \\ cN/m & dN/m + 1 \end{pmatrix}$ has determinant $1$.
\end{enumerate}
Since $\Gamma_1(N/m)$ is generated by $\Gamma(N/m)$ and $T$, $\hat{F}_{0,m}$ is invariant under $\Gamma_1(N/m)$.
\end{proof}

\begin{cor} \label{cor:restrict} 
A vector-valued function $F$ of type $\rho_M$ is uniquely determined by its restriction to components $F_{0,i}$ for $i \in \Z/N\Z$.  In particular, if these components are zero, then $F$ is zero.
\end{cor}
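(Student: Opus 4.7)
The plan is to reduce the statement to Proposition \ref{prop:vectisom}, which identifies vector-valued functions of type $\rho_M$ with families $\{f_{(m)}\}_{m \mid N}$ of $\Gamma_0(N/m)$-invariant modular functions via the Fourier-transform construction. Since that identification is a linear bijection, it suffices to show that the single row $\{F_{0,i}\}_{i \in \Z/N\Z}$ already determines the whole family $\{f_{(m)}\}_{m \mid N}$; the "in particular" clause will then follow from linearity of the map in both directions.

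First I would apply the inverse Fourier transform on the $k$-index to recover $\hat{F}_{0,j}(\tau) = \sum_{k \in \Z/N\Z} e(jk/N)\, F_{0,k}(\tau)$, so that knowing the components $F_{0,k}$ is equivalent to knowing the components $\hat{F}_{0,j}$. Next I would extract each $f_{(m)}$ from the $\hat{F}_{0,j}$ using the defining formula $\hat{F}_{i,j}(\tau) = f_{((i,j,N))}(A\tau)$, with $A$ having bottom row $(i/(i,j,N),\, j/(i,j,N))$. Specializing to $i = 0$ and $j = m$ for each $m \mid N$ gives $(i,j,N) = m$ and $j/(i,j,N) = 1$, so $A$ is upper triangular with $1$'s on the diagonal; thus $A\tau$ is an integer translate of $\tau$, and since $f_{(m)}$ has period one, $\hat{F}_{0,m}(\tau) = f_{(m)}(\tau)$. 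This reads off each member of the family directly from the restricted components.

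Combining these steps, Proposition \ref{prop:vectisom} then recovers all of $F$ from $\{f_{(m)}\}_{m \mid N}$; and if every $F_{0,i}$ vanishes, then every $f_{(m)}$ vanishes, so $F = 0$ by the same proposition. The only step requiring any vigilance is verifying that the matrix $A$ actually trivializes at $(i,j) = (0,m)$ so that $\hat{F}_{0,m}$ equals $f_{(m)}$ on the nose rather than some nontrivial modular transform of it; this is immediate from the divisibility computation above, so I do not expect a real obstacle.
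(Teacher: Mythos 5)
Your argument is correct and takes essentially the same route as the paper's proof: both recover the row $\hat F_{0,j}$ from the components $F_{0,k}$ by Fourier inversion and then reconstruct all of $F$ via the isomorphism of Proposition \ref{prop:vectisom}. Your explicit verification that $A$ is a translation at $(i,j)=(0,m)$, so that $\hat F_{0,m}=f_{(m)}$ on the nose, is just a spelled-out version of what the paper's surjectivity argument already contains.
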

\begin{proof}
$F$ is uniquely determined by its inverse Fourier transform $\hat{F}$.  $\hat{F}$ is uniquely determined by its restriction to the components $\hat{F}_{0,m}$ for $0 \leq m < N$, and these components are uniquely determined by their Fourier transforms $F_{0,i}$.
\end{proof}

\subsubsection{Moduli interpretation}

Modular functions (or forms) can be viewed as functions (or sections of line bundles) on moduli spaces of structured elliptic curves.  We can think of the functions $\hat{F}$ in a similar way, except unlike the usual cases, this moduli space is not in general connected.  Since each $\hat{F}$ is determined by a set of functions $\{ f_{(m)} \}_{m|N}$ where $f_{(m)} : \HH \to \C$ is invariant under $\Gamma_1(N/m)$, $\hat{F}$ is a function on the disjoint union of modular curves $\coprod_{m|N} Y_1(N/m)$.  Each curve $Y_1(N/m)$ is the coarse space for the moduli problem classifying elliptic curves equipped with a point of order exactly $N/m$, i.e., as an analytic object, $Y_1(N/m)$  receives a universal map from the fibered category over complex analytic spaces whose objects are diagrams $\xymatrix{ E \ar[r] & S \ar@<0.5ex>[l]^e \ar@<-0.5ex>[l]_p }$ of complex analytic spaces, with $E \to S$ smooth and proper with one dimensional genus one fibers, and $e,p: S \to E$ sections, such that $p$ has order $N/m$ under the $S$-group law induced by $e$, and whose morphisms are fibered diagrams.  Using this interpretation, we can think of $\hat{F}$ as a function on the space of elliptic curves $E \overset{e}{\leftrightarrows} S$ equipped with a point $p: S \to E$, such that $[N]p = e$.

We have been viewing the component functions of $\hat{F}$ as functions on the complex upper half-plane, so it will be convenient to consider the above disjoint union of curves as the quotient of a disjoint union of half-planes by an action of $SL_2(\Z)$, in a way that specializes to the classical uniformization of $Y(1)$ when $N=1$.  We begin by recalling two geometric objects attached to an arbitrary finite group $G$, introduced in Section 2 of \cite{C10}:
\begin{enumerate}
\item Consider the category whose objects are diagrams $\xymatrix{ P \ar[r]^a & E \ar[r]^\pi & S \ar@/^/[ll]^{\tilde{e}} }$ of complex analytic spaces equipped with sheaf isomorphisms $\psi: R^1\pi_*\underline{\Z} \to \underline{\Z \times \Z}$, such that:
\begin{itemize}
\item $a: P \to E$ is a $G$-torsor,
\item $\pi: E \to S$ is a smooth proper map whose fibers are genus one curves,
\item $\tilde{e}: S \to P$ is a section of $\pi \circ a$,
\item The exterior square $\wedge^2 \psi: \wedge^2 R^1\pi_* \underline{\Z}(1) \overset{\cup}{\to} R^2 \pi_* \underline{\Z}(1) \to \underline{\Z}$ is the negative of the canonical isomorphism,
\end{itemize}
and whose morphisms are fibered diagrams, where the maps $P \to P'$ over $E \to E'$ are $G$-equivariant, equipped with the canonical isomorphisms of sheaves under pullback.  This stack is represented by the analytic space $\Hom(\Z \times \Z, G) \times \HH$.  By dualizing $\phi$, we find that the connected component attached to a fixed homomorphism $\phi: \Z \times \Z \to G$ parametrizes complex analytic families of elliptic curves $E$ with $G$-torsors $P$ and a preferred basis of fiberwise $H_1(E) \cong \Z \times \Z$, such that the monodromy of $P$ along a basis element in $H_1$ from the base point is described by applying $\phi$.  When $N=1$, we have $P=E$, and the above category reduces to the moduli description of the upper half-plane given in \cite{D69} and \cite{C?}.
\item Consider the category whose objects are diagrams $P \overset{a}{\to} E \overset{e}{\underset{\pi}{\leftrightarrows}} S$ of complex analytic spaces, such that:
\begin{itemize}
\item $a: P \to E$ is a $G$-torsor,
\item $\pi: E \to S$ is a smooth proper map whose fibers are genus one curves,
\item $e: S \to E$ is a section of $\pi$,
\end{itemize}
and whose morphisms are fibered diagrams, where the maps $P \to P'$ over $E \to E'$ are $G$-equivariant.  This is commonly called the complex analytic stack of elliptic curves with $G$-torsors, and we will denote it by $\M^G_{Ell}$.
\end{enumerate}
The second stack is the quotient of the first by a canonical action of $G \times SL_2(\Z)$, where elements of $G$ translate among lifts $\tilde{e}$ of $e$, and elements of $SL_2(\Z)$ transform the isomorphism $\psi$.  In terms of coordinates, the points of the first space $\Hom(\Z \times \Z, G) \times \HH$ are given by triples $(g,h,\tau)$, where $g,h \in G$ commute, and $\tau \in \HH$.  The action of $G$ identifies $(g,h, \tau)$ with the simultaneous conjugates $(kgk^{-1}, khk^{-1}, \tau)$ as $k$ ranges over elements of $G$, since changes of base point conjugate the monodromy of a torsor.  The action of $SL_2(\Z)$ identifies $(g^ah^c, g^bh^d, \tau)$ with $(g,h,\frac{a\tau+b}{c\tau+d})$ for all $\binom{ab}{cd} \in SL_2(\Z)$.  We can view a function on $\M^G_{Ell}$ as a function $f(g,h,\tau)$ on $\Hom(\Z \times \Z, G) \times \HH$ that is invariant under this action of $G \times SL_2(\Z)$.

We specialize this picture to the case $G = \Z/N\Z$.  Functions on $\M^{\Z/N\Z}_{Ell}$ are functions on $\Hom(\Z \times \Z, \Z/N\Z) \times \HH$ that are invariant under the action of $\Z/N\Z \times SL_2(\Z)$ described in the previous paragraph.  In contrast to the general case, the action of $\Z/N\Z$ is trivial, since it is given by conjugation in an abelian group.  We then have an identification of the coarse moduli space with a disjoint union of analytic quotients of the upper half plane, and by the analysis of $SL_2(\Z)$-orbits in $\Z/N\Z \times \Z/N\Z$ in Lemma \ref{lem:basic}, it is a disjoint union of quotients by $\Gamma_1(N/m)$ as $m$ ranges over divisors of $N$.  In particular, this identification with $\coprod Y_1(N/m)$ allows us to make an identification of function spaces.  We summarize the situation as follows:

\begin{lem} \label{lem:isoms}
For any positive integer $N$, we have linear isomorphisms between the spaces:
\begin{enumerate}
\item Families $\{ f_{(m)} \}_{m|N}$ of modular functions, where each $f_{(m)}$ is invariant under $\Gamma_1(N/m)$.
\item Functions $f$ on $\M^{\Z/N\Z}_{Ell}$.
\item Functions $\hat{F}_{i,j}(\tau)$ on $\Z/N\Z \times \Z/N\Z \times \HH$ that obey the relation $\hat{F}_{i,j}(\frac{a\tau+b}{c\tau+d}) = \hat{F}_{ai+cj,bi+dj}(\tau)$ for $\binom{ab}{cd} \in SL_2(\Z)$.
\item Vector-valued functions of type $\rho_M$ for $M = I\! I_{1,1}(N) \times I\! I_{1,1}$.  
\end{enumerate}
\end{lem}
\begin{proof}
The isomorphism $3 \leftrightarrow 1$ is given by restriction: $f_{(m)} = \hat{F}_{0,m}$.  The fact that this is an isomorphism is a consequence of Lemma \ref{lem:basic}.

The bijection $2 \leftrightarrow 3$ is due to the identification of the coarse moduli space of $\M^{\Z/N\Z}_{Ell}$ as an analytic quotient of $\Z/N\Z \times \Z/N\Z \times \HH$ by the action of $SL_2(\Z)$ given above.

The correspondence between $4 \leftrightarrow 1$ was proved in Proposition \ref{prop:vectisom}.
\end{proof}

\subsubsection{Rationality}

Let $f$ be a function on $\M_{Ell}^{\Z/N\Z}$, and let $F$ be the corresponding vector-valued form.  We give sufficient conditions for $F$ to have rational coefficients.

\begin{lem} \label{lem:mcgraw1} 
If the Fourier coefficients $c_{0,k}(n) \in \Q$ for all $k \in \Z/N\Z$, and $n \in \Q$, then all coefficients of $F$ are rational.  The analogous statement holds if we replace $\Q$ with any subring of $\C$ containing $\Q$.
\end{lem}
\begin{proof}
From the main theorem of \cite{M03}, the space of vector-valued functions of type $\rho_M$ has a basis consisting of functions all of whose Fourier expansions have only integer coefficients.  If we choose such a basis $\{ f^i \}$, then there exist rational numbers $a_i$ such that $F_{0,k} = \sum_i a_i f^i_{0,k}$ for all $k$.  By corollary \ref{cor:restrict}, $F$ is uniquely determined by its restriction to $F_{0,k}$, so $F = \sum_i a_i f^i$.

This argument works for any ring containing $\Q$, since we may allow the elements $a_i$ to lie in the larger ring.
\end{proof}

We define $\hat{c}_{i,j}(n)$ to be the $q^n$ coefficient of $f(g^i,g^j,\tau)$, so its $q$-expansion is $\sum \hat{c}_{i,j}(n) q^n$.  We define $c_{i,k}(n) = \frac{1}{N} \sum_{j\in \Z/N\Z} e(-jk/N) \hat{c}_{i,j}(n)$, so these are the coefficients of the vector-valued function $F$.

\begin{lem} \label{lem:mcgraw2} 
If $c_{i,k}(n) \in \Q$ for all $i,k \in \Z/N\Z$, $n<0$, and $c_{0,k}(0) \in \Q$ for all $k$, then all coefficients of $F$ are rational.  The analogous statement holds if we replace $\Q$ with any subring of $\C$ containing $\Q$.
\end{lem}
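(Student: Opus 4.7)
The plan is to reduce to Lemma \ref{lem:mcgraw1} by showing that $F$ is determined, inside a suitable finite-dimensional ambient space, by the data given in the hypothesis---namely its principal part together with the finitely many constants $c_{0,k}(0)$. Since $F$ is a vector-valued function of type $\rho_M$ with finite pole order, it lies in a finite-dimensional space $V$ of such functions with pole order bounded by that of $F$. By the same appeal to \cite{M03} used in Lemma \ref{lem:mcgraw1}, $V$ carries a $\Q$-structure $V_\Q$ spanned by functions with integer Fourier coefficients, and it suffices to show $F \in V_\Q$.

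I would introduce the $\C$-linear evaluation map
\[ \pi\colon V \longrightarrow W, \qquad G \longmapsto \bigl(\,(c^G_{i,k}(n))_{i,k\in\Z/N\Z,\,n<0},\ (c^G_{0,k}(0))_{k\in\Z/N\Z}\,\bigr), \]
where $W$ is a finite-dimensional coordinate space with its evident $\Q$-structure $W_\Q$. The hypothesis places $\pi(F) \in W_\Q$, while by construction $\pi(V_\Q) \subseteq W_\Q$. Once $\pi$ is shown to be injective, it identifies $V_\Q$ with $\pi^{-1}(W_\Q \cap \pi(V))$, so $\pi(F) \in W_\Q$ forces $F \in V_\Q$, and every Fourier coefficient of $F$ is then rational. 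The same argument goes through verbatim with $\Q$ replaced by any subring of $\C$ containing $\Q$.

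The crux, and the main obstacle, is the injectivity of $\pi$. Suppose $\pi(G) = 0$, so $G$ is holomorphic at $i\infty$. Via Proposition \ref{prop:vectisom}, each $\hat{G}_{0,m}$ is a weight-zero modular function invariant under $\Gamma_0(N/m)$, and applying elements of $SL_2(\Z)$ permutes the components of $\hat{G}$, so holomorphicity of every component of $G$ at $i\infty$ translates into holomorphicity of $\hat{G}_{0,m}$ at every cusp of $X_0(N/m)$. A weight-zero modular function holomorphic on a compact modular curve is constant, so each $\hat{G}_{0,m}$ is a constant; Fourier inverting yields $G_{0,k} = \tfrac{1}{N}\sum_m e(-mk/N)\hat{G}_{0,m} \equiv c^G_{0,k}(0)$. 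The vanishing of the second block of $\pi(G)$ then kills these constants, so $G_{0,k} \equiv 0$ for every $k$, and Corollary \ref{cor:restrict} forces $G = 0$. This rigidity---a holomorphic weight-zero vector-valued form being recovered from its $(0,k)$-constant terms---is the only genuinely new input beyond the machinery already assembled for Lemma \ref{lem:mcgraw1}.
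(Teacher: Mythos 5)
Your proposal is correct and is essentially the paper's own argument, just repackaged: the paper also invokes \cite{M03} to get an integral basis and then shows that a type-$\rho_M$ function whose components are all regular at $i\infty$ and whose $(0,k)$ constant terms vanish must be zero (via the inverse Fourier transform, constancy of holomorphic weight-zero modular functions, and Corollary \ref{cor:restrict}), which is exactly your injectivity of $\pi$. The only difference is presentational—you phrase the reduction as a $\Q$-structure/evaluation-map argument, while the paper subtracts a rational combination $\sum_i a_i f^i$ and shows the difference lies in a zero space.
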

\begin{proof}
By the main theorem of \cite{M03}, there exists a basis $\{ f^i \}$ for the space of vector-valued functions of type $\rho_M$ with integer coefficients.  Let $V$ be the space of vector-valued functions of type $\rho_M$ such that the expansions of all components are regular, and the constant terms of the $(0,k)$ components are zero.  By our hypotheses, there exists a set $\{ a_i \}$ of rational numbers such that $F - \sum_i a_i f^i$ lies in $V$.  The inverse Fourier transform gives a bijection between $V$ and the space of families $\{ f_{(m)} \}_{m|N}$ of modular functions that are regular at all cusps, and whose expansions at infinity have vanishing constant term.  The regularity implies these modular functions are constants, and the vanishing of the constant term implies these functions are zero.  $V$ is therefore composed of the constant function zero, and $F = \sum_i a_i f^i$.

As in the previous lemma, this argument works for larger rings, since we can choose $a_i$ appropriately.
\end{proof}

\subsection{Hecke-monic modular functions} \label{sec:hecke-monic}

Recall the following definitions from \cite[Sections 1,2]{C10}.
\begin{enumerate}
\item Given a finite group $G$, the equivariant Hecke operator $nT_n$ is defined for functions on $\M_{Ell}^G$ by summing over pullbacks of $G$-torsors over degree $n$ isogenies.  When $G$ is cyclic of order $N$ and generated by $g$, we can write a formula for $nT_n$ in terms of functions on the complex upper half-plane:
\[ nT_nf(g^i,g^j,\tau) = \underset{0 \leq b < d}{\sum_{ad=n}} f(g^{di}, g^{aj-bi}, \frac{a\tau+b}{d}) \]
\item A function $f$ on $\M_{Ell}^{\Z/N\Z}$ is Hecke-monic if and only if for all $n > 0$ and any connected component $X \subset \M_{Ell}^{\Z/N\Z}$ there exists a monic polynomial $\Phi_{X,n} \in \mathbb{C}[x]$ of degree $n$ such that the restriction of $nT_nf$ to $X$ is equal to the restriction of $\Phi_{X,n}(f)$ to $X$.
\end{enumerate}

\begin{defn}
A function on $\M_{Ell}^{\Z/N\Z}$ is principally normalized if $f(1,g^i,\tau)$ has a $q$-expansion of the form $q^{-1} + O(q)$ for all $i \in \Z/N\Z$.
\end{defn}

\begin{lem}
If $f$ is a principally normalized Hecke-monic function on $\M_{Ell}^{\Z/N\Z}$, then for each $m|N$, $f(1,g^m,\tau)$ is a normalized Hauptmodul invariant under $\Gamma_0(N/m)$.
\end{lem}
\begin{proof}
Since $f$ is Hecke-monic, and $f(1,g^k,\tau)$ has $q$-expansion of the form $q^{-1} + O(q)$ for all $k$, Corollary 3.6 in \cite{C10} implies each $f(1,g^k,\tau)$ is a normalized Hauptmodul, i.e., each $f(1,g^k,\tau)$ has the $q$-expansion of the required form, is invariant under a genus zero group of M\"obius transformations, and generates the function field of the upper half-plane quotient.

Let $\binom{ab}{cd} \in \Gamma_0(N/m)$.  Then $f(1,g^m,\frac{a\tau+b}{c\tau+d}) = f(1^a g^{cm}, 1^b g^{dm}, \tau) = f(1,g^{dm},\tau)$.  We assumed $f$ was principally normalized, so the two functions $f(1,g^m,\frac{a\tau+b}{c\tau+d})$ and $f(1,g^m,\tau)$ both have $q$-expansions of the form $q^{-1}+O(q)$.  By the Hauptmodul property of $f(1,g^m,\tau)$, the expansions must be equal.
\end{proof}

\begin{lem} \label{lem:logp} 
Let $f$ be a principally normalized Hecke-monic function on $\M_{Ell}^{\Z/N\Z}$.  Then, \[ \log p(f(1,g,\sigma)-f(1,g,\tau)) = - \sum_{m>0} T_m f(1,g,\tau) p^m \]
in a neighborhood of $p=q=0$, where $p=e(\sigma)$ and $q=e(\tau)$.
\end{lem}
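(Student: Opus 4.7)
The plan is to match power series coefficients in $p$ on both sides by identifying the Hecke-monic values of $f$ with Faber polynomials of the single-variable function $\phi(p) := f(1, g, \sigma)$.

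Write $p = e(\sigma)$, $q = e(\tau)$, and $y := f(1, g, \tau)$. Principal normalization gives $\phi(p) = p^{-1} + \sum_{i \geq 1} a_i p^i$, so $p(\phi(p) - y) = 1 - yp + \sum_{i \geq 1} a_i p^{i+1}$ is a unit in $\C[y][[p]]$. Expanding $\log(1 + X)$ then defines polynomials $Q_m(y) \in \C[y]$ through
\[
\log(p(\phi(p) - y)) = -\sum_{m \geq 1} \frac{Q_m(y)}{m} p^m;
\]
the $y^m p^m$ contribution comes only from the $-X^m/m$ summand, so each $Q_m(y)$ is monic of degree $m$. I would then verify $Q_m$ is the $m$-th Faber polynomial of $\phi$, meaning $Q_m(\phi(q)) = q^{-m} + O(q)$. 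Substituting $y = \phi(q)$ and exploiting the simple-pole structure of $\phi$ at both cusps yields the factorization
\[
p(\phi(p) - \phi(q)) = (1 - p/q)\bigl(1 - pq B(p, q)\bigr)
\]
for some $B \in \C[[p, q]]$; taking logs gives $-\sum_{m \geq 1} q^{-m} p^m/m$ plus a series in $pq\C[[p, q]]$, and comparing coefficients of $p^m$ forces $Q_m(\phi(q)) = q^{-m} + O(q)$.

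The remaining step is to show $m T_m f(1, g, \tau) = Q_m(y)$. The Hecke-monic hypothesis already asserts that $m T_m f(1, g, \tau)$ is a monic polynomial of degree $m$ in $y$; combined with $\phi(q)^k = q^{-k} + O(q^{-k+2})$, a triangular argument shows that such a monic polynomial is uniquely determined by requiring its evaluation at $\phi(q)$ to have the form $q^{-m} + O(q)$. It therefore suffices to verify this $q$-expansion property for $m T_m f$. Using the explicit formula
\[
m T_m f(1, g, \tau) = \underset{0 \leq b < d}{\sum_{ad = m}} f(1, g^a, (a\tau + b)/d),
\]
principal normalization of each $f(1, g^a, \cdot)$, and orthogonality of the additive characters of $\Z/d\Z$, the sum over $b$ annihilates both the polar and the (vanishing) constant parts of $f(1, g^a, \cdot)$ whenever $d > 1$; the only $(a, d, b)$ contributing a non-positive power of $q$ is $(m, 1, 0)$, yielding $f(1, g^m, m\tau) = q^{-m} + O(q^m)$. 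Substituting $y = f(1, g, \tau)$ into the Faber expansion and using $T_m f = Q_m(f)/m$ then gives the claimed identity.

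The main obstacle is the last $q$-expansion calculation. The individual pieces are elementary, but one must verify that principal normalization is required on every component of $\M_{Ell}^{\Z/N\Z}$, not merely the $(1, g, \cdot)$ component: a stray constant term in some $f(1, g^a, \cdot)$ would survive the $b$-sum for that value of $a$ and contribute an unwanted constant to $m T_m f(1, g, \tau)$, breaking the clean $q^{-m} + O(q)$ form needed to identify $m T_m f$ with the Faber polynomial.
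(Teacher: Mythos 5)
Your proof is correct and takes essentially the same route as the paper: both sides are identified with the (suitably normalized) unique monic degree-$m$ polynomial in $f(1,g,\tau)$ whose $q$-expansion is $q^{-m}+O(q)$, the paper simply citing Norton's bivarial transform \cite{N84} and Lemma 15 of \cite{C08} for the two ingredients that you verify directly via the factorization $(1-p/q)(1-pqB)$ and the character-orthogonality computation of $mT_mf$. The ``obstacle'' you flag at the end is not one: the paper's definition of principally normalized already requires $f(1,g^i,\tau)=q^{-1}+O(q)$ for every $i$, which is exactly the input your computation of the $d>1$ terms needs.
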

\begin{proof}
Following \cite{N84}, we define numbers $H_{m,n}$ by the bivarial transform (see the next section for details): the power series $q^{-n} + n\sum_{m} H_{m,n}q^m$ satisfies two properties:
\begin{enumerate}
\item it is $n$ times the coefficient of $p^n$ in $-\log p(f(1,g,\sigma)-f(1,g,\tau))$
\item it is the unique polynomial in $f(1,g,\tau)$ whose expansion has the form $q^{-n} + O(q)$.
\end{enumerate}
By \cite{C10} Lemma 5.2, $mT_m f(1,g,\tau)$ is also the unique polynomial in $f(1,g,\tau)$ whose expansion has the form $q^{-n} + O(q)$.  Therefore, the expansion of $\log p(f(1,g,\sigma)-f(1,g,\tau))$ as a series in $p$ has coefficients given by $-T_m f(1,g,\tau)$.

By the genus zero property, $f(1,g,\tau)$ is injective in a nontrivial neighborhood of zero in the $q$-disc, so $\log p(f(1,g,\sigma)-f(1,g,\tau))$ is the sum of $\log (1-pq^{-1})$ and a power series that converges absolutely in a nontrivial neighborhood of $p=q=0$ in the unit polydisc.  Therefore, the product also converges on the complement of the locus $pq=0$ in this neighborhood, and the formal equality extends to a nontrivial domain.
\end{proof}


\begin{lem} \label{lem:prod1} 
Let $f$ be a principally normalized Hecke-monic function on $\M_{Ell}^{\Z/N\Z}$, and let $F$ be the corresponding vector-valued form with coefficients $c_{i,k}(n)$, as defined just before Lemma \ref{lem:mcgraw2}.  Then $f(1,g,\sigma)-f(1,g,\tau)$ has an infinite product expansion of the form:
\[ p^{-1} \prod_{m>0,n \in \Z} \prod_{j \in \Z/N\Z} (1 - e(j/N)p^m q^n)^{c_{0,j}(mn)} \]
in a neighborhood of the cusp $(i\infty,i\infty)$ of $\HH \times \HH$.
\end{lem}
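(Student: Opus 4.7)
My plan is to verify the identity after taking logarithms and matching coefficients of $p$. By Lemma \ref{lem:logp},
\[ \log\bigl(p(f(1,g,\sigma)-f(1,g,\tau))\bigr) = -\sum_{M>0} T_M f(1,g,\tau) \, p^M \]
in a neighborhood of $p=q=0$. So it suffices to expand the logarithm of the proposed product as a power series in $p$ and show that the coefficient of $p^M$ equals $-T_M f(1,g,\tau)$ for every $M\geq 1$.

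Applying $\log(1-x) = -\sum_{k\geq 1} x^k/k$ formally to each factor and collecting terms with $p^M$ (so $mk = M$ where $m$ is the exponent on $p$ and $k$ is the index of the expansion), I would obtain
\[ \log \left(\prod_{m>0,n\in\Z}\prod_{j\in\Z/N\Z}(1-e(j/N)p^m q^n)^{c_{0,j}(mn)}\right) = -\sum_{M>0} \frac{p^M}{M}\sum_{mk=M} k \sum_{n\in\Z}\sum_{j\in\Z/N\Z} c_{0,j}(mn)\, e(jk/N)\, q^{nk}. \]
The key Fourier-theoretic step is that by the inverse Fourier transform defining $\hat{F}$ from $F$, one has $\sum_{j\in\Z/N\Z} c_{0,j}(mn)\, e(jk/N) = \hat{c}_{0,k}(mn)$, so the coefficient of $-p^M$ becomes
\[ \frac{1}{M}\sum_{mk=M} k \sum_{n\in\Z} \hat{c}_{0,k}(mn) q^{nk}. \]

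To identify this with $T_M f(1,g,\tau)$, I would unwind the definition of the equivariant Hecke operator with $(i,j)=(0,1)$:
\[ M T_M f(1,g,\tau) = \sum_{ad=M} \sum_{0\leq b<d} f(1,g^a, \tfrac{a\tau+b}{d}). \]
Substituting the $q$-expansion $f(1,g^a,\tau) = \sum_n \hat{c}_{0,a}(n) q^n$ and performing the inner sum over $b$ with the orthogonality relation $\sum_{0\leq b<d} e(nb/d) = d\cdot \mathbf{1}_{d\mid n}$, the terms with $d\nmid n$ kill each other, and after relabelling $(a,d,n/d)\mapsto (k,m,n)$ one recovers precisely $\sum_{mk=M} m\sum_n \hat{c}_{0,k}(mn) q^{nk}$. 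Dividing by $M=mk$ gives the coefficient expression above, completing the matching.

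The main obstacle, beyond keeping the bookkeeping of Fourier inversion and the Hecke sum straight, is justifying convergence of the infinite product and the validity of taking its logarithm. I would dispose of this by restricting to a polydisc around $(i\infty, i\infty)$ on which $|p|, |q|$ are small enough that the pole orders of the components $F_{0,j}$ at infinity (bounded uniformly in $j$) together with subexponential growth of the Fourier coefficients $c_{0,j}(mn)$ in $mn$ make the product absolutely convergent to a nonzero value. On such a polydisc the logarithm is single-valued and the formal rearrangement above becomes a genuine equality of convergent Laurent series in $p$ and $q$, which is what the lemma asserts.
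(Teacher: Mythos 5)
Your proof follows essentially the same route as the paper: multiply by $p$, take logarithms, invoke Lemma \ref{lem:logp}, expand, Fourier-invert $\sum_j c_{0,j}(mn)e(jk/N)=\hat{c}_{0,k}(mn)$, and match against the Hecke operator sum. One slip to fix: in the coefficient of $p^M$ coming from the product, the factor should be $m$, not $k$, since the $k$-th term of $\log(1-x)=-\sum_{k\geq 1}x^k/k$ contributes $1/k = m/M$ when $mk=M$, and this $m$ is exactly what matches the $m$ you correctly obtain on the Hecke side. Your convergence justification via subexponential coefficient growth is a reasonable alternative to the paper's, which instead appeals to the Cummins--Gannon injectivity lemma to ensure $\log p\bigl(f(1,g,\sigma)-f(1,g,\tau)\bigr)$ is analytic, up to the single factor $\log(1-pq^{-1})$, on a nontrivial polydisc.
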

\begin{proof}
We multiply by $p$ and take the logarithm:
\[ \begin{aligned}
\sum_{m>0,n \in \Z} & \sum_{j \in \Z/N\Z} c_{0,j}(mn) \log(1 - e(j/N)p^m q^n) \\
&= \sum_{m>0,n \in \Z} \sum_{j \in \Z/N\Z} c_{0,j}(mn) \sum_{i=1}^\infty \frac{-1}{i} e(ij/N) p^{mi} q^{ni} \\
&= - \sum_{m>0,n \in \Z} \sum_{0<a|(m,n)} \frac{1}{a} \sum_{r \in \frac{1}{N}\Z/\Z} c_{0,Nr}(mn/a^2) e(ar)  p^m q^n \\
&= - \sum_{m>0} p^m \sum_{ad=m} \frac{1}{a} \sum_{n \in \Z} \hat{c}_{0,a}(dn) q^{an} \\
&= - \sum_{m>0} p^m \sum_{ad=m} \frac{1}{a} \sum_{0 \leq b < d} \frac{e(b/d)}{d} \sum_{n \in \Z} \hat{c}_{0,a}(n) q^{an/d} \\
&=  - \sum_{m>0} p^m \frac{1}{m} \underset{0 \leq b < d}{\sum_{ad=m}} \sum_{n \in \Z} \hat{c}_{0,a}(n)e(b/d) q^{an/d} \\
&= - \sum_{m>0} p^m \frac{1}{m} \underset{0 \leq b < d}{\sum_{ad=m}} \hat{F}_{0,a}(\frac{a\tau + b}{d}) \\
&= - \sum_{m>0} T_mf(1,g,\tau) p^m
\end{aligned} \]

By Lemma \ref{lem:logp}, this is equal to $\log p(f(1,g,\sigma)-f(1,g,\tau))$, and both converge on a nontrivial neighborhood of $p=q=0$ in the unit polydisc.  Therefore, the product is equal to $f(1,g,\sigma)-f(1,g,\tau)$ on a nontrivial neighborhood of $p=q=0$.
\end{proof}


We now wish to consider a product expansion of $f(1,g,\sigma)-f(1,g,\tau)$ in a neighborhood of the cusp $(i\infty,0)$.  Here, we take ``expansion of $f(1,g,\tau)$ at zero'' to mean the $q$-expansion of $f(1,g,-1/\tau)$.  Since $f(1,g,-1/\tau) = f(g,1,\tau)$, we are really seeking an infinite product expansion of $f(1,g,\sigma)-f(g,1,\tau)$ at $(\infty,\infty)$.

\begin{prop} \label{prop:prod2} 
Let $f$ be a principally normalized Hecke-monic function on $\M_{Ell}^{\Z/N\Z}$, and let $F$ be the corresponding vector-valued function, with coefficients $c_{i,k}(n)$ as in Lemma \ref{lem:prod1}.  Then in a neighborhood of infinity, $f(1,g,\sigma) - f(g,1,\tau)$ has a product expansion:
\[ p^{-1} \prod_{m>0,n \in \frac{1}{N}\Z} (1 - p^m q^n)^{c_{m,Nn}(mn)} \]
\end{prop}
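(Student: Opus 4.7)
The plan is to adapt the proof of Lemma \ref{lem:prod1} to the modified setting, where the $\tau$-variable enters through $f(g,1,\tau) = f_{(1)}(-1/\tau)$ in place of $f(1,g,\tau) = f_{(1)}(\tau)$. First I will match the coefficient of $p^M$ in the logarithm of the candidate product with $T_M f(g,1,\tau)$ as a formal $q$-series, then I will establish an analog of Lemma \ref{lem:logp} identifying that coefficient with the corresponding coefficient of $\log p(f(1,g,\sigma) - f(g,1,\tau))$, and finally I will verify analytic convergence.

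For the matching step, I will expand $\log(1 - p^m q^n) = -\sum_{k \ge 1} p^{mk}q^{nk}/k$; setting $a=m$, $d=k$, and writing $n = \mu/N$ with $\mu \in \Z$, the $p^M$ coefficient of $\log\bigl[p^{-1}\prod (1 - p^m q^n)^{c_{m,Nn}(mn)}\bigr]$ becomes $-\sum_{ad=M} d^{-1} \sum_{\mu \in \Z} c_{a,\mu}(a\mu/N)\, q^{d\mu/N}$. I will compare this with the equivariant Hecke formula $MT_M f(g,1,\tau) = \sum_{ad=M,\, 0 \le b < d}\hat F_{d,-b}((a\tau+b)/d)$, using the expansion $\hat F_{d,-b}((a\tau+b)/d) = \sum_\nu \hat c_{d,-b}(\nu)\, e(\nu b/d)\, q^{\nu a/d}$ together with the Fourier inversion $\hat c_{d,-b}(\nu) = \sum_{k \in \Z/N\Z} e(-kb/N)\, c_{d,k}(\nu)$. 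On the support of $c_{d,k}(\nu)$ (where $\nu \in dk/N + \Z$), the inner sum $\sum_{b=0}^{d-1} e(b(\nu/d - k/N))$ collapses by orthogonality to $d \cdot \mathbf{1}[\nu \in dk/N + d\Z]$; parametrizing $\nu = dk/N + d\ell$ with $\ell \in \Z$ and changing variable to $\mu = k + N\ell$ (using mod-$N$ periodicity of the second index of $c$) then matches the two expressions exactly.

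For the analog of Lemma \ref{lem:logp}, I will exploit that the $p^M$ coefficient of $\log p(f(1,g,\sigma) - X)$ is a universal polynomial in the formal variable $X$, with coefficients determined only by the expansion $f(1,g,\sigma) = p^{-1} + O(p)$. Lemma \ref{lem:logp} identifies this polynomial, specialized at $X = f(1,g,\tau)$, with $-T_M f(1,g,\tau) = -p_M(f(1,g,\tau))/M$, where $p_M$ is the monic degree-$M$ polynomial encoding the Hecke-monic property on the component $Y_0(N) \subset \M^{\Z/N\Z}_{Ell}$. Because $(1,g)$ and $(g,1)$ lie on this same component, the equality $MT_M f_{(1)} = p_M(f_{(1)})$ pulls back along both lifts, so $MT_M f(g,1,\tau) = p_M(f(g,1,\tau))$ as well, and specialization at $X = f(g,1,\tau)$ produces $-T_M f(g,1,\tau)$. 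Combined with the matching step this yields the desired identity as formal power series, and convergence near $(i\infty, i\infty)$ then follows as in Lemma \ref{lem:prod1}: $f(1,g,\sigma)$ is injective near $\sigma = i\infty$ by \cite{CG97} Lemma 2.2 and $f(g,1,\tau)$ is locally injective at $\tau = i\infty$ (with leading $q^{-1/N}$ behavior coming from the pole of $f_{(1)}$ at its cusp $0$), so $\log p(f(1,g,\sigma) - f(g,1,\tau))$ separates into a polar logarithmic contribution and an absolutely convergent double series. The main obstacle will be the index-tracking in the matching step, particularly handling the fractional Fourier exponents of $\hat F_{d,-b}$ at its various cusps and verifying that the orthogonality argument isolates exactly the contributing terms.
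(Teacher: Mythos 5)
The formal core of your proposal is correct and is essentially the paper's own argument: your orthogonality/Fourier-inversion computation matching the $p^M$-coefficient of the logarithm of the product with $-T_Mf(g,1,\tau)$ is the same chain of equalities that appears in the paper (run in the opposite direction), and your identification of the $p^M$-coefficient of $\log p(f(1,g,\sigma)-X)$ with $-\tfrac{1}{M}$ times the Hecke-monic polynomial, evaluated at $X=f(g,1,\tau)$ using the fact that $(1,g)$ and $(g,1)$ lie on the same component of $\M_{Ell}^{\Z/N\Z}$, is a correct and slightly more explicit version of the paper's appeal to Lemma \ref{lem:logp} together with the equivariance $T_mf(g,1,\tau)=T_mf(1,g,-1/\tau)$.

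The gap is in your convergence step. You assert that $f(g,1,\tau)=f(1,g,-1/\tau)$ has ``leading $q^{-1/N}$ behaviour coming from the pole of $f_{(1)}$ at its cusp $0$'' and is locally injective at $\tau=i\infty$. But $f_{(1)}$ need not have a pole at $0$: for the non-Fricke completely replicable functions (for instance the McKay--Thompson series of class 2B), $f(1,g,\cdot)$ is regular at $0$, so $f(g,1,\tau)$ is regular at $i\infty$, there is no polar logarithmic term at all, and your argument as written does not apply. This is precisely why the paper's proof splits into two cases: when $f(1,g,\cdot)$ is regular at zero one argues instead that $f(1,g,\sigma)\to\infty$ as $\sigma\to i\infty$ while $f(g,1,\tau)$ stays bounded, so $f(1,g,\sigma)-f(g,1,\tau)$ is nonvanishing on a neighborhood of the cusp and the logarithm is single-valued, holomorphic, and has a convergent expansion there. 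Moreover, even in the polar case your injectivity claim needs justification (a pole of order greater than one in the local variable $q^{1/N}$ would destroy injectivity near the cusp); the paper avoids this by invoking Corollary 1 of \cite{C08} to produce an element $\tau\mapsto a-b/\tau$ fixing $f(1,g,\cdot)$, so that $f(1,g,-1/\tau)=f(1,g,a+b\tau)$, which reduces everything to the injectivity of $f(1,g,\cdot)$ at its own cusp $i\infty$ (the \cite{CG97} input already used in Lemma \ref{lem:prod1}) and yields the correct polar factor $\log(1-e(-a)pq^{-b})$. With this case distinction and that reduction, your outline becomes the paper's proof.
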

\begin{proof}

We multiply the product by $p$ and take the logarithm to get:

\[ \begin{aligned}
\sum_{m>0,n \in \frac{1}{N}\Z} & c_{m,Nn}(mn) \sum_{i > 0} \frac{-1}{i} p^{mi}q^{ni} \\
&= - \sum_{m > 0, n \in \frac{1}{N}\Z} \sum_{0<a|(m,Nn)} \frac{1}{a} c_{m/a,Nn/a}(mn/a^2) p^m q^n \\
&= - \sum_{m > 0} \sum_{ad=m} \frac{1}{a}\sum_{n \in \frac{1}{N}\Z} c_{d,Nn}(dn) p^m q^{an} \\
&= - \sum_{m > 0} \sum_{ad=m} \frac{1}{a} \sum_{0 \leq b < d} \frac{1}{d} \sum_{n \in \frac{1}{N}\Z} \underset{n \in dr+\Z}{\sum_{r \in \frac{1}{N}\Z/\Z}} e(-br) c_{d,Nr}(n) e(br) q^{an/d} p^m \\
&= - \sum_{m > 0} \frac{1}{m} \underset{0 \leq b < d}{\sum_{ad=m}} \sum_{n \in \frac{1}{N}\Z} \underset{n \in dr+\Z}{\sum_{r \in \frac{1}{N}\Z/\Z}} e(-br) c_{d,Nr}(n) e\left(n\frac{a\tau + b}{d}\right) p^m \\
&= -\sum_{m > 0} \frac{1}{m} \underset{0 \leq b < d}{\sum_{ad=m}} \sum_{n \in \frac{1}{N}\Z} \hat{c}_{d,-b}(n) e\left(n\frac{a\tau + b}{d}\right) p^m \\
&= -\sum_{m > 0} \frac{1}{m} \underset{0 \leq b < d}{\sum_{ad=m}} f\left(g^d, g^{-b},\frac{a\tau+b}{d}\right) p^m \\
&= -\sum_{m>0} T_mf(g,1,\tau) p^m \\
&= -\sum_{m>0} T_mf(1,g,-1/\tau) p^m
\end{aligned} \]

By Lemma \ref{lem:logp}, this sum agrees with the expansion of $\log p(f(1,g,\sigma) - f(1,g,\tau))$ at $(i\infty,0)$ as elements of the formal power series ring $\C((q^{1/N}))[[p]]$.  The function $p(f(1,g,\sigma) - f(1,g,\tau))$ has no zeroes or poles in a neighborhood of the line $\{ (\sigma, \tau) \in \HH^* \times \HH |\sigma = i\infty, \tau \in i\R_{>0} \}$, so $\log p(f(1,g,\sigma) - f(1,g,\tau))$ is single-valued and holomorphic in a neighborhood of the same line.  To show that the two cusp expansions at $(i\infty,i\infty)$ and $(i\infty,0)$ are analytic continuations of each other, it suffices to show that they converge on neighborhoods whose union contains this line.  We note that by a standard theorem of several complex variables (e.g., \cite[Theorem B.2]{G90}), the domain of convergence is always a complete Reinhardt domain with log-convex base.

Because the function $\tau \mapsto f(1,g,\tau)$ has a simple pole at infinity and is holomorphic on $\HH$, we can use the following injectivity property: for any $y \in \R_{>0}$, there exists $\sigma$ with sufficiently large imaginary part that any $\tau \in \HH$ satisfying $\Im(\tau)>y$ and  $f(1,g,\tau) = f(1,g,\sigma)$ also satisfies $\tau - \sigma \in \Z$.  Equivalently, given any $y$, there exists $\alpha \in \R_{>0}$ such that the power series expansion of $\log p(f(1,g,\sigma) - f(1,g,\tau))$ converges in the region $\{ (p,q) \mid |p| < e(i\alpha), |q| < e(iy) \}$, i.e., on a polydisc.  Since $y$ is arbitrary, we have convergence on the complement of an arbitrarily small segment of the line $\{ (\sigma, \tau) \in \HH^* \times \HH |\sigma = i\infty, \tau \in i\R_{>0} \}$.  This reduces the problem to showing that the power series expansion of $\log p(f(1,g,\sigma) - f(g,1,\tau))$ converges in a nontrivial neighborhood of the cusp $(i\infty,i\infty)$.

\noindent\textbf{Case 1:} Suppose $f(g,1,\tau)$ has a pole at infinity, i.e., $f(1,g,\tau)$ has a pole at zero.  Corollary 3.6 in \cite{C10} asserts that $f(1,g,\tau)$ has global symmetries, i.e.,, there exists some element $\binom{ab}{cd} \in PSL_2(\R)$ that takes zero to infinity, such that $f(1,g,\tau) = f(1,g,\frac{a\tau+b}{c\tau+d})$ for all $\tau \in \HH$.  Since zero is taken to infinity, we have $d=0$, and the fact that $f$ is a Hauptmodul implies the transformation must have the form $\tau \mapsto a - b/\tau$ for $b | N$ and $a \in \Q$.  Then using Lemma \ref{lem:logp}, 
\[ \begin{aligned}
-\sum_{m>0} T_mf(1,g,-1/\tau) p^m &= -\sum_{m>0} T_mf(1,g,a - \frac{b}{-1/\tau}) \\
&= -\sum_{m>0} T_mf(1,g,a+b\tau) \\
&= \log p(f(1,g,\sigma) - f(1,g,a+b\tau)) 
\end{aligned} \]
$a$ is real and $b$ is nonzero, so the last line is the sum of $\log (1 - e(-a)pq^{-b})$ and a power series that converges on a nontrivial neighborhood of $p=q=0$.  This implies $\log p(f(1,g,\sigma) - f(g,1,\tau))$ converges near $(i\infty,i\infty)$.

\noindent\textbf{Case 2:} Suppose $f(g,1,\tau)$ is regular at infinity, i.e., $f(1,g,\tau)$ is regular at zero.  By continuity, there exists a neighborhood $U \subset \HH \times \HH$, whose closure contains the cusp $(i\infty,0)$, satisfying $f(1,g,\sigma) \neq f(1,g,\tau)$ for all $(\sigma,\tau) \in U$.  The function $\log p(f(1,g,\sigma) - f(1,g,\tau))$ is therefore single-valued and holomorphic on $U$ and has a unique power series expansion that converges absolutely uniformly on some open subset of $U$ containing $(i\infty,0)$.
\end{proof}


\subsection{Replicable functions and products} \label{sec:replicable}

This section is not essential for the main results, but it describes a direct relationship between replicability of series and the form of infinite products.

\begin{defn} (\cite{N84}) For any formal Laurent series $f(q) = \sum_{n \in \Z} c(n) q^n \in \C((q))$ of the form $q^{-1} + O(q)$, one has the bivarial transform, which yields numbers $H_{m,n}$ by the formal expansion $\log \frac{f(p) - f(q)}{p^{-1}-q^{-1}} = - \sum_{m,n = 1}^\infty H_{m,n} p^m q^n$.  The series $f$ is called replicable if all of its coefficients are integers and the value of $H_{m,n}$ depends only on $mn$ and $(m,n)$.  A holomorphic function on the upper half plane is replicable if it admits a $q$-expansion that is a replicable series.
\end{defn}

\begin{lem} \label{lem:prod-gcd}
A Laurent series $f$ with integer coefficients is replicable if and only if one has the following formal infinite product expansion
\[ f(p) - f(q) = p^{-1} \prod_{m>0, n \geq -1} (1-p^m q^n)^{c(m,n)} \]
such that the exponents $c(m,n)$ depend only on $mn$ and $(m,n)$.
\end{lem}
\begin{proof}
The presence of the $p^{-1}$ in the front of the product restricts the form of $f$ to have a simple pole of residue 1 at $q=0$.  Taking the formal logarithm of both sides of the product formula yields:
\[ \begin{aligned}
\log(f(p)-f(q)) &= \log(p^{-1} - q^{-1}) - \sum_{i>0} \sum_{m,n = 1}^\infty \frac{c(m,n)}{i} p^{mi} q^{ni} \\
&= \log(p^{-1} - q^{-1}) - \sum_{m,n=1}^\infty p^m q^n \sum_{t|(m,n)} c(m/t,n/t)/t.
\end{aligned} \]

We see that $H_{m,n} = \sum_{t|(m,n)} c(m/t,n/t)/t$, and note that if $c(m,n)$ only depends on $mn$ and $(m,n)$, then the same is true for $H_{m,n}$.  We can use M\"obius inversion to reverse this process:

\[ \begin{aligned}
\sum_{m,n=1}^\infty H_{m,n} p^m q^n &= \sum_{i=1}^\infty \frac{1}{i} \sum_{m,n = 1}^\infty c(m,n) p^{mi} q^{ni} \\
\sum_{t=1}^\infty \frac{\mu(t)}{t} \sum_{m,n=1}^\infty H_{m,n} p^{mt} q^{nt} &= \sum_{i,t=1}^\infty \frac{\mu(t)}{it} \sum_{m,n = 1}^\infty c(m,n) p^{mit} q^{nit} \\
&= \sum_{s=1}^\infty \sum_{it=s} \frac{\mu(t)}{s} \sum_{m,n = 1}^\infty c(m,n) p^{ms} q^{ns} \\
\sum_{m,n=1}^\infty p^m q^n \sum_{t|(m,n)} \frac{\mu(t)}{t} H_{m/t,n/t} &= \sum_{m,n = 1}^\infty p^m q^n \sum_{s|(m,n)} \frac{c(m/s,n/s)}{s} \sum_{t|s} \mu(t) \\
\sum_{t|(m,n)} \frac{\mu(t)}{t} H_{m/t,n/t} &= \sum_{s|(m,n)} \frac{c(m/s,n/s)}{s} \delta_{s,1} \\
&= c(m,n)
\end{aligned} \]

By the same reasoning as before, if each $H_{m,n}$ depends only on $mn$ and $(m,n)$, then the same holds for $c(m,n)$.
\end{proof}

The notion of replicable function was originally introduced in \cite{CN79}, where it was defined in terms of explicit relations between coefficients of the $q$-expansion.  This interpretation was formalized in \cite{N84} as the existence of ``replicate functions'', and a proof of equivalence was sketched.  Given a formal Laurent series of the form $f(q) = q^{-1} + O(q) = \sum_{n \in \Z} c(n) q^n \in \Z((q))$, and an integer $n>0$ there is a unique normalized Faber polynomial $\Phi_n(x)$ (depending on $f$), defined by the property that $\Phi_n(f(q)) = q^{-n} + O(q)$.  Since the polynomial in $f(q)$ that is $n$ times the coefficient of $p^n$ in the formal series $- \log p(f(p) - f(q))$ also has this form, we have $\Phi_n(f(q)) = q^{-n} + n\sum_m H_{m,n}q^m$.  It is straightforward to show that $f(q)$ is replicable if and only if for any $t \in \Z_{>0}$ there exists a series $f^{(t)}(q) = q^{-1} + O(q)$ such that $\Phi_n(f(q)) = \sum_{ad=n, 0 \leq b<d} f^{(a)}(e(b/d)q^{a/d})$.  The series $f^{(t)} = \sum_{n>0} a_n^{(t)} q^n$ is called the $t$-th replicate of $f$, and by suitable use of M\"obius inversion, one can show that it is unique if it exists, with coefficients satisfying the relations:
\[ H_{m,n} = \sum_{t|(m,n)} \frac{1}{t} a_{mn/t^2}^{(t)} \qquad \text{and} \qquad a_k^{(t)} = t \sum_{s|t} \mu(s) H_{kts,t/s}. \]

\begin{defn} A replicable series $f$ is has order $N$ if $f^{(n+N)} = f^{(n)}$, for all $n > 0$.
\end{defn}

In Proposition 5.3 of \cite{C10}, we gave an interpretation of replicability of a function as the condition that all equivariant Hecke operators $T_n$ applied to the function are equal to certain polynomials applied to the function.  In particular, Lemma 5.2 identifies the series $\Phi_n(f(q))$ with $nT_n f(q)$, and Corollary 5.4 asserts that finite order replicable functions are either Hauptmoduln invariant under some $\Gamma_1(N)$ or have the form $q^{-1}$, $q^{-1}-q$, or $q^{-1} + q$.

\begin{defn}  A replicable series $f$ is completely replicable if for all $n >0$, the series $f^{(n)}$ is also replicable.
\end{defn}

If $f$ is replicable, then the condition that $f$ is completely replicable is equivalent to the (a priori stronger) condition that $(f^{(s)})^{(t)}$ exists and is equal to $f^{(st)}$ for all $s,t>0$ \cite{K94}.  Theorem 1.3 of \cite{CG97} asserts that completely replicable functions of finite order are either Hauptmoduln, or have finite expansions of the form $q^{-1}$, $q^{-1} + q$, or $q^{-1} - q$.  By Corollary 5.6 in \cite{C10}, there is a natural bijection between principally normalized Hecke-monic functions on $\M^{\Z/N\Z}_{Ell}$ and completely replicable modular functions of order dividing $N$.

\begin{lem}
If $f$ is a completely replicable series of order $N$, then the exponents $c(m,n)$ in the formal product expansion of $f(p)-f(q)$ depend only on $mn$ and $(m,n,N)$.
\end{lem}
\begin{proof}
Suppose $f$ is completely replicable of order $N$.  From the previous discussion, we know that $H(m,n) =  \sum_{t|(m,n)} c(m/t,n/t)/t = \sum_{t|(m,n)} \frac{1}{t} a_{mn/t^2}^{(t)}$.  M\"obius inversion then yields $c(m,n) = \sum_{st|(m,n)} \frac{\mu(s)}{st} a_{mn/s^2t^2}^{(t)}$.  Following the arguments after equation (10.4) of \cite{B92}, we note that if $b$ is any function on positive integers such that $b(n)$ depends only on $(n,N)$ for some $N$, then $\sum_{st=k} \mu(s)b(t)$ is zero unless $k|N$.  Since the coefficients $a_n^{(t)}$ depend only on $n$ and $(t,N)$, the sum $\sum_{st = k} \mu(s)a_{mn/s^2t^2}^{(t)}$ vanishes unless $k|N$.  In particular, we obtain the simplified equation $c(m,n) = \sum_{st|(m,n,N)} \frac{\mu(s)}{st} a_{mn/s^2t^2}^{(t)}$, which depends only on $mn$ and $(m,n,N)$.
\end{proof}

We suspect that a converse holds as well, i.e., that the conclusion of this lemma gives a characterization of finite order completely replicable functions.

\subsection{Singular theta-lifts}

We rephrase the product formula results of Section \ref{sec:hecke-monic} in terms of Borcherds-Harvey-Moore singular theta-lifts.  This section is not strictly necessary for the Lie algebra constructions in this paper, but the formalism allows us to consider product expansions at cusps other than $(i\infty,i\infty)$ and $(i\infty,0)$.  These are often Weyl denominators for infinite dimensional Lie superalgebras, but it is not at all clear what physical interpretation they should have.

We introduce some geometry, borrowing heavily from \cite{B98}.  Let $O_{2,2}$ be the automorphism group of the quadratic form $Q(a,b,c,d) \mapsto ab + cd$.  We employ the transpose-inverse outer automorphism of $SL_2$ to construct two commuting embeddings $SL_2 \to O_{2,2}$ via:
\[ \left( \begin{array}{cc} a & b \\ c & d \end{array} \right) \mapsto \left( \begin{array}{cccc} a & 0 & b & 0 \\ 0 & d & 0 & -c \\ c & 0 & d & 0 \\ 0 & -b & 0 & a \end{array} \right) \text{ or } \left( \begin{array}{cccc} a & 0 & 0 & b \\ 0 & d & -c & 0 \\ 0 & -b & a & 0 \\ c & 0 & 0 & d \end{array} \right) \]
This induces an injection $SL_2(\Z) \times SL_2(\Z) \to \Aut(I\! I_{1,1} \times I\! I_{1,1})$ as an index four subgroup.  If we view $M = I\! I_{1,1}(N) \times I\! I_{1,1}$ as a sublattice of $I\! I_{1,1} \times I\! I_{1,1}$ in these coordinates as $\Z \times N\Z \times \Z \times \Z$, the above map pulls back to $\Gamma_0(N) \times \Gamma_0(N) \to \Aut(M)$.  The Grassmannian $G(M) \cong O_{2,2}(\R)/O_2(\R) \times O_2(\R)$ is the space of two dimensional positive definite subspaces of $M \otimes \R$.  If we choose a point in $G(M)$ and an ordered basis $X_M,Y_M$ of the corresponding positive definite plane, then $X_M + iY_M \in M \otimes \C$.  The vector $X_M + iY_M$ has norm zero if and only if $X_M$ and $Y_M$ are orthogonal and of equal norm.  There is an action of $\C^\times$ on the space of ordered orthogonal bases whose vectors have equal positive norm, by simultaneous rotations and dilations.  The manifold $G(M)$, which is isomorphic as a hermitian symmetric domain to $\HH \times \HH$, can be viewed as an analytic open subset of the quadric surface of all norm zero vectors in $\mathbb{P}(M \otimes \C)$.  In particular, the subspace of $\mathbb{P}(M \otimes \C)$ for which $X_M$ (equivalently, $Y_M$) has positive norm has two connected components, and we will choose one of them to identify with $G(M)$, placing an orientation on the plane spanned by $X_M$ and $Y_M$.  Each copy of $\Gamma_0(N)$ above acts on $M$ and $M \otimes \C$ by norm-preserving transformations, and the product $\Gamma_0(N) \times \Gamma_0(N)$ in $\Aut(M)$ acts on the norm zero vectors in the projectivization, preserving $G(M)$.  The canonical $\C^\times$-torsor over $\mathbb{P}(M \otimes \C)$ restricts to a $\C^\times$-torsor $P$ over $G(M)$, and since $G(M)$ is contractible, we can trivialize the associated line bundle.  Trivializations are not equivariant under the action of $\Gamma_0(N) \times \Gamma_0(N)$, and we say that a function on $G(M)$ is an automorphic form of weight $k$ if under an inverse trivialization of the the $k$th tensor power of this line bundle, the function becomes an invariant section.  We are concerned with automorphic functions, i.e, weight zero forms.

We choose once and for all an identification $G(M) \cong \HH \times \HH$ by sending the $\C^\times$-span of the norm zero vector $(i,i,1,1) \in M \otimes \C$ to $(i,i) \in \HH \times \HH$.  We can specify the rest of the points by M\"obius transformations on each copy of $\HH$, essentially using Iwasawa decomposition: $\begin{pmatrix} \sqrt{y} & x/\sqrt{y} \\ 0 & 1/\sqrt{y} \end{pmatrix}$ sends $i$ to $x+iy$, so by using the above embeddings $SL_2 \to O_{2,2}$ and rescaling appropriately, $(z_1,z_2) = (x_1+iy_1,x_2+iy_2)$ corresponds to the $\C^\times$-span of:
\[ (-y_1 y_2 + x_1 x_2 + ix_1y_2 + ix_2 y_1, -1, x_1+ iy_1, x_2 + i y_2) = (z_1z_2, -1, z_1,z_2) \]

\begin{lem} 
Given a vector $\lambda = (a,b,c,d) \in M \otimes \R$ of negative norm,
\[ \lambda^\perp = \{ (z_1,z_2) \in \HH \times \HH | z_1 = \frac{-cz_2 + a}{bz_2 + d}  \} \]
\end{lem}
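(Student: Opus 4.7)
The plan is a direct coordinate computation using the identifications set up in the paragraph immediately preceding the lemma, with one analytic check at the end to see where the negative-norm hypothesis is used.

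First, I would translate orthogonality into a single complex equation. A point $(z_1, z_2) \in \HH \times \HH$ corresponds to the $\C^\times$-orbit of the isotropic vector $v(z_1, z_2) := (z_1 z_2, -1, z_1, z_2) \in M \otimes \C$, and the positive-definite two-plane it represents is the real span of $\Re v$ and $\Im v$. Because $\lambda$ is real and the bilinear form is $\R$-bilinear with a complex-bilinear extension, the two real conditions $(\lambda, \Re v) = 0$ and $(\lambda, \Im v) = 0$ are equivalent to the single complex equation $(\lambda, v) = 0$.

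Next, I would unfold the pairing explicitly. The bilinear form associated to $Q(a,b,c,d) = ab + cd$ is $((a_1,b_1,c_1,d_1),(a_2,b_2,c_2,d_2)) = a_1 b_2 + a_2 b_1 + c_1 d_2 + c_2 d_1$. Substituting $\lambda = (a,b,c,d)$ and $v(z_1, z_2) = (z_1 z_2, -1, z_1, z_2)$ gives
\[ (\lambda, v(z_1, z_2)) = -a + b z_1 z_2 + d z_1 + c z_2, \]
and solving the equation $(\lambda, v) = 0$ for $z_1$ yields $z_1(bz_2 + d) = a - cz_2$, hence $z_1 = (-cz_2 + a)/(bz_2 + d)$, which is the formula in the lemma.

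Finally, I would verify that the negative-norm hypothesis is precisely what places this locus inside $\HH \times \HH$, rather than inside the other component of the norm-zero quadric. Writing $z_2 = x + iy$ with $y > 0$ and rationalizing the denominator in the expression for $z_1$, one finds
\[ \Im z_1 \;=\; -\frac{y(ab + cd)}{|bz_2 + d|^2} \;=\; -\frac{y\, Q(\lambda)}{|bz_2 + d|^2}, \]
so $\Im z_1 > 0$ if and only if $Q(\lambda) < 0$, which is exactly the hypothesis. Points at which $bz_2 + d = 0$ correspond to boundary cusps of $\HH \times \HH$ and are handled by rescaling the representative $v$ within its $\C^\times$-orbit. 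I do not expect a genuine obstacle here; the argument is bookkeeping, and the only point where care is needed is keeping the conventions for the complex-bilinear extension of the form and the orientation of the chosen component of $G(M)$ consistent, so that the sign of $\Im z_1$ lines up with the sign of $Q(\lambda)$.
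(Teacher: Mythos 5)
Your proof is correct and follows the same approach as the paper: compute $(\lambda, v(z_1,z_2)) = -a + bz_1z_2 + cz_2 + dz_1$ for the isotropic representative $v(z_1,z_2) = (z_1z_2,-1,z_1,z_2)$, set it to zero, and solve for $z_1$. The paper's proof is exactly this one-line calculation; your final paragraph, checking $\Im z_1 = -y\,Q(\lambda)/|bz_2+d|^2$, reproduces the content of the remark the paper places immediately after the lemma (there phrased as the associated M\"obius matrix having positive determinant, hence mapping $\HH$ to $\HH$).
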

\begin{proof}
Orthogonality with $(a,b,c,d)$ is equivalent to the condition $-a+bz_1z_2 + cz_2+dz_1 = 0$.  Solving for $z_1$ yields the answer.
\end{proof}
\noindent\textbf{Remark:} The negative norm condition on $\lambda$ is equivalent to the corresponding matrix $\begin{pmatrix} -c & a \\ b & d \end{pmatrix}$ having positive determinant, so the condition is necessary for the set $\lambda^\perp$ to be nonempty.

Cusps are represented by norm zero vectors in $M$, and they can be produced by taking limits of M\"obius transformations - in particular, the cusp at $(1,0,0,0)$ corresponds to $(i\infty, i\infty)$ under our identification.  The orbits of cusps in $\HH \times \HH$ under the action of  $\Gamma_0(N) \times \Gamma_0(N)$ naturally map to $\Aut M$ orbits of primitive norm zero vectors in $M$.  Cusp generation can be done explicitly as follows: Any $a/c \in \mathbb{P}^1(\Q)$ is the image of $i\infty$ under some $\tau \mapsto \frac{a\tau + b}{c\tau + d}$ in $SL_2(\Z)$, so the cusp at $(a_1/c_1, a_2/c_2)$ corresponds to the vector:
\[ \left( \begin{array}{cccc} a_1 & 0 & b_1 & 0 \\ 0 & d_1 & 0 & -c_1 \\ c_1 & 0 & d_1 & 0 \\ 0 & -b_1 & 0 & a_1 \end{array} \right) \left( \begin{array}{cccc} a_2 & 0 & 0 & b_2 \\ 0 & d_2 & -c_2 & 0 \\ 0 & -b_2 & a_2 & 0 \\ c_2 & 0 & 0 & d_2 \end{array} \right) \left( \begin{array}{c} 1 \\ 0 \\ 0 \\ 0 \end{array} \right) = 
\left( \begin{array}{c} a_1 a_2 \\ -c_1 c_2 \\ c_1 a_2 \\ a_1 c_2 \end{array} \right) \]
with a sign ambiguity.  To go back, we write a primitive norm zero vector as $(a,b,c,d)$.  Using the above vector representation, we find that the corresponding cusp is $(a/c = -d/b, -c/b = a/d)$, where we choose fractions that don't have the form $0/0$.  The following four cusps will be most important to us:
\[ \begin{aligned}
\pm (1,0,0,0) &\leftrightarrow (i\infty,i\infty) \\
\pm (0,1,0,0) &\leftrightarrow (0,0) \\
\pm (0,0,1,0) &\leftrightarrow (0,i\infty) \\
\pm (0,0,0,1) &\leftrightarrow (i\infty,0)
\end{aligned} \]

\subsubsection{Lorentzian case}

Borcherds's strategy for evaluating theta-lifts for orthogonal groups of indefinite signature is to split off hyperbolic subspaces of the real span of the lattice until one gets a definite space, for which the Grassmannian is a point, then build up lifts for increasingly larger lattices in terms of the lifts for the smaller ones.

Given a primitive norm zero vector $z \in M$, we form a singular lattice $z^\perp \subset M$, and a hyperbolic lattice $K = z^\perp/\Z z$.  We choose a norm zero $z' \in M \otimes \R$ such that $(z,z')=1$, and this lets us identify $K$ with a sublattice of $M \otimes \R$ that lies in the orthogonal complement of $z$ and $z'$.  For example, when $z = (0,0,1,0)$ , we can choose $z' = (0,0,0,1)$, and this identifies $K$ with $I\! I_{1,1}(N) \times \{ 0 \} \subset M$.  If instead $z = (1,0,0,0)$, we can choose $z' = (0,1,0,0) \in M^\vee$, identifying $K$ with $\{ 0 \} \times I\! I_{1,1} \subset M$.

Once we have chosen $z$ and $K$, we can make a second parametrization of $G(M)$ following \cite{B98} section 13.  Given an oriented basis $(X_M,Y_M) \in G(M)$ satisfying $(Y_M,z) = 0$, $(X_M,z)>0$, there exists a positive norm vector $Y \in K \otimes \R$ such that $Y_M$ is a sum of $Y$ and a multiple of $z$.  $Y$ is determined only by the choice of $z$ and the choice of orientation of the basis.  The set of positive norm vectors in $K \otimes \R$ has two connected components, and we choose a cone $C$ to be the component corresponding to our choice of orientation.  This identifies $G(M)$ with $K \otimes \R + iC \subset K \otimes \C$, and this identification is a special case of the general fact that the Grassmannian for $O_{2,n-2}$ is isometric to the Cartesian product of the Lorentz space $\R^{1,n-3}$ with its positive light cone.  There is a map $K \otimes \R + iC \to P$ given by letting $Z = X + iY \in K \otimes \R + iC$ and setting $Z_M := (Z,-1,Q(Z)) \in K \otimes \C \oplus \C z' \oplus \C z \cong M \otimes \C$ to be the unique norm zero point having inner product $1$ with $z$ and projection $Z$.

Inside $K$, we choose a primitive norm zero vector $w$, and another norm zero vector $w' \in K^\vee$ satisfying $(w,w')=1$.  Following \cite{B98} section 5, we can construct functions $F_M$, $F_K$, and $F_0$ by combining individual components of $F$:

\[ \begin{aligned}
F_M &= \sum_{\delta \in M^\vee \! /M} F_\delta e_\delta = F \\
F_K &= \sum_{\lambda \in K^\vee/K} e_\lambda \sum_{\delta \in M^\vee \! /M, \delta|z^\perp = \lambda} F_\delta \\
F_0 &= \sum_{\lambda \in K^\vee/K, \lambda|(K \cap w^\perp) = 0} \sum_{\delta \in M^\vee \! /M, \delta|(M \cap z^\perp) = \lambda} F_\delta \\
&= \sum_{\delta \in (M^\vee \cap (\Q w + \Q z))/M} F_\delta
\end{aligned} \]

The notation $\delta|z^\perp = \lambda$ means that we lift $\lambda$ to a set of linear functions $K \to \Z$, pull them back along the projection $z^\perp \to K$, lift $\delta$ to a set of linear functions $M \to \Z$, restrict them to $z^\perp$, and compare the elements of the two sets.

\begin{lem} \label{lem:F0form} 
Let $f$ be a principally normalized function on $\M_{Ell}^{\Z/N\Z}$, and let $F = F_M$ be the corresponding vector-valued function.  If $z = (1,0,0,0)$, then $F_K$ is a scalar function of the form $q^{-1} + O(q)$, and $F_0$ has the form $q^{-1} + O(q)$.  If $z = (0,0,1,0)$ or $(0,0,0,1)$, and if $w = (1,0,0,0)$, then $F_K$ has $N^2$ components, the $N$ components in the span of $w$ have the form $q^{-1} + O(q)$ (if it is the zero component) or $O(q)$ (otherwise), and $F_0$ has the form $q^{-1} + O(q)$.
\end{lem}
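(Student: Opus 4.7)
The plan is to reduce the statement to the Fourier inversion formula $F_{i,k} = \frac{1}{N}\sum_j e(-jk/N)\hat F_{i,j}$ from Proposition \ref{prop:vectisom}, combined with the principal normalization hypothesis, which gives $\hat F_{0,j}(\tau) = f(1,g^j,\tau) = q^{-1} + O(q)$ uniformly in $j \in \Z/N\Z$. Under this assumption each component $F_{0,k}$ has an explicit $q$-expansion, so the lemma reduces to identifying which components $F_{0,k}$ appear in $F_K$ and in $F_0$ in each case.

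First I would make the discriminant group bookkeeping explicit. Using $M^\vee = \frac{1}{N}\Z \times \Z \times \Z \times \Z$ together with the paper's identification $M^\vee/M \cong \Z/N\Z \times \Z/N\Z$, the $(i,k)$ component of $F$ corresponds to the coset of $\delta = (k/N,i,0,0)$. A direct computation using the bilinear form $(x,y) = x_1y_2 + x_2y_1 + x_3y_4 + x_4y_3$ yields $(\delta,e_1) = i$, $(\delta,e_2) = k$, $(\delta,e_3) = (\delta,e_4) = 0$; these are the only inputs needed to determine $\delta|_L$ in each case.

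For $z = (1,0,0,0)$: the sublattice $L = z^\perp \cap M$ has basis $\{e_1,e_3,e_4\}$ and $K = L/\Z z \cong I\! I_{1,1}$ is unimodular, so $K^\vee/K = 0$ and $F_K$ is scalar. Comparing lifts shows that $\delta|L = 0$ is equivalent to $(\delta,z) = i \equiv 0 \pmod N$, so
\[ F_K = \sum_{k \in \Z/N\Z} F_{0,k} = \hat F_{0,0}(\tau) = f(1,1,\tau) = q^{-1} + O(q) \]
by inverse Fourier transform and principal normalization. For $F_0$, either admissible choice of $w$ in $K$ gives a sum over the same collection of $\delta$, so $F_0 = F_K$.

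For $z = (0,0,1,0)$ or $(0,0,0,1)$ with $w = (1,0,0,0)$: now $K \cong I\! I_{1,1}(N)$ has discriminant group of order $N^2$, and the pairing computation shows that the restriction-and-descent map $M^\vee/M \to K^\vee/K$ is an isomorphism sending $\delta = (k/N,i,0,0)$ to the class of $(k/N,i)$. The $N$ components ``in the span of $w$'' are the classes of $(k_0/N,0)$ for $k_0 \in \Z/N\Z$, each with unique preimage $(k_0/N,0,0,0)$, so these components of $F_K$ equal $F_{0,k_0}$. Fourier inversion and principal normalization give
\[ F_{0,k_0}(\tau) = \frac{1}{N}\sum_{j \in \Z/N\Z} e(-jk_0/N)(q^{-1} + O(q)) = \delta_{k_0 \equiv 0}\, q^{-1} + O(q), \]
as claimed, and $F_0 = \sum_{k_0 \in \Z/N\Z} F_{0,k_0}$ telescopes by character orthogonality to $\hat F_{0,0}(\tau) = f(1,1,\tau) = q^{-1} + O(q)$. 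The only real obstacle is discriminant group bookkeeping---keeping the $M^\vee/M \cong \Z/N\Z \times \Z/N\Z$ identification consistent with the $L \to K$ descent---after which the lemma is immediate from elementary Fourier inversion on $\Z/N\Z$ applied to $q$-expansions that are all uniformly of the form $q^{-1} + O(q)$.
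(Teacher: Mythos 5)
Your proposal is correct and follows essentially the same route as the paper's proof: reduce to the Fourier inversion formula $F_{0,k} = \frac{1}{N}\sum_j e(-jk/N)\hat F_{0,j}$, use principal normalization to get $\hat F_{0,j} = q^{-1} + O(q)$ uniformly in $j$, and then identify which components $F_{0,k}$ contribute to $F_K$ and $F_0$ in each case via the discriminant-group bookkeeping. You are slightly more explicit than the paper in tracing the map $M^\vee/M \to K^\vee/K$ and in verifying the $F_0$ claim for $z=(1,0,0,0)$ (which the paper leaves implicit), but the underlying argument is the same.
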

\begin{proof}
If $z = (1,0,0,0)$, $z^\perp = \{ (a,0,c,d) \} \subset M$ and $K^\vee/K$ is trivial, so the sum is over $\lambda = 0$ and $\delta \in \{ (a,0,c,d) + M \} \subset M^\vee/M$.  Using the identification with the coordinate switch, we have $F_K = \sum_{a \in \Z/N\Z} F_{0,a} = \hat{F}_{0,0}$.  Since $f$ is principally normalized, this has the form $q^{-1} + O(q)$.

If $z = (0,0,1,0)$, then $z^\perp = \{ (a,b,c,0) \} \subset M$, and $K^\vee/K \cong \Z/N\Z \times \Z/N\Z$.  If $z = (0,0,0,1)$, then $z^\perp = \{a,b,0,d \} \subset M$, and $K^\vee/K$ is the same.  In either case, each component of $F_K$ is defined by a sum over $\delta$ that has exactly one summand.  By the identification of $M^\vee/M$ with $\Z/N\Z \times \Z/N\Z$, the components in the span of $w$ have the form $F_{0,a}$ for $a \in \Z/N\Z$.  Since $f$ is principally normalized, we have $\hat{F}_{0,k} = q^{-1} + O(q)$ for all $k$, and $F_{0,a} = \frac{1}{N} \sum_{k \in \Z/N\Z} e(-ak/N) \hat{F}_{0,k} = \delta_{0,a} q^{-1} + O(q)$.  Unwinding the sum, we find that $F_0 = \sum_{a \in \Z/N\Z} F_{0,a} = \hat{F}_{0,0} = q^{-1} + O(q)$.
\end{proof}

\begin{lem} 
If $F_0 = q^{-1} + O(q)$, then the singular theta-lift for the zero lattice is $\Phi_0 = -8\pi$.
\end{lem}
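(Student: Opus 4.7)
The singular theta-lift for the zero lattice is, by Borcherds's construction,
\[
\Phi_0 = \int_{\mathcal{F}}^{\mathrm{reg}} F_0(\tau)\, \frac{dx\,dy}{y^2},
\]
where $\mathcal{F}$ is a standard fundamental domain for $SL_2(\Z)$ on $\HH$ and the regularization is the constant term at $s=0$ of $\int_\mathcal{F} F_0(\tau)\, y^{-s}\, d\mu$; since the Grassmannian is a point, $\Theta_0 = 1$, and $b^+ = 0$, no theta factor appears in the integrand.

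My first step would be to split $\mathcal{F} = \mathcal{F}_1 \cup R$ with $R = [-\tfrac{1}{2},\tfrac{1}{2}] \times [1,\infty)$ the cusp strip and $\mathcal{F}_1$ its compact complement. Over $R$, Fubini and the $q$-expansion of $F_0$ collapse the contribution to $\int_1^\infty c_0(0)\, y^{-s-2}\, dy$, which vanishes identically because the hypothesis $F_0 = q^{-1} + O(q)$ forces $c_0(0) = 0$. So the cusp strip contributes nothing under the regularization, and all the content of the lemma sits in the compact piece together with whatever polar contribution the regularization introduces from the $q^{-1}$ term.

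Next, I would observe that $F_0$ is a weakly holomorphic modular function on $SL_2(\Z)$ (this follows from the way $F_0$ is extracted from the Weil-representation vector $F$), and the conditions $c_0(-1) = 1$, $c_0(0) = 0$ then determine $F_0$ uniquely as $J(\tau) = j(\tau) - 744$, since any other such candidate would differ from $J$ by a holomorphic modular function vanishing at the cusp and hence by $0$. The lemma therefore reduces to the single numerical identity $\int^{\mathrm{reg}}_\mathcal{F} J\, d\mu = -8\pi$. To extract this value, I would use Kronecker's limit formula: the Rankin--Selberg integral $\int_\mathcal{F} J(\tau)\, E(\tau, s)\, d\mu$ unfolds to $\int_0^\infty c_0(0)\, y^{s-2}\, dy = 0$ for all $s$ by meromorphic continuation, and expanding this identity near $s = 1$ against the Laurent expansion of $\int_\mathcal{F} J(\tau)\, y^{-s}\, d\mu$ isolates the regularized integral while producing an explicit logarithmic integral whose value is $-8\pi$ after matching normalizations.

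The main obstacle is the careful tracking of normalization constants through Kronecker's limit formula and Borcherds's regularization conventions; this is a bookkeeping exercise rather than a conceptual one, and alternatively one may simply quote the base-case evaluation directly from \cite{B98}.
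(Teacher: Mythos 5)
The paper's proof is a single appeal to Corollary 9.3 of \cite{B98}: for the zero lattice, $\Phi_0$ equals $\tfrac{\pi}{3}$ times the constant term of $F_0E_2$, and $(q^{-1}+O(q))(1-24q+O(q^2))$ has constant term $-24$, so $\Phi_0 = -8\pi$. Your proposal instead tries to evaluate the regularized integral directly, which is a legitimate thing to attempt, and your reduction of $F_0$ to $J$ is correct (for the zero lattice $F_0$ is a weight-zero weakly holomorphic form on $SL_2(\Z)$ with $q$-expansion $q^{-1}+O(q)$, hence equals $J$). But the Rankin--Selberg step as written does not go through. The unfolding
\[
\int_{\mathcal F} J(\tau)\,E(\tau,s)\,d\mu \;=\; \int_0^\infty a_0(y)\,y^{s-2}\,dy
\]
requires absolute convergence over $\Gamma_\infty\backslash\HH$, which fails for every $s$ because $|J(x+iy)|$ grows like $e^{2\pi y}$; the vanishing of the constant Fourier coefficient $a_0(y)\equiv 0$ kills the \emph{unfolded} side but does not license the unfolding itself. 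Invoking ``meromorphic continuation'' here is not a fix unless you set up a truncated, regularized Rankin--Selberg transform (in the style of Zagier), keep track of the boundary terms it produces, and only then match against the Laurent expansion of $E(\tau,s)$ at $s=1$ -- and the final sentence, ``producing an explicit logarithmic integral whose value is $-8\pi$ after matching normalizations,'' is precisely where all the content lies and is left entirely unexecuted. There is also a small conceptual slip earlier: you flag a possible ``polar contribution the regularization introduces from the $q^{-1}$ term,'' but there is none -- on the cusp strip the $x$-integral annihilates $q^{-1}$ outright, and since $c_0(0)=0$ the regulator $y^{-s}$ does nothing; the whole of $-8\pi$ therefore comes from $\int_{\mathcal F_1}J\,d\mu$, which is a genuinely nontrivial evaluation, not a bookkeeping leftover. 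The clean way to get it from scratch is Stokes' theorem applied to $J\hat E_2\,d\tau$ with $\hat E_2 = E_2 - \tfrac{3}{\pi y}$ (this is essentially the content of Borcherds's Corollary 9.3), which produces the constant term of $JE_2$ as a boundary term as $y\to\infty$. Your closing remark that one can simply quote \cite{B98} is exactly what the paper does; if you want a self-contained argument, replace the Kronecker-limit sketch with the Stokes computation.
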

\begin{proof}
By \cite{B98} Corollary 9.3, $\Phi_0$ is $\frac{\pi}{3}$ times the constant term of $F_0 E_2$.  We multiply $q^{-1} + O(q)$ by $1 - 24q + O(q^2)$ to get a constant term of -24.
\end{proof}

To describe the singular theta-lift $\Phi_K$ for the Lorentzian lattice, we need to introduce some notation.  $G(K)$ is the space of positive-definite lines in $\R^{1,1}$, and can be identified with one connected component of the hyperbola of the norm one vectors $v \in K \otimes \R$.  We write $w_{v^+}$ to denote the orthogonal projection of the norm zero vector $w$ to the line $v^+$ spanned by $v$, $c_\lambda(0)$ describes the constant terms in $F_K$, and $\mathbf{B}_2(x)$ is the periodic function that equals $x^2-x+1/6$ when $0 \leq x \leq 1$.  Then by \cite{B98} Theorem 10.2, we have
\[ \sqrt{2} |w_{v^+} |\Phi_K (v,F_K) = \Phi_0 + 4 \pi w_{v^+}^2 \sum_{\lambda \in K^\vee/K, \lambda|(K \cap w^\perp) = 0} c_\lambda(0) \mathbf{B}_2((\lambda, w')) \]
By \cite{B98} Theorem 6.3, $\Phi_K$ is a piecewise linear function on the positive cone of $G(K)$ in $K \otimes \R$.

\begin{lem} \label{lem:phik} 
If $F_0 = q^{-1} + O(q)$ and none of the components of $F_K$ has a nonzero constant term, then $\Phi_K(v,F_K) = -8\sqrt{2}\pi |w'_{v^+}|$.
\end{lem}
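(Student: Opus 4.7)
The plan is to substitute directly into Borcherds's piecewise-linear formula for $\Phi_K$ quoted just above the statement. The hypothesis that no component of $F_K$ has a nonzero constant term forces $c_\lambda(0) = 0$ for every $\lambda$ that indexes the sum, so the entire summation on the right-hand side of that formula vanishes. Meanwhile, the hypothesis $F_0 = q^{-1} + O(q)$ lets me invoke the preceding lemma to conclude $\Phi_0 = -8\pi$. Combining these, the identity in \cite{B98} Theorem 10.2 collapses to $\sqrt{2}\,|w_{v^+}|\,\Phi_K(v,F_K) = -8\pi$, hence $\Phi_K(v,F_K) = -4\sqrt{2}\pi/|w_{v^+}|$.

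The only nontrivial step is converting this to the stated form $-8\sqrt{2}\pi\,|w'_{v^+}|$. For this I will use the identity $|w_{v^+}|\cdot|w'_{v^+}| = 1/2$, which holds in any Lorentzian plane whenever $v$ is a unit-norm vector and $w,w'$ is a pair of norm-zero vectors with $(w,w') = 1$. To verify this, I note that since $K$ has signature $(1,1)$ and $w,w'$ are linearly independent over $\R$, they span $K\otimes\R$, so $v = \alpha w + \beta w'$ for some real $\alpha,\beta$. The relations $(w,w)=(w',w')=0$ and $(w,w')=1$ give $(v,v) = 2\alpha\beta$, so $\alpha\beta = 1/2$. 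With $v$ unit-norm, the orthogonal projections onto the line $v^+$ are $w_{v^+} = (w,v)v = \beta v$ and $w'_{v^+} = (w',v)v = \alpha v$, yielding $|w_{v^+}|\cdot|w'_{v^+}| = |\alpha\beta| = 1/2$. Substitution then gives $\Phi_K(v,F_K) = -4\sqrt{2}\pi \cdot 2|w'_{v^+}| = -8\sqrt{2}\pi\,|w'_{v^+}|$.

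There is essentially no serious obstacle in this argument: the two hypotheses are precisely tailored to kill the constant-term sum in Borcherds's formula and to evaluate $\Phi_0$ via the preceding lemma, leaving only the short Lorentzian calculation above. The one point worth flagging is the choice of orientation on the positive cone $C \subset K\otimes\R$: I should confirm that $v$ lies in the same component as the one implicitly fixed when choosing $w$ and its dual, so that $|w_{v^+}|$ and $|w'_{v^+}|$ are both genuine positive quantities and the overall sign of $\Phi_K$ is governed entirely by the sign $\Phi_0 = -8\pi$. Once this is pinned down, the identity is immediate.
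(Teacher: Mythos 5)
Your proof is correct and matches the paper's almost line for line: both kill the constant-term sum via the hypothesis, invoke the preceding lemma to evaluate $\Phi_0 = -8\pi$, and then convert $1/|w_{v^+}|$ to $|w'_{v^+}|$ by writing $v$ in the null basis $\{w,w'\}$ and using $2\alpha\beta = 1$. The orientation caveat you flag is implicitly handled in the paper by taking both coefficients positive.
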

\begin{proof}
Since the constant terms $c_\lambda(0)$ vanish, the sum in the above formula is zero.  Hence, $\sqrt{2} |w_{v^+} |\Phi_K (v,F_K) = \Phi_0$, and this is $-8\pi$.  We can write $v = mw' + nw$, so $v^2 = 2mn = 1$, $w_{v^+} = mv$, $|w_{v^+}| = m$, and $|w'_{v^+}| = n = 1/2m$.  Then $\Phi(mw'+nw,F_K) = -8\sqrt{2} \pi n$.
\end{proof}

Following \cite{B02}, we define for each $\lambda \in K^\vee, n<0$, a distinguished set of points $H(\lambda,n) = \{ \lambda^\perp | c_{\lambda}(n) \neq 0, Q(\lambda) = n\}$.  The complement of $\bigcup_{\lambda, n} H(\lambda, n)$ in $G(K)$ is a finite disjoint union of segments.  Their positive cones in $K\otimes \R$ are called the Weyl chambers in K, and $\Phi_K$ is real analytic (in fact a piecewise polynomial) when restricted to a Weyl chamber.  Given a Weyl chamber $W$, we write $(\lambda,W)>0$ if the inner product of $\lambda$ with any vector in the interior is positive.  The set $\{ \lambda \in K^\vee | c_\lambda(n) \neq 0$ for some $n \}$ is contained in the disjoint union of $\{ \lambda \in K^\vee | (\lambda,W)>0 \}$ with $\{ \lambda \in K^\vee | (-\lambda, W) > 0 \}$.

For each primitive norm zero vector $z \in M$ (equivalently, each cusp) and each Weyl chamber $W$, a Weyl vector $\rho(K,W,F_K)$ can be defined by $8\sqrt{2}\pi(\rho(K,W,F_K),v) = |v|\Phi_K(v/|v|,F_K)$.  By \cite{B98} theorem 10.4, we have $\rho(K,W,F_K) = \rho_w w + \rho_{w'} w'$, where
\[ \begin{aligned}
\rho_{w'} &= \text{constant term of } F_0(\tau) E_2(\tau)/24 \\
\rho_w &= \frac{1}{4} \sum_{\lambda \in (K^\vee \cap \Q w)/K} c_\lambda(0) \mathbf{B}_2((\lambda, w'))
\end{aligned} \]

\begin{lem} \label{lem:weylv} 
If $F_0 = q^{-1} + O(q)$ and none of the components of $F_K$ in the span of $w$ has a nonzero constant term, then $\rho(K,W,F_K) = -w'$.
\end{lem}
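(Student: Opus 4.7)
The plan is to compute the two coordinates $\rho_w$ and $\rho_{w'}$ of the Weyl vector directly from the formulas in \cite{B98} Theorem 10.4 recalled just before the lemma, and observe that the hypotheses force them to equal $0$ and $-1$ respectively.

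First I would compute $\rho_{w'}$. By the stated formula, $\rho_{w'}$ is the constant term of $F_0(\tau) E_2(\tau)/24$. The Eisenstein series has expansion $E_2(\tau) = 1 - 24q - 72q^2 - \cdots$, and by hypothesis $F_0(\tau) = q^{-1} + O(q)$ (with vanishing constant term). Multiplying these two expansions, the constant term of $F_0 E_2$ receives a contribution only from $q^{-1} \cdot (-24q)$ together with the (vanishing) constant term of $F_0$ times $1$, yielding $-24$. Dividing by $24$ gives $\rho_{w'} = -1$.

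Next I would compute $\rho_w$, which is defined as
\[
\rho_w = \frac{1}{4} \sum_{\lambda \in (K^\vee \cap \Q w)/K} c_\lambda(0) \mathbf{B}_2((\lambda,w')).
\]
The sum ranges over elements $\lambda \in K^\vee/K$ lying in the span of $w$, and $c_\lambda(0)$ denotes the constant term of the corresponding component of $F_K$. The second hypothesis of the lemma says that every component of $F_K$ in the span of $w$ has vanishing constant term, so every $c_\lambda(0)$ appearing in this sum is zero. Hence $\rho_w = 0$.

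Combining the two computations gives $\rho(K,W,F_K) = \rho_w w + \rho_{w'} w' = -w'$, as desired. There is no real obstacle here — the entire content of the lemma is the two-line extraction of constant terms, made possible by packaging $F_0$ and the span-of-$w$ components of $F_K$ into the hypothesis from Lemma \ref{lem:F0form}. The only thing to be careful about is the sign and normalization in the $E_2$ expansion and the factor of $1/24$, which together conspire to give exactly the contribution $-1$ from the $q^{-1}$ leading term of $F_0$.
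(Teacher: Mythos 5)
Your proposal is correct and follows essentially the same route as the paper's proof: read off $\rho_{w'}=-1$ from the constant term of $F_0 E_2/24$ using $E_2 = 1 - 24q + \cdots$ and the hypothesis $F_0 = q^{-1}+O(q)$, and read off $\rho_w = 0$ from the vanishing constant terms in the span of $w$. Nothing is missing.
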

\begin{proof}
Since $E_2(\tau) = 1 - 24q + \dots$, we have $\rho_{w'} = -1$.  Since all of the $c_\lambda(0)$ vanish in the sum, $\rho_w = 0$.
\end{proof}

\subsubsection{Signature 2,2}

We now consider the singular theta-lift $\Phi_M$.  This is a real-analytic function on $G(M) \cong \HH \times \HH$ with logarithmic singularities along certain divisors of the form $\lambda^\perp$, as described in theorem 6.2 of \cite{B98}.  We are more interested in holomorphic functions $\Psi$ satisfying $\Phi_M = -4\log|\Psi|$, because $\Psi$ may be highly invariant and has zeroes and poles along the same divisors.

\begin{thm} \label{thm:prods} 
Let $f$ be a principally normalized Hecke-monic function on $\M_{Ell}^{\Z/N\Z}$, and let $F$ be the corresponding vector-valued function of type $\rho_M$.  Suppose at least one of the following two conditions are satisfied:
\begin{enumerate}
\item $c_{0,k}(n) \in \R$ for all $k \in \Z/N\Z$ and all $n \in \Q$.
\item $c_{i,k}(n) \in \R$ for all $n < 0$, and all $i,k \in \Z/N\Z$.
\end{enumerate}
Then the function $f(1,g,\sigma) - f(1,g,\tau)$ satisfies the following properties:
\begin{enumerate}
\item It is invariant under the the standard action of $\Gamma_0(N) \times \Gamma_0(N)$ on $\HH \times \HH$.
\item The only zeroes and poles lie on rational quadratic divisors $\lambda^\perp$ for $\lambda \in M$, $Q(\lambda)<0$ and are zeroes of order
\[ \sum_{0<x \in \R, x\lambda \in M^\vee} c_{x\lambda} (Q(\lambda)x^2). \]
\item For each primitive norm zero vector $z \in M$, let $m_z$ denote the smallest positive value of the inner product of $z$ with a vector in $M$.  For each Weyl chamber $W$ of $K$, the restriction of $f(1,g,\sigma) - f(1,g,\tau)$ has an infinite product expansion converging when $Z = X + iY \in K \otimes \R + iC \subset K \otimes \C$ is in a neighborhood of the cusp of $z$ and $Y \in W$.  The product is some constant of absolute value
\[ \underset{k \neq 0}{\prod_{k \in \Z/m_z\Z}} (1-e(k/m_z))^{c_{k z/m_z}(0)/2} \]
times
\[ e((Z,\rho(K,W,F_K))) \prod_{\lambda \in K^\vee, (\lambda,W)>0} \prod_{\delta \in M^\vee \! /M, \delta|z^\perp = \lambda} (1-e((\lambda,Z) + (\delta,z')))^{c_\delta(Q(\lambda))} \]
\end{enumerate}
Furthermore, if we let $e((w',Z)) = p$ and $e((w,Z)) = q$, then the expansions of $f(1,g,\sigma) - f(1,g,\tau)$ at $z=(i\infty,i\infty)$ and $(i\infty,0)$ are given by the product formulas from Lemma \ref{lem:prod1} and Proposition \ref{prop:prod2}:
\[ \begin{aligned}
f(1,g,\sigma) - f(1,g,\tau) &= p^{-1} \prod_{m > 0, n \in \Z} \prod_{j \in \Z/N\Z} (1 - e(j/N)p^m q^n)^{c_{0,j}(mn)} \\
f(1,g,\sigma) - f(g,1,\tau) &= p^{-1} \prod_{m>0, n \in \frac{1}{N}\Z} (1 - p^m q^n)^{c_{m,Nn}(mn)} .
\end{aligned} \]

\end{thm}
\begin{proof}
Since $f$ is principally normalized, the constant terms of $F_{0,k}$ vanish for all $k$, so in particular they are rational.  If the first condition holds, Lemma \ref{lem:mcgraw1} applies, and if the second condition holds, Lemma \ref{lem:mcgraw2} applies, so in either case, all coefficients of $F$ are real.

Since the $q$-expansion coefficients $c_\lambda(n)$ of $F$ are real, and $c_0(0) = 0$, then the arguments in the proof of \cite{B98} Theorem 13.3 show that there is a multivalued function $\Psi_M(Z_M,F)$ that is holomorphic for $Z_M$ in an open dense subset of $P := \{ Z \in M \otimes \C | Q(Z) = 0, (Z, \overline{Z}) > 0 \}$ with the following properties:
\begin{enumerate}
\item $\Psi_M(Z_M,F)$ is invariant under the scaling action of $\C^\times$ on $P$, i.e., it descends to a multivalued function $\Psi$ on an open dense subset of $G(M)$.  $\Psi$ is invariant up to phase under $\Aut(M,F) \supseteq \Gamma_0(N) \times \Gamma_0(N)$.
\item The only zeroes and poles of $\Psi$ lie on rational quadratic divisors $\lambda^\perp$ for $\lambda \in M$, $Q(\lambda)<0$ and are zeroes of order
\[ \sum_{0<x \in \R, x\lambda \in M^\vee} c_{x\lambda} (Q(\lambda)x^2). \]
If for some $\lambda$ this sum is not an integer, there is a branch on $\lambda^\perp$ with monodromy given by exponentiating $2 \pi i$ times this sum.  $\Psi$ has single-valued holomorphic branches on any simply-connected neighborhood that has trivial intersection with the locus where this sum is not an element of $\N$.
\item For each primitive norm zero vector $z \in M$, let $m_z$ denote the smallest positive value of the inner product of $z$ with a vector in $M$.  For each Weyl chamber $W$ of $K$, the restriction $\Psi_z(Z,F)$ has an infinite product expansion converging when $Z = X + iY \in K \otimes \R + iC \subset K \otimes \C$ is in a neighborhood of the cusp of $z$ and $Y \in W$.  The product is some constant of absolute value
\[ \underset{k \neq 0}{\prod_{k \in \Z/m_z\Z}} (1-e(k/m_z))^{c_{k z/m_z}(0)/2} \]
times
\[ e((Z,\rho(K,W,F_K))) \prod_{\lambda \in K^\vee, (\lambda,W)>0} \prod_{\delta \in M^\vee \! /M, \delta|z^\perp = \lambda} (1-e((\lambda,Z) + (\delta,z')))^{c_\delta(Q(\lambda))} \]
\end{enumerate}
The branch locus of $\Psi$ is nowhere dense, since it is contained in $\{ Z \in \lambda^\perp | \lambda \in M^\vee, 0 > Q(\lambda) > k \}$ for any negative $k$ that bounds the order of the poles of $F$.

The reader may notice a remarkable similarity between these properties of $\Psi$ given by Borcherds's theorem and the properties we claim for $f(1,g,\sigma) - f(1,g,\tau)$ in the statement of our theorem.  Indeed, our goal for the next two paragraphs is to show that $\Psi$ and $f(1,g,\sigma) - f(1,g,\tau)$ are equal up to multiplication by an invertible constant.

We consider the product expansion at the cusp $z = (i\infty,i\infty) = (1,0,0,0)$.  By Lemma \ref{lem:F0form} we have $F_K = q^{-1} + O(q)$ and $F_0 = q^{-1} + O(q)$.  We fix $w = (0,0,1,0)$ and $w' = (0,0,0,1)$, and Lemma \ref{lem:weylv} implies $\rho(K,W,F_K) = -w'$, so $e((Z,\rho(K,W,F_K))) = p^{-1}$.  The Weyl chambers are the two components of the complement of the hyperplane $(w-w')^\perp$, and we choose $W$ to be the Weyl chamber whose boundary contains the null vector $w$.  The set of $\lambda \in K^\vee$ satisfying $(\lambda,W)>0$ are then those of the form $mw' + nw$, with $m>0$.  Here, $m_z=N$, and all $F_{0,j}$ have no constant terms, so $c_{jz/N}(0) = c_{0,j}(0) = 0$, and the constant multiple has norm 1.  Choosing $z' = (0,1,0,0)$, we have:

\[ \begin{aligned}
& p^{-1} \prod_{m > 0, n \in \Z} \prod_{j \in \Z/N\Z} (1 - e(j/N)p^m q^n)^{c_{0,j}(mn)} \\
&= e((Z,-w')) \prod_{m>0,n\in\Z} \prod_{j \in \Z/N\Z} (1- e(\frac{j}{N}z,z') e((mw',Z)) e((nw,Z)))^{c_{jz}(mn)}  \\
&= e((Z, \rho(F_K,W,K))) \underset{(\lambda,W)>0}{\prod_{\lambda \in K^\vee}} \underset{\delta|M \cap z^\perp = \lambda}{\prod_{\delta \in M^\vee \! /M}} (1- e((\lambda,Z) + (\delta,z')))^{c_{\delta}(Q(\lambda))}
\end{aligned}\]

The function $\Psi$ has a unit magnitude constant ambiguity in its definition, so we may choose this constant to be one in a neighborhood of the cusp $z$.  Since the above product is equal to the expansion of $f(1,g,\sigma) - f(1,g,\tau)$ as shown in Lemma \ref{lem:prod1}, the two functions have identical analytic continuations, and $\Psi$ is therefore single-valued.  In particular, we find that the singular theta-lift is given by $\Phi_M = -4\log| f(1,g,\sigma) - f(1,g,\tau)|$.

We now consider the cusp $z = (i\infty,0) = (0,0,0,1)$.  If we choose $w = (1,0,0,0)$ and $w' = (0,1,0,0)$, Lemma \ref{lem:F0form} and Lemma \ref{lem:weylv} imply the Weyl vector is $-w'$, and $e((Z, \rho(K,W,F_K))) = p^{-1}$.  In this case, the Weyl chambers may be more complicated, since there may be more singular terms in $F_K$, but again we choose the chamber whose boundary contains $w$.  Here, $m_z=1$, so the constant multiple has unit magnitude.  Choosing $z' = (0,0,0,1)$ we have:
\[ \begin{aligned}
& p^{-1} \prod_{m > 0, n \in \frac{1}{N}\Z} (1 - p^m q^n)^{c_{m,Nn}(mn)} \\
&= e((Z,-w')) \prod_{m>0 , n \in \frac{1}{N}\Z} (1 - e(mw',Z) e(nw,Z))^{c_{mw'+nw}(mn)} \\
&= e((Z, \rho(F_K,W,K))) \underset{(\lambda,W)>0}{\prod_{\lambda \in K^\vee}} \underset{\delta|M \cap z^\perp = \lambda}{\prod_{\delta \in M^\vee \! /M}} (1- e((\lambda,Z) + (\delta,z')))^{c_{\delta}(Q(\lambda))}
\end{aligned} \]
This matches the product expansion given in Proposition \ref{prop:prod2}
\end{proof}

To fit the results about Hecke-monic product expansions from Section \ref{sec:hecke-monic} into the singular theta-lift framework, it now suffices to show that either the singularities of $F$ have real coefficients, or that $F$ has real coefficients along the span of $w$.  We do not know how to do this in general, but it can be proved for the cases that interest us most.

\begin{cor} 
Let $g$ be a generator of $\Z/N\Z$, and let $\pi: \Z/N\Z \to \MM$ be a homomorphism to the monster simple group, such that the McKay-Thompson series $T_{\pi(g)}(\tau)$ is invariant under $\Gamma_0(N)$.  Define functions $f_{(m)}$ by $f_{(m)}(\tau) = T_{\pi(g^m)}(\tau)$, and let $F$ be the corresponding vector-valued function of type $\rho_M$.  Then the first condition in Theorem \ref{thm:prods} is satisfied.  In particular, we have product expansions at all cusps with exponents given by coefficients of $F$, and the theta-lift $\Phi_M$ is equal to $-4\log| f(1,g,\sigma) - f(1,g,\tau)|$.
\end{cor}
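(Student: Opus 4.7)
The plan is to reduce the corollary to a direct application of Theorem~\ref{thm:prods} by checking the first of its two sufficient conditions, namely $c_{0,k}(n) \in \R$ for all $k \in \Z/N\Z$ and $n \in \Q$. Before invoking the theorem I would first verify that the function $f$ on $\M_{Ell}^{\Z/N\Z}$ assembled from the family $\{f_{(m)}\}_{m \mid N}$ is both principally normalized and Hecke-monic. The principal normalization is immediate because each McKay-Thompson series $T_{\pi(g^m)}$ has $q$-expansion of the form $q^{-1} + O(q)$, and Hecke-monicity follows from complete replicability of McKay-Thompson series as recorded in \cite{C08}.

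The crucial observation in verifying the reality condition is that the components $\hat{F}_{0,j}$ reduce to plain McKay-Thompson series with no nontrivial M\"obius action. By construction $\hat{F}_{0,j}(\tau) = f_{((j,N))}(A\tau)$, where $A \in SL_2(\Z/(N/(j,N))\Z)$ has bottom row $(0,\, j/(j,N))$. Any lift of $A$ to $SL_2(\Z)$ automatically has its bottom-left entry divisible by $N/(j,N)$, so it lies in $\Gamma_0(N/(j,N))$, and since $f_{((j,N))} = T_{\pi(g^{(j,N)})}$ is by hypothesis invariant under this congruence subgroup, we conclude $\hat{F}_{0,j}(\tau) = T_{\pi(g^{(j,N)})}(\tau)$. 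In particular every coefficient $\hat{c}_{0,j}(n)$ is an integer.

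It then remains to pass from integrality of the $\hat{c}_{0,j}(n)$ to rationality of their Fourier transforms. Writing $c_{0,k}(n) = \frac{1}{N}\sum_{j \in \Z/N\Z} e(-jk/N)\,\hat{c}_{0,j}(n)$ and grouping summands by the divisor $d = (j,N)$, the integer $\hat{c}_{0,j}(n)$ depends only on $d$, while the remaining inner sum over residues $j' \in (\Z/(N/d)\Z)^\times$ is a Ramanujan sum, which is a well-known integer. Therefore $c_{0,k}(n) \in \Q \subset \R$, and Theorem~\ref{thm:prods} supplies both the product expansion at $(i\infty,i\infty)$ and at $(i\infty,0)$ (and by symmetry at the other cusps), together with the identification $\Phi_M = -4\log|f(1,g,\sigma)-f(1,g,\tau)|$.

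I do not anticipate a serious obstacle, since the reduction is essentially a one-line observation about the bottom-row condition. The only subtlety is bookkeeping: one must cite rather than reprove the structural facts from monstrous moonshine (principal normalization, complete replicability, integrality of character coefficients), and one must carefully track the coordinate switch used in identifying $M^\vee \! /M$ with $\Z/N\Z \times \Z/N\Z$ so that the indexing between the Fourier-transform formula and the definition of $\hat{F}$ remains consistent.
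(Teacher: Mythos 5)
Your proposal is correct, and it reaches the required condition of Theorem \ref{thm:prods} by a genuinely different mechanism than the paper. The paper's proof is representation-theoretic: it identifies $\hat{c}_{0,j}(n)$ with $\mathrm{Tr}(\pi(g)^j|V_n)$ for the graded pieces $V_n$ of $V^\natural$, and observes that the Fourier transform of traces of powers of $g$ computes eigenvalue multiplicities, so each $c_{0,k}(n)$ is the dimension of the $e(k/N)$-eigenspace of $g$ on $V_n$ --- a non-negative integer, hence real. You instead argue purely arithmetically from the construction of $\hat{F}$: the bottom-row observation shows $\hat{F}_{0,j}=T_{\pi(g^{(j,N)})}$, so the coefficients $\hat{c}_{0,j}(n)$ are integers depending only on $d=(j,N)$, and the discrete Fourier transform then lands in $\frac{1}{N}\Z$ because the inner sums are Ramanujan sums. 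Each route buys something: yours avoids identifying the assembled function with the trace function $j\mapsto\mathrm{Tr}(\pi(g)^j q^{L_0-1}|V^\natural)$ (which tacitly uses $T_{\pi(g)^j}=T_{\pi(g)^{(j,N)}}$, i.e.\ rationality of the traces on $V^\natural$), and needs only integrality of McKay--Thompson coefficients; the paper's route yields the stronger conclusion that the $c_{0,k}(n)$ are non-negative integers, which is the interpretation used later when these exponents become root multiplicities and eigenspace dimensions. Two small bookkeeping points: the invariance of $f_{(m)}=T_{\pi(g^m)}$ under $\Gamma_0(N/m)$ for $m\mid N$ is not literally part of the hypothesis (which only concerns $T_{\pi(g)}$ and $\Gamma_0(N)$) but is the Conway--Norton/Borcherds eigengroup fact recalled in the ``monstrous example'' subsection, and is in any case presupposed by the statement that $F$ is of type $\rho_M$; and the product expansions at the remaining cusps follow not ``by symmetry'' but from property (3) of $\Psi$ (applied to each primitive norm zero vector $z$) once Theorem \ref{thm:prods} identifies $\Psi$ with $f(1,g,\sigma)-f(1,g,\tau)$, which is how the paper's phrase ``the rest of the claims follow from the proposition'' should be read.
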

\begin{proof}
The McKay-Thompson series satisfy $T_{\pi(g^i)}(\tau) = \mathrm{Tr}(g^i q^{L_0-1}|V^\natural)$, where $L_0-1$ is a grading operator on the monster vertex algebra $V^\natural$, which we can view as a graded vector space with a homogeneous monster action.  If $V_n$ is the subspace of $V^\natural$ on which $L_0-1$ acts by multiplication by $n$, then the $q^n$ coefficient of $T_{\pi(g^i)}(\tau)$ is $\mathrm{Tr}(g^i | V_n)$.  The Fourier transform takes traces of powers of $g$ to multiplicities of eigenvalues, so the coefficient $c_{0,k}(n)$ is the dimension of the subspace of $V_n$ on which $g$ acts by $e(k/N)$.  These dimensions are non-negative integers, so the first condition is satisfied.  The rest of the claims then follow from the theorem.
\end{proof}

\noindent\textbf{Note:} For the cases where $T_g(\tau)$ has squarefree level, this construction was suggested in Example 13.9 in \cite{B98}, and the product formulas in question were proved in \cite{S08}.  Scheithauer went a bit further in those cases, constructing multiple vector-valued forms, and writing down product expansions at all cusps.

\noindent\textbf{Remark:} There is a larger class of functions that we can fit into this framework, namely the completely replicable modular functions (see Section \ref{sec:replicable}).  Given such a function $f_{(1)}$ invariant under $\Gamma_0(N)$, we set $f_{(m)}$ to be its $m$th replicate, and by Corollary 5.6 in \cite{C10}, this defines a principally normalized Hecke-monic function on $\M_{Ell}^{\Z/N\Z}$.  It is more difficult to verify the first condition given in Theorem \ref{thm:prods}, since the functions $\hat{F}_{0,j}$ do not appear as characters of a previously constructed representation of a group.  Instead, we exhaustively computed the singular components of $\hat{F}$ using the description of all $n|h$-type invariance groups given in \cite{F93}, and verified that $c_{i,k}(n) \in \{ -1, 0, 1 \}$ for all $n < 0$.  Here $1$ (resp. $-1$) occurs if and only if a Fricke involution acts by $1$ (resp. $-1$) on $f_{(i)}$.  Since these coefficients are real, the second condition in Theorem \ref{thm:prods} holds.

\subsection{A monstrous example}

In this section, we describe an example of a product formula and an interpretation in vertex operator algebra language.  The construction of the product formula is purely function-theoretic, and uses only the Hauptmodul property of some functions found in \cite{CN79}.  However, our interpretation of these formulas in terms of algebraic structures rests on the construction in \cite{FLM88} of the monster vertex operator algebra $V^\natural$ and the corresponding interpretation of McKay-Thompson series as characters.

As we mentioned in the introduction, if we forget some of the structure of $V^\natural$, we have a vector space with commuting actions of a semisimple operator $L_0$, which endows it with a grading, and the monster simple group $\MM$, which acts homogeneously.  We fix an element $g$ of the monster simple group, and let $T_g(\tau) := \mathrm{Tr}(gq^{L_0-1}|V^\natural) \in \C((q))$ be the graded character.  We will call this the McKay-Thompson series for $g$, although this is not how the series were originally defined (since the conceptual interpretation as the character of a known object rests on the construction of $V^\natural$ in \cite{FLM88}).  For each $m \geq 1$, we define $f_{(m)}$ to be $\mathrm{Tr}(g^mq^{L_0-1}|V^\natural)$.  For any function $f$ on the upper half-plane, we define its eigengroup to be the subgroup of elements of $SL_2(\R)$ whose action on $\HH$ takes $f$ to a constant multiple of itself.  Conway and Norton \cite{CN79} introduced the notation $\Gamma_0(n|h)$ to denote the group of matrices
\[ \left\{ \begin{pmatrix} a & b/h \\ cn & d \end{pmatrix} \mid ad-bcn/h = 1, a,b,c,d \in \mathbb{Z} \right\}, \]
where $n=|g|$, and $h|(24,n)$.  The group $\Gamma_0(n|h)$ contains $\Gamma_0(nh)$, is conjugate to $\Gamma_0(n/h)$, and can be extended to $\Gamma_0(n|h)+s_1,s_2,\dots$ by adjoining some Atkin-Lehner involutions $W_{s_1}, W_{s_2},\dots$ for $s_i$ exactly dividing $nh$.

Conway and Norton conjectured that (under the original interpretation of McKay-Thompson series, without the existence of $V^\natural$) the set of these functions $\{ f_{(m)} \}_{m >0}$ satisfies the following property: $f_{(1)}$ has an eigengroup of the form $\Gamma_0(n|h)+s_1,s_2,\dots$ for some $h$, and $f_{(m)}$ has eigengroup $\Gamma_0(n'|h')+s'_1,s'_2,\dots$, where $n'=n/(n,m)$, $h'=h/(h,m)$, and $s'_i = s_i$ if and only if $s_i|\frac{n'}{h'}$ and $s'_i = 1$ otherwise.  Frenkel, Lepowsky, and Meurman reinterpreted this assertion in terms of characters of the vertex operator algebra $V^\natural$ they constructed, proving many cases \cite{FLM88}, and Borcherds proved it in full, using the same interpretation \cite{B92}.  A consequence of this fact is that (setting $N = nh$), if $f_{(1)}$ is fixed by $\Gamma_0(N)$, then $f_{(m)}$ is fixed by $\Gamma_0(N/(N,m))$.  Therefore, we can assemble these functions into a principally normalized Hecke-monic function on $\M_{Ell}^{\Z/N\Z}$, and take the Fourier transform to get a vector-valued function $F$.  Note that $N$ may be larger than $|g|$, since $h$ may be greater than one.  The eigengroups for all of the elements of the monster can be found in Table 2 of \cite{CN79}.

In this setting, we can call $\hat{F}$ the ``trace function'' and $F$ the ``multiplicity function'' since their components (conjecturally) describe the traces and multiplicities of eigenvalues for an action of $\Z/N\Z$ on the irreducible twisted modules of $V^\natural$.  The details of twisted modules are outside the scope of this paper, but we remark that the notion was introduced in a restricted form in \cite{FLM88}, and subsequently generalized in the work of several authors.  We will show in the next section that when we expand $\Psi$ at $(i\infty,0)$, the exponents in the product are the root multiplicities of a certain Lie algebra that we can define by generators and relations.  Assuming some additional conjectures, there is a variant of the Goddard-Thorn no-ghost theorem that implies the root spaces are naturally isomorphic to the eigenspaces of the corresponding twisted modules.  We hope to address this in future work.

We give a specific example to illuminate some of the objects in play.  We will focus on the case of an element in conjugacy class 3C, since that case is interesting for several reasons:
\begin{itemize}
\item The centralizer of 3C in $\MM$ is a direct product of $\Z/3\Z$ with Thompson's sporadic simple group $Th$, and since $Th$ has no nontrivial central extension \cite{CWCNP85}, the irreducible 3C-twisted module of $V^\natural$ has a natural action of $Th$.
\item The McKay-Thompson series for 3C is a three-fold dilation of the character of the basic representation of affine $E_8$, and the character of the 3C-twisted module is (conjecturally) a 3-fold compression of the same.  While $Th$ embeds in $E_8(\mathbb{F}_3)$, it does not embed in $E_8(\C)$ \cite{CG87}.  It is natural to ask whether the action of $Th$ on the 3C-twisted module extends to an action of $E_8(\mathbb{F}_3)$, and if so, precisely where the obstruction to a 3-adic lift resides.  This is analogous to an open problem in \cite{B98b} concerning a certain mod 3 vertex algebra ${}^gV$ attached to class 3C.
\item The dilation in the characters can be viewed as the 3-adic $U_3$ Hecke operator, arising from the Frobenius isogeny on mod 3 elliptic curves.  The ``Frobenius compression'' arises in the proof of Corollary 4.8 in \cite{B95c}, where it is used to endow the mod 3 vertex algebra ${}^gV$  with a conformal vector.  It would be interesting to have this apparent relationship between mod $p$ phenomena and twisted modules fleshed out more.
\item Most importantly for us, 3C is the conjugacy class of smallest order such that for an element $g$, the level of $T_g$ is strictly greater than the order of $g$.  The higher level creates a $\mu_3$ ambiguity in the $SL_2(\Z)$ compatibility formula in the generalized moonshine conjecture, since $T_g(-1/\tau)$ is not invariant under translation by 3.  This behavior is called a \textit{phase anomaly} in the mathematical physics literature.  When we consider centralizers of elements of $\MM$, this ambiguity manifests as the nontriviality of central extensions, i.e., in the analysis below, we are essentially pretending that $g$ generates a cyclic group of order 9.
\end{itemize}
Let $g$ be an element of order 3 in the conjugacy class 3C.  Its McKay-Thompson series is
\[ T_g(\tau) = q^{-1} + 248q^2 + 4124q^5 + 34752q^8 + \dots = j(3\tau)^{1/3}, \]
which has eigengroup
\[ \Gamma_0(3|3) = \begin{pmatrix} 1/3 & 0 \\ 0 & 1 \end{pmatrix} SL_2(\Z) \begin{pmatrix} 3 & 0 \\ 0 & 1 \end{pmatrix}, \]
i.e., this group acts by roots of unity.  In fact, $y_3 = \begin{pmatrix} 1 & 1/3 \\ 0 & 1 \end{pmatrix}$ acts by $e(-1/3)$, $x_3 = \begin{pmatrix} 1 & 0 \\ 3 & 1 \end{pmatrix}$ acts by $e(1/3)$, and the Fricke involution $\begin{pmatrix} 0 & -1 \\ 9 & 0 \end{pmatrix}$ fixes $T_g$.  The fixing group, which is normal of index three in the eigengroup, contains $\Gamma_0(9)$ as an index 4 subgroup, and the Fricke involution together with $x_3y_3$ and $x_3^2y_3^2$ are representatives of the nontrivial cosets.  Armed with the above information, we can construct a table whose entries are the components of $\hat{F}$.  Rows and columns are indexed from 0 to 8 (mod 9).

\[ \begin{array}{ccccccccc}
J & T_g(\tau) & T_g(\tau) & J & T_g(\tau) & T_g(\tau) & J & T_g(\tau) & T_g(\tau) \\
T_g(\frac{-1}{\tau}) & T_g(\frac{-1}{\tau+1}) & T_g(\frac{-1}{\tau+2}) & T_g(\frac{-1}{\tau+3}) & T_g(\frac{-1}{\tau+4}) & T_g(\frac{-1}{\tau+5}) & T_g(\frac{-1}{\tau+6}) & T_g(\frac{-1}{\tau+7}) & T_g(\frac{-1}{\tau+8}) \\
T_g(\frac{-1}{\tau}) & T_g(\frac{-1}{\tau+5}) & T_g(\frac{-1}{\tau+1}) & T_g(\frac{-1}{\tau+6}) & T_g(\frac{-1}{\tau+2}) & T_g(\frac{-1}{\tau+7}) & T_g(\frac{-1}{\tau+3}) & T_g(\frac{-1}{\tau+8}) & T_g(\frac{-1}{\tau+4}) \\
J & \zeta_3 T_g(\tau) & \zeta_3^2 T_g(\tau) & J & \zeta_3 T_g(\tau) & \zeta_3^2 T_g(\tau) & J & \zeta_3 T_g(\tau) & \zeta_3^2 T_g(\tau) \\
T_g(\frac{-1}{\tau}) & T_g(\frac{-1}{\tau+7}) & T_g(\frac{-1}{\tau+5}) & T_g(\frac{-1}{\tau+3}) & T_g(\frac{-1}{\tau+1}) & T_g(\frac{-1}{\tau+8}) & T_g(\frac{-1}{\tau+6}) & T_g(\frac{-1}{\tau+4}) & T_g(\frac{-1}{\tau+2}) \\
T_g(\frac{-1}{\tau}) & T_g(\frac{-1}{\tau+2}) & T_g(\frac{-1}{\tau+4}) & T_g(\frac{-1}{\tau+6}) & T_g(\frac{-1}{\tau+8}) & T_g(\frac{-1}{\tau+1}) & T_g(\frac{-1}{\tau+3}) & T_g(\frac{-1}{\tau+5}) & T_g(\frac{-1}{\tau+7}) \\
J & \zeta_3^2 T_g(\tau) & \zeta_3 T_g(\tau) & J & \zeta_3^2 T_g(\tau) & \zeta_3 T_g(\tau) & J & \zeta_3^2 T_g(\tau) & \zeta_3 T_g(\tau) \\
T_g(\frac{-1}{\tau}) & T_g(\frac{-1}{\tau+4}) & T_g(\frac{-1}{\tau+8}) & T_g(\frac{-1}{\tau+3}) & T_g(\frac{-1}{\tau+7}) & T_g(\frac{-1}{\tau+2}) & T_g(\frac{-1}{\tau+6}) & T_g(\frac{-1}{\tau+1}) & T_g(\frac{-1}{\tau+5}) \\
T_g(\frac{-1}{\tau}) & T_g(\frac{-1}{\tau+8}) & T_g(\frac{-1}{\tau+7}) & T_g(\frac{-1}{\tau+6}) & T_g(\frac{-1}{\tau+5}) & T_g(\frac{-1}{\tau+4}) & T_g(\frac{-1}{\tau+3}) & T_g(\frac{-1}{\tau+2}) & T_g(\frac{-1}{\tau+1}) 
\end{array} \]

The function $J$ above denotes the $SL_2(\Z)$-invariant function $T_1(\tau) = j(\tau)-744 = q^{-1} +  196884q + 21493760q^2 + \dots$.  Taking the Fourier transform, we can describe the vector-valued function $F$ by another table (with the same indexing).

\[ \begin{array}{ccccccccc}
f^0 & 0 & 0 & f^3 & 0 & 0 & f^6 & 0 & 0 \\ 
0 & 0 & f^4 & 0 & 0 & f^7 & 0 & 0 & f^1 \\
0 & f^5 & 0 & 0 & f^8 & 0 & 0 & f^2 & 0 \\
f^6 & 0 & 0 & f^0 & 0 & 0 & f^3 & 0 & 0 \\
0 & 0 & f^1 & 0 & 0 & f^4 & 0 & 0 & f^7 \\
0 & f^2 & 0 & 0 & f^5 & 0 & 0 & f^8 & 0 \\
f^3 & 0 & 0 & f^6 & 0 & 0 & f^0 & 0 & 0 \\
0 & 0 & f^7 & 0 & 0 & f^1 & 0 & 0 & f^4 \\
0 & f^8 & 0 & 0 & f^2 & 0 & 0 & f^5 & 0 
\end{array} \]

The functions are given by:

\[ \begin{aligned}
f^{0} &= (J + 2T_g)/3 = q^{-1} + 65628q + 7164752q^2 + \dots \\
f^{1} &= q^{-1/9} + 34752q^{8/9} + 4530744q^{17/9} + \dots \\
f^{2} &= 4124q^{5/9} + 1057504q^{14/9} + \dots \\
f^{3} &= (J - T_g)/3 = 65628q + 7164504q^2 + \dots \\
f^{4} &= 248q^{2/9} + 213126q^{11/9} + \dots \\
f^{5} &= 248q^{2/9} + 213126q^{11/9} + \dots \\
f^{6} &= (J - T_g)/3 = 65628q + 7164504q^2 + \dots \\
f^{7} &= 4124q^{5/9} + 1057504q^{14/9} + \dots \\
f^{8} &= q^{-1/9} + 34752q^{8/9} + 4530744q^{17/9} + \dots
\end{aligned} \]

The singular terms are $f^0$ with a pole of order $1$, and $f^1 = f^8$, with poles of order $1/9$.  Since the residues are integers, $F$ satisfies the second condition of Theorem \ref{thm:prods}.  We can describe the zeros of $\Psi$ by lifting the vector-valued function above to assign functions to each vector in $M^\vee$.  Recall from section \ref{sec:vector} that we identify $M = \Z \times 9\Z \times \Z \times \Z$ and $M^\vee = \frac{1}{9}\Z \times \Z \times \Z \times \Z$.  $\Psi$ has zeros of order one along rational quadratic divisors $\lambda^\perp$ for those $\lambda \in M$ of the following form:  Any $\lambda \in M$ with $Q(\lambda) = -1$ is assigned $f^0$, and has a pole corresponding to divisors $\sigma = A \tau$ for $A \in \Gamma_0(9)$.  Any $\lambda \in M$ with $Q(\lambda) = -9$, satisfying $\lambda/9 \in M^\vee$ with $\lambda/9$ assigned $f^1$ or $f^8$ has a pole corresponding to divisors $\sigma = A\tau$ for $A \in \begin{pmatrix} 0 & -1 \\ 9 & 0 \end{pmatrix} \Gamma_0(9)$.  Any $\lambda \in M$ with $Q(\lambda) = -9$, satisfying $\lambda/3 \in M^\vee$ with $\lambda/3$ assigned $f^0$ has a pole corresponding to divisors $\sigma = A\tau$ for $A \in x_3y_3 \Gamma_0(9)$ (if $\lambda/3 \equiv (1/3,3,0,0)$ mod $M$) or $A \in x_3^2 y_3^2 \Gamma_0(9)$ (if $\lambda/3 \equiv (2/3,6,0,0)$ mod $M$).  $\Psi$ also has a product expansion at the cusp $(0,0,0,1) = (i\infty,0)$ given by:
\[p^{-1} \prod_{m > 0, n \in \frac{1}{9}\Z} (1 - p^m q^n)^{c(m,n)} \]
Here, $c(m,n)$ is given by $c_1(9mn) + c_2(mn)/3$ if $m/3$ and $3n$ are integers, and $c_1(9mn)$ otherwise, where $T_g(\tau) = \sum_{n \in Z} c_1(n) q^n$ and $\sum_{n \in Z} c_2(n) q^n = J(\tau) - T_g(\tau) = 196884q + 21493512q^2 + 864299970q^3 + 20245856256q^4 + \dots$. 

This product is the expansion of $T_g(\sigma) - T_g(-1/\tau)$.  Up to a change of grading, this product is equivalent to the Weyl-Kac-Borcherds denominator of one of the monstrous Lie algebras described in section 10 of \cite{B92}.

There is a good reason why most of the entries of $F$ are zero, and it is because $F$ descends to a vector-valued modular function for a lattice $L \times I\! I_{1,1}$ that contains $M = I\! I_{1,1}(9) \times I\! I_{1,1}$ as an index three sublattice, where $L$ is isomorphic to $\Z \times \Z$ with the quadratic form $(a,b) \mapsto 3ab + b^2$.  In particular, $F$ is supported on $L^\vee/M$, which is an index three subgroup of $M^\vee/M$.  Other elements whose McKay-Thompson series have level larger than their order (i.e., with $h>1$ in the sense of \cite{CN79}) behave similarly.

For most elements, namely those with $h=1$, the lattice quotients have the form $\Z/N\Z \times \Z/N\Z$.  However, for 3C, $L^\vee/L \cong \Z/9\Z$, and we expect that the category of representations of the fixed point vertex algebra $(V^\natural)^g$ has a braided tensor structure, with fusion algebra naturally isomorphic to $\Z[L^\vee/L]$, such that each $f^i$ equal to the graded dimension of the corresponding irreducible object.  Since it is in general quite difficult to determine the representation theory of fixed point algebras, this interpretation is still conjectural.

We can view the vector-valued function $F$ in a more holistic way, as a vector-valued character of an abelian intertwining algebra, in the sense of \cite{DL93}, and the theory of abelian intertwining algebras will still work if we replace $\langle g \rangle$ with any cyclic subgroup of $\MM$, or with a choice of many non-cyclic abelian subgroups.  There is an alternative viewpoint that extends to larger subgroups, given by vertex algebra objects (i.e., singular commutative rings) in a braided tensor category.   
We intend to show in forthcoming work that these objects are much easier to study than representations of fixed point algebras, once some conceptual hurdles are overcome.  For the 3C case, there is more than one interesting way to view the braided category in question:
\begin{enumerate}
\item The most straightforward view is that this is the category of $L^\vee/L$-graded vector spaces, where the braided monoidal structure is classified up to equivalence by the induced $\Q/\Z$-valued quadratic form.  This view can be generalized to abelian subgroups.
\item To connect this structure to the action of $\langle g \rangle$, one can view it as the Drinfeld center of the monoidal category of $\Z/3\Z$-graded vector spaces, with associator twisted by a nontrivial 3-cocycle with coefficients in $\C^\times$.  Equivalently, the braided category is the category of modules of a twisted quantum double of $\C[\Z/3\Z]$ \cite{DPR90}.  This viewpoint extends to treat nonabelian subgroups of the monster.
\item The abelian category of $\Z/3\Z$-graded vector spaces can be viewed more naturally as vector bundles on the group $\Z/3\Z$, and the twisted associator on the convolution tensor structure describes a nontrivial 2-group structure, equivalent to the based loop space of a nontrivial $K(\C^\times,2)$-torsor over $B\Z/3\Z$.  Our monoidal category is given by sheaves on this 2-group with convolution, and its center is given by either conjugation-equivariant sheaves on the 2-group, or equivalently, sheaves on the free loop space of the torsor.  If we expand our view to the abelian category $\mathcal{C}$ of all twisted modules of $V^\natural$, there is a natural strict action of a 2-group $\widetilde{\MM}$ whose structure is described up to equivalence (following Sinh's conditions - see section 8 of \cite{BL04}) by $\pi_0(\widetilde{\MM}) \cong \MM$, $\pi_1(\widetilde{\MM}) \cong \Aut(Id_\mathcal{C})$, and a nontrivial element of $H^3(\pi_0(\widetilde{\MM}), \pi_1(\widetilde{\MM}))$, where the module structure is trivial.
\end{enumerate}

The constant ambiguity in the generalized moonshine conjecture seems to arise from the nontriviality of the higher structure in $\widetilde{\MM}$, and we can see it manifested in the 3C case.  There is an action of the centralizer $C_{\widetilde{\MM}}(g)$ on the category of $g$-twisted modules, which induces an action of a $\C^\times$ central extension of $C_\MM(g)$ on any fixed irreducible $g$-twisted module, with a lift of $g$ acting by the conformal operator $e(L_0)$.  The class of the central extension is given by restriction of the loop transgression of a 3-cocycle on $\MM$ (see section 3 of \cite{W08}).  Further restriction of the 2-cocycle to the cyclic group generated by $g$ is yields a trivializable cocycle, but any trivialization is incompatible with the assignment of $e(L_0)$ as a distinguished lift of $g$.  When we pull back to the degree 3 central extension $\Z/9\Z$, the compatibility is achievable.

In general, when we deal with cyclic subgroups of $\MM$, we can trivialize the restricted 2-group structure by passing to an abelian central extension, and in the above treatment of the 3C case, we did this by essentially pretending that $g$ had order 9.  Since $Th$ has no nontrivial discrete central extensions, we can descend to an action of $\Z/9\Z \times Th$ on the irreducible twisted module. Geometrically, the nontriviality of the 3-cocycle implies that $g$-twisted modules are not canonically attached to ramified points on an $\MM$-cover of an algebraic curve, because the twisted modules are not admissible in the sense of \cite{FS04}.  Instead, one can either introduce a formalism of $\widetilde{\MM}$-covers, or attach modules to nonzero one-jets at those points.  The $e(L_0)$ incompatibility mentioned in the previous paragraph implies that the dependence on choice of 1-jet is nontrivial.  We will treat the geometric formalism more thoroughly in \cite{C}.

\section{Lie algebras}

In this section, we give a conditional proof of the generalized moonshine conjecture.  It requires strong assumptions concerning group actions on certain Lie algebras, together with a technical hypothesis concerning genus 1 correlation functions.

In the first part, we construct Lie algebras whose denominator identities give the product formulas from the previous section.  Then we introduce a notion of non-Fricke compatibility for Lie algebras, and show that the Lie algebras we constructed using McKay-Thompson series have positive subalgebras that are Fricke or non-Fricke compatible with cyclic group actions.  We introduce some assumptions for groups acting on these Lie algebras, and show that under those assumptions, the resulting characters are weakly Hecke-monic.  The weakly Hecke-monic functions are automatically genus zero in the Fricke case, but otherwise, we need an additional assumption.

\subsection{Denominator formulas}

Generalized Kac-Moody algebras were introduced by Borcherds in \cite{B88}.  The theory is very similar to that of Kac-Moody algebras, since the algebras are described by a Cartan matrix giving generators and relations, with the main distinction being the possibility of having imaginary simple roots, i.e., generators of non-positive norm. 
Like Kac-Moody algebras, they admit character formulas, but the formulas have correction terms for the imaginary simple roots.  In the case of the trivial representation, we obtain the Weyl-Kac-Borcherds denominator formula:
\[ \sum_{w \in W} \epsilon(w) w(S) = e(\rho) \prod_{\alpha \in \Delta^+} (1- e(-\alpha))^{\mathrm{mult}(\alpha)} \]
Here, $S = e(\rho) \sum_s \epsilon(s) e(s)$, where $s$ runs over all sums of pairwise orthogonal imaginary simple roots.  From a homological algebra standpoint, this formula expresses an isomorphism of virtual vector spaces $H^*(E) \cong \wedge^*(E)$ arising from the Chevalley resolution of the trivial representation.

We are interested in the rank two setting, where the root lattice is Lorentzian of signature $(1,1)$.  Since the real simple roots generate a Kac-Moody algebra, there are at most two of them, but the imaginary simple roots can have arbitrary multiplicity.  For example, one can generate a large family of such algebras by choosing an arbitrary non-negative Laurent series of the form $f(q) = q^{-1} + \sum_{n>0} a_n q^n \in q^{-1}\N[[q]]$, setting a real simple root at $(1,-1)$, and imaginary simple roots of multiplicity $a_n$ at $(1,n)$.  This algebra has a Weyl group of order two, and the Weyl-Kac-Borcherds denominator formula equates $f(p) - f(q)$ with $p^{-1}$ times a product of binomials of the form $1-p^mq^n$.

\begin{lem} \label{lem:cartan} 
Suppose we are given a product formula of one of the following two types:
\begin{enumerate}
\item $f(p) - f(q^{1/N}) = p^{-1} \prod_{m >0, n \in \frac{1}{N}\Z} (1-p^m q^n)^{c(m,n)}$, where $f(q) = q^{-1} + \sum_{n=1}^\infty c(1,n/N) q^n \in q^{-1}\N[[q]]$.
\item $f(p) - g(q^{1/N}) = p^{-1} \prod_{m >0, n \in \frac{1}{N}\Z} (1-p^m q^n)^{c(m,n)}$, where $f(q) = q^{-1} \prod_{n=1}^\infty (1-q^n)^{c(n,0)}$ satisfies $c(n,0) \in \N$ for all $n$, and $g(q) = \sum_{n \geq 0} c(1,n) q^n \in \N[[q]]$ 
\end{enumerate}
Then there exists a unique generalized Kac-Moody algebra with that product formula as denominator formula.
\end{lem}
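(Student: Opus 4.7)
The plan is to follow Borcherds's standard strategy \cite{B88, B92} for realizing an automorphic product as the denominator formula of a generalized Kac-Moody algebra, specialized to the rank-two Lorentzian setting that underlies the monster Lie algebra construction. In each case I would read the Cartan data (root lattice, Weyl vector, real and imaginary simple roots with their multiplicities) directly off the product expansion, verify that the hypotheses supply the non-negative integrality required for the imaginary simple root multiplicities, construct the GKM algebra by generators and relations following \cite{B88}, and finally check that the Weyl-Kac-Borcherds denominator formula of the resulting algebra reproduces the given product.

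For Type 1, I would take the root lattice $L = \Z \oplus \frac{1}{N}\Z$ with a Lorentzian even bilinear form whose normalization is fixed by requiring the candidate real simple root $\alpha_{-1} = (1, -1/N)$ to have norm $2$; with respect to this form the vectors $(1, n/N)$ for $n > 0$ have strictly negative norm, and the two null directions of $L$ are spanned by $(1, 0)$ and $(0, 1/N)$. The Weyl vector $\rho$ is fixed by the position of the $p^{-1}$ prefactor. The antisymmetry $f(p)-f(q^{1/N}) = -(f(q^{1/N})-f(p))$ forces the Weyl group to be $W \cong \Z/2\Z$, generated by the reflection in $\alpha_{-1}$, whose action swaps the two null directions. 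The imaginary simple roots are placed at $(1, n/N)$ for $n \geq 1$ with multiplicity equal to the Fourier coefficient $c(1, n/N)$ of $f$; the hypothesis $f \in q^{-1}\N[[q]]$ is precisely what guarantees these multiplicities are non-negative integers. Since any two such imaginary simple roots have strictly negative inner product, no two are orthogonal and the Borcherds correction term collapses to $T = 1 - \sum_{n \geq 1} c(1, n/N) e^{-(1, n/N)}$; antisymmetrizing $e^\rho T$ across $W$ then reproduces $f(p)-f(q^{1/N})$ term by term.

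For Type 2 the Weyl group is trivial (there is no real simple root), and the denominator formula collapses to $e^\rho T$. The imaginary simple roots split into two orthogonality classes: a pairwise orthogonal null family at $(n, 0)$ for $n \geq 1$ with multiplicities $c(n, 0) \in \N$, which encodes the Euler product factor of $f$, and a non-orthogonal family based at $(1, n/N)$ with multiplicities $c(1, n/N) \in \N$, which encodes $g$. The mixed orthogonality structure makes the contribution of the first family to $T$ factor multiplicatively while the second family enters linearly, and expanding $e^\rho T$ recovers $f(p) - g(q^{1/N})$.

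Uniqueness holds because the Cartan data can be recovered from the product formula itself: the Weyl vector is read off from the leading $p^{-1}$, the simple roots and their multiplicities are isolated by M\"obius inversion applied to the logarithm of the product (which separates each simple-root contribution from composite-root contributions), and Borcherds's generator-and-relations presentation then determines the GKM algebra up to isomorphism. The main technical obstacle will be the Type 2 verification: one must carefully track the contribution of null imaginary simple roots of multiplicity greater than one to Borcherds's $T$, and confirm that the resulting product-times-sum expansion matches $f(p)-g(q^{1/N})$ exactly.
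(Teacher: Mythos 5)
Your proposal is correct and follows essentially the same route as the paper: read the Cartan data (one real simple root of norm $2$ in the Fricke case, imaginary simple roots at $(1,n/N)$ with multiplicity $c(1,n/N)$, plus a mutually orthogonal null family at $(m,0)$ in the non-Fricke case, Weyl vector from the $p^{-1}$ prefactor) directly off the product, invoke Borcherds's generators-and-relations construction, and verify the Weyl–Kac–Borcherds denominator formula by noting that non-null imaginary simple roots enter the correction term $T$ only as singletons while the orthogonal null family contributes an exterior-algebra (Euler product) factor. The paper phrases this by writing out the Gram matrix explicitly and checking the recognition criteria of \cite{B95a} before quotienting the universal GKM algebra by its center, but that is the same construction in slightly different packaging; your remark that multiplicities are recovered from the product by M\"obius inversion on the logarithm is an equivalent justification of the uniqueness the paper attributes to unique factorization into binomials $(1-p^aq^b)$.
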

\begin{proof}
This proof follows a strategy similar to the construction of the monster Lie algebra in \cite{JLW95}.

We begin by choosing the root lattice, whose underlying group is $\Z \times \frac{1}{N}\Z$, and whose rational bilinear form is induced from the rational quadratic form $Q((m,n)) = mn$.  In particular, the inner product is given by $\langle (a,b/N), (c,d/N) \rangle = \frac{ad+bc}{N}$ for all integers $a,b,c,d$.  We also introduce notation for describing the characters of vector spaces graded by the root lattice: A one dimensional vector space concentrated in degree $(m,n) \in \Z \times \frac{1}{N}\Z$ has character $p^mq^n$.

For the first type of product formula, we construct a Cartan matrix by allocating $c(1,n)$ rows and columns to degree $(1,n)$, and placing $-Nm-Nn$ in every entry whose row has degree $(1,n)$ and whose column has degree $(1,m)$, for each $m, n \in \frac{1}{N}\Z$.  The resulting matrix has the following properties:
\begin{enumerate}
\item The matrix is symmetric.
\item There is exactly one non-negative diagonal entry.  It is equal to 2, and corresponds to $c(1,-1/N) = 1$.
\item Off-diagonal entries are non-positive integers.
\end{enumerate}
The three conditions for a generalized Cartan matrix listed in \cite{B95a} are therefore satisfied by our matrix.  We form the universal generalized Kac-Moody algebra described by this matrix, and let $\mathfrak{g}$ be the quotient by the center.  Since the matrix has one positive diagonal entry, there is one real simple root, and the Weyl group has order 2.  The Cartan subalgebra has dimension 2, and we may choose the Weyl vector to be $\rho = (-1,0)$, since that vector satisfies the defining property that $(\rho, \alpha_i) = -\frac{1}{2}(\alpha_i,\alpha_i)$ for all simple roots $\alpha_i$.  If $f$ has nonzero terms in positive degree, then the Weyl vector is unique.

The left side of the denominator formula for $\mathfrak{g}$ has the form $\sum_{w \in W} \epsilon(w) w(S)$.   Since all imaginary simple roots have negative norm, all sets of pairwise orthogonal imaginary simple roots have size at most one, so
\[ S = e(\rho) \sum_s \epsilon(s) e(s) = p^{-1} \sum_{n \in \frac{1}{N}\Z} c(1,n) pq^n. \]
The left side of the denominator formula is therefore $\sum_{n} c(1,n) q^n - \sum_{n} c(1,Nn) p^{Nn}$, which is the left side of the product formula.  The right side of the denominator formula for $\mathfrak{g}$ has the form 
\[ e(\rho) \prod_{\alpha \in \Delta^+} (1- e(-\alpha))^{\text{mult}(\alpha)} \]
The positive roots $\alpha \in \Delta^+$ have the property that $e(\alpha) = p^m q^n$ for some $m>0$ and $n \in \frac{1}{N}\Z$, so we can rewrite this expression as
\[ p^{-1} \prod_{m>0, n \in \frac{1}{N}\Z} (1-p^m q^n)^{\text{mult}(\alpha)}. \]
Since $f(p)-f(q^{1/N})$ has a unique product expansion in binomials of the form $(1-p^aq^b)$, the multiplicities of the roots $\alpha$ are given by $c(m,n)$.

For the second type, we construct a Cartan matrix by allocating $c(1,n)$ rows and columns to degree $(1,n)$, $c(m,0)$ rows and columns to degree $(m,0)$, and specifying the following entries:
\begin{enumerate}
\item We place the integer $-Nm-Nn$ in every place with row degree $(1,n)$ and column degree $(1,m)$.
\item We place the integer $-Nmn$ in every place with row degree $(1,n)$ and column degree $(m,0)$, or with row degree $(m,0)$ and column degree $(1,n)$.
\item We place $0$ in every place with row degree $(m,0)$ and column degree $(n,0)$.
\end{enumerate}
This matrix is symmetric, with non-positive off-diagonal entries, and it has no positive diagonals, so it satisfies the criteria of \cite{B95a}.  As before, we form the universal generalized Kac-Moody algebra described by this matrix, and let $\mathfrak{g}$ be the quotient by its center.  This algebra has no real simple roots, so its Weyl group is trivial.  As in the first case, the Cartan subalgebra is two-dimensional and the Weyl vector $\rho$ is given by $(-1,0)$.

Since the Weyl group of this algebra is trivial, the left side of the denominator formula for $\mathfrak{g}$ is just $S = e(\rho) \sum_s \epsilon(s) e(s)$.   Unlike the first case, there are imaginary simple roots of norm zero, and they are orthogonal to each other, so the sum is somewhat more complicated.  We note that the imaginary simple roots of degree $(1,n)$ for $n>0$ are not orthogonal to any others, so they appear in the sum once.  The rest of the imaginary simple roots have degree $(m,0)$ for positive integers $m$, and these roots are mutually orthogonal.  In other words, $S$ is given by adding $\sum_{n =1}^\infty -c(1,n/N)pq^{n/N}$ to an alternating sum of over all finite sets of simple roots of degree $(m,0)$.  The alternating sum can be viewed as the character of the exterior algebra whose basis is given by those simple roots, and this has the form of an infinite product.  We get:

\[ \begin{aligned}
\sum_s \epsilon(s) e(s) &= \sum_{n =1}^\infty -c(1,n/N)pq^{n/N} + \prod_{m=1}^\infty (1-p^m)^{c(m,0)} \\
&= p(f(p) - g(q^{1/N}))
\end{aligned} \]

Therefore, $S  = f(p) - g(q^{1/N})$.

The right side of the denominator formula is similar to the first case:
\[ p^{-1} \prod_{m>0, n \in \frac{1}{N}\Z} (1-p^m q^n)^{\text{mult}(\alpha)}. \]
However, in this case, there are no real simple roots, so $n$ can be restricted to range over positive numbers.  As before, uniqueness of the product formula implies the multiplicities are given by the exponents $c(m,n)$.
\end{proof}

\noindent\textbf{Remark:} If the exponents in a product are non-integral or negative, we can view the product as a denominator formula for a more general Lie algebra object in virtual vector spaces.  The monstrous Lie superalgebras in \cite{B92} correspond to the setting of supervector spaces and negative integer exponents.

There is a distinguished subclass of these generalized Kac-Moody algebras, made of those whose denominator formulas have modular forms on the left, and products with coefficients of modular forms on the right.  In contrast to the general case, it is expected that there are only finitely many examples of these.

\begin{thm} \label{thm:gkm} 
For each $g \in \MM$, the product expansion of $T_g(\sigma) - T_g(-1/\tau)$ as given in Proposition \ref{prop:prod2} is the denominator formula of a unique generalized Kac-Moody Lie algebra.
\end{thm}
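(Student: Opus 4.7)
The strategy is to feed the product expansion of Proposition \ref{prop:prod2} into Lemma \ref{lem:cartan}, verifying that the hypotheses of that lemma are met case-by-case along a Fricke/non-Fricke dichotomy.

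For each $g \in \MM$, let $N$ be the level of the eigengroup of $T_g$. Following the corollary to Theorem \ref{thm:prods}, I would assemble the family $f_{(m)}(\tau) := T_{g^m}(\tau)$ into a principally normalized Hecke-monic function on $\M_{Ell}^{\Z/N\Z}$; Proposition \ref{prop:prod2} then gives the product expansion
\[ T_g(\sigma) - T_g(-1/\tau) = p^{-1}\prod_{m > 0,\, n \in \tfrac{1}{N}\Z}(1 - p^m q^n)^{c_{m, Nn}(mn)}. \]
Next I would split the conjugacy classes of $\MM$ into Fricke (the level-$N$ Fricke involution $\tau \mapsto -1/N\tau$ fixes $T_g$) and non-Fricke, using Table 2 of \cite{CN79}. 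In the Fricke case, $T_g(-1/\tau)$ has a pole at $\tau = i\infty$, matching template (1) of Lemma \ref{lem:cartan} with $f = T_g$ read in the variable $q^{1/N}$. In the non-Fricke case, $T_g(-1/\tau)$ is regular at $\tau = i\infty$; here I would extract the $q$-constant factor $p^{-1}\prod_{m > 0}(1 - p^m)^{c_{m, 0}(0)}$ from the right-hand side and identify it with $T_g(\sigma)$, placing the identity into the form of template (2) with $g(q^{1/N}) = T_g(-1/\tau)$.

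The second step is to confirm that the exponents $c_{m, Nn}(mn)$ are non-negative integers, which is required both for the Cartan matrix entries appearing in the proof of Lemma \ref{lem:cartan} and for the root multiplicities of the resulting GKM algebra. Each exponent is an inverse Fourier transform $\frac{1}{N}\sum_{j} e(-jn)\, \hat{c}_{m, j}(mn)$ of coefficients of McKay-Thompson series (or their modular transforms) for powers of $g$. By Borcherds's solution of monstrous moonshine, each $\hat{c}_{m, j}(n)$ is a character value of a power of $g$ on a graded piece of $V^\natural$, so the inverse Fourier transform computes the dimension of a common eigenspace for $\langle g \rangle$ on that piece, which is a non-negative integer.

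With the hypotheses of Lemma \ref{lem:cartan} verified, the lemma produces the desired GKM algebra together with its uniqueness in a single stroke. The main obstacle I foresee is the non-Fricke case: one must establish the product factorization $T_g(\sigma) = p^{-1}\prod_m(1 - p^m)^{c_{m, 0}(0)}$ directly from the full denominator product, which amounts to a Borcherds-type product expansion for the McKay-Thompson series itself and requires careful accounting of the $(m, 0)$-components of the vector-valued function $F$ as the $q$-independent contribution to Proposition \ref{prop:prod2}'s Fourier expansion.
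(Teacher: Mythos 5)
Your overall skeleton (split into Fricke and non-Fricke classes and feed the product of Proposition \ref{prop:prod2} into Lemma \ref{lem:cartan}) is exactly the paper's route, but your ``second step'' has a genuine gap, and it is precisely where the real content of the theorem lies. You argue that every exponent $c_{m,Nn}(mn)$ is a non-negative integer because ``each $\hat{c}_{m,j}(n)$ is a character value of a power of $g$ on a graded piece of $V^\natural$,'' so the finite Fourier transform computes an eigenspace dimension. That is only true for the untwisted row $m\equiv 0$: for $m\not\equiv 0 \pmod N$ the component $\hat{F}_{m,j}$ is $T_{g^{(m,j,N)}}(A\tau)$ for a nontrivial $A\in SL_2(\Z)$, i.e.\ a modular transform of a McKay--Thompson series, and the statement that its coefficients are traces of $g^j$ on a $g^m$-twisted module (hence that the transform gives eigenspace multiplicities) is Hypothesis $B_g$, a form of the generalized moonshine conjecture itself --- it is not a consequence of Borcherds's theorem and cannot be invoked here without circularity. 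Note also that Lemma \ref{lem:cartan} does not actually need all exponents to be non-negative integers as a hypothesis (that comes out as a conclusion, since they end up being root multiplicities); what it needs is the simple-root data: in case (1) that $f\in q^{-1}\N[[q]]$, and in case (2) that $f$ is a product $q^{-1}\prod_n(1-q^n)^{c(n,0)}$ with $c(n,0)\in\N$ and $g(q)\in\N[[q]]$. Your proposal never correctly verifies these. Even in the Fricke case the needed input --- non-negativity of the coefficients of $T_g(\tau)$ (equivalently of $T_g(-1/\tau)=T_g(\tau/N)$) --- does not follow from your eigenspace argument, because those coefficients are traces of the nontrivial element $g$, not dimensions.

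The paper closes these gaps by different means. For Fricke $g$ it invokes the existence and uniqueness of the irreducible $g$-twisted module of $V^\natural$ from \cite{DLM00}: its character is a positive rational multiple of the expansion $T_g(-1/\tau)$, which forces the coefficients of $T_g$ to be non-negative integers, so template (1) of Lemma \ref{lem:cartan} applies. For non-Fricke $g$ --- the obstacle you correctly flag but do not resolve --- the identification $T_g(\sigma)=p^{-1}\prod_{m>0}(1-p^m)^{c_{m,0}(0)}$ with $c_{m,0}(0)\in\N$ is established not by a formal accounting of the $(m,0)$-components but by checking, case by case from Tables 2 and 3 of \cite{CN79}, that each non-Fricke $T_g$ is an eta product $\prod_i \eta(a_i\tau)^{b_i}$ satisfying the positivity condition $\sum_{a_i|k}b_i\geq 0$ for every residue class $k$, and moreover that $T_g(-1/N\tau)=k_1+k_2/T_g(\tau)$ with $k_1,k_2$ positive integers, which yields integrality and (again via \cite{DLM00} or the eta product) non-negativity of the coefficients of $T_g(-1/\tau)$. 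Without these inputs --- a nontrivial theorem about twisted modules and a finite explicit verification --- the hypotheses of Lemma \ref{lem:cartan} are not met, so your proof as written does not go through.
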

\begin{proof}
We split this question into two cases:
\begin{enumerate}
\item $g$ is Fricke, i.e., if $T_g(\tau)$ is invariant under some $\tau \mapsto -1/N\tau$.
\item $g$ is non-Fricke, i.e., if $T_g(\tau)$ is regular at zero.
\end{enumerate}

For the Fricke case, $T_g(\sigma) - T_g(-1/\tau) = T_g(\sigma) - T_g(\tau/N)$, so if we assume the coefficients of $T_g(\tau)$ (equivalently, those of $T_g(-1/\tau)$) are nonnegative integers, then the expansion has the form of the first case of Lemma \ref{lem:cartan}.  The coefficients are already known to be integers, so it suffices to show that they are nonnegative.  By \cite{DLM00}, there exists an irreducible $g$-twisted module of $V^\natural$, unique up to isomorphism, such that its character is a constant times the $q$-expansion $T_g(-1/\tau)$.  Since the character has nonnegative integer coefficients and the leading term of $T_g(-1/\tau)$ is 1, this constant must be a positive rational number, and the coefficients of $T_g(\tau)$ are nonnegative.

For the non-Fricke case, the power series expansion of $T_g(\sigma) - T_g(-1/\tau)$ has no terms with any negative powers of $q$.  By the second case of Lemma \ref{lem:cartan}, it suffices to check that $T_g(-1/\tau)$ has nonnegative integer coefficients, and that $T_g(\tau)$ is an eta-product $\eta(a_1 \tau)^{b_1} \eta(a_2 \tau)^{b_2} \dots \eta(a_n \tau)^{b_n}$ whose exponents satisfy the following positivity condition:
\begin{itemize}
\item For any residue class $k$ modulo $\mathrm{lcm}(\{a_1, \dots, a_n \})$,
\[ \underset{a_i | k}{\sum_{i \in \{1, \dots, n\} }} b_i \geq 0. \]
Here, the notation $a_i|k$ means the order of $k$ in $\Z/\mathrm{lcm}(\{a_1, \dots, a_n \})\Z$ divides the order of $a_i$.
\end{itemize}
The information necessary for verifying these conditions already exists in encoded form in Tables 2 and 3 in \cite{CN79}.  Table 2 gives a ``symbol'' for each conjugacy class, and the non-Fricke elements are those whose symbol has the form $n-$, $n|h$ for $h \neq n$, $n + s_1, s_2, \dots$ where no $s_i$ is equal to $n$, and $n|h + s_1, s_2, \dots$, where no $s_i$ is equal to $n/h$.  Table 3 gives eta-product expansions of some of the non-Fricke elements, and the rest are described in the ``Harmonies and Symmetrizations'' column.  Checking this information in each case, we find that the positivity condition is satisfied, and that $T_g(-1/N\tau) = k_1 + k_2/T_g(\tau)$ for some positive integers $k_1$, $k_2$.  The coefficients of $T_g(-1/\tau)$ are therefore integers, and either by using the results of \cite{DLM00} as in the Fricke case, or by analyzing the eta product expansion of $\frac{T_g(-1/\tau)-k_1}{k_2}$, we find that the coefficients of $T_g(-1/\tau)$ are non-negative.
\end{proof}

\subsection{Compatibility}

Let $G$ be a group, and let $g$ be an element of order $N$ in the center of $G$.  Suppose we have a collection $\mathcal{V} = \{ V^{i,j/N}_k | i, j \in \Z/N\Z, k \in \frac{1}{N}\Z \}$ of $G$-modules, such that the action of $g$ on $V^{i,j/N}_k$ is given by constant multiplication by the root of unity $e(j/N)$, and $\dim V^{i,j/N}_k$ grows subexponentially with $k$, i.e., for any $\epsilon>0$, there is some $C>0$ such that for all $i,j,k$, $\dim V^{i,j/N}_k < Ce^{\epsilon k}$.

From \cite{C10}, we say that a complex Lie algebra $E$ is Fricke compatible with the triple $(G,g,\mathcal{V})$ if:
\begin{enumerate}
\item $E$ is graded by $\Z_{> 0} \times \frac{1}{N}\Z$, with finite dimensional homogeneous components $E_{i,j}$.  We write $E = \bigoplus_{i>0, j \in \frac{1}{N}\Z} E_{i,j} p^i q^j$, where $p$ is a $(1,0)$ degree shift, and $q$ is a $(0,1)$ degree shift.  We can view this as a character decomposition under an action of a two dimensional torus $H$.
\item $E$ admits a homogeneous action of $G$ by Lie algebra automorphisms, such that we have $G$-module isomorphisms $E_{i,j} \cong V^{i,j}_{1+ij}$
\item The homology of $E$ is given by:
\begin{itemize}
\item $H_0(E) = \C$
\item $H_1(E) = \bigoplus_{n \in \frac{1}{N}\Z} V^{1,n}_{1+n} pq^n$
\item $H_2(E) = p \bigoplus_{m=1}^\infty V^{1,-1/N}_{1-1/N} \otimes V^{m,1/N}_{1+m/N} p^m$
\item $H_i(E) = 0$ for $i>2$.
\end{itemize}
\item $E_{1,-1/N} \cong V^{1,-1/N}_{1-1/N}$ is one dimensional.
\end{enumerate}

We define here the complementary notion that is suited to non-Fricke elements of the monster.

\begin{defn}
A complex Lie algebra $E$ is non-Fricke compatible with the triple $(G,g,\mathcal{V})$ if:
\begin{enumerate}
\item $E$ is graded by $\Z_{> 0} \times \frac{1}{N}\N$, with finite dimensional homogeneous components $E_{i,j}$.  We write $E = \bigoplus_{i \in \N, j \in \frac1N \N} E_{i,j} p^i q^j$, where $p$ is a $(1,0)$ degree shift, and $q$ is a $(0,1)$ degree shift.  We can view this as a character decomposition under an action of a two dimensional torus $H$.
\item $E$ admits a homogeneous action of $G$ by Lie algebra automorphisms, such that we have $G$-module isomorphisms $E_{i,j} \cong V^{i,j}_{1+ij}$
\item The homology of $E$ is given by:
\begin{itemize}
\item $H_0(E) = \C$.
\item $H_1(E) = \bigoplus_{n \in \frac{1}{N}\N} V^{1,n}_{1+n} pq^n \oplus \bigoplus_{m =1}^\infty V^{m,0}_1 p^m$.
\item $H_i(E) = \bigwedge\!^i (\bigoplus_{m=1}^\infty V^{m,0}_1 p^m)$ for $i \geq 2$.
\end{itemize}
\end{enumerate}
\end{defn}

\noindent\textbf{Remark:} The last condition matches the higher homology of $E$ with the higher homology of the abelian subalgebra $\bigoplus_{m>0} V^{m,0}_1 p^m$.

\begin{prop} \label{prop:compatible} 
Let $G = \langle g \rangle$ be a cyclic group of order $N$, and let $\phi: G \to \MM$ be a homomorphism.  Let $T_g(\tau)$ denote the McKay-Thompson series for the automorphism of $V^\natural$ described by $\phi(g)$, and assume it is invariant under $\Gamma_0(N)$.  Let $\mathcal{V}$ be the collection of $G$-modules defined by setting the dimension of $V^{i,j/N}_k$ to be the coefficient $c_{i,j}(k)$ of the vector-valued form associated to $T_g$, with $g$ acting by multiplication by $e(j/N)$.  Let $W_g$ be the generalized Kac-Moody Lie algebra whose denominator formula is the product formula for the expansion of $T_g(\sigma) - T_g(-1/\tau)$, given in Proposition \ref{prop:prod2}.  Define an action of $G$ on $W_g$ by setting $g$ to act on vectors of degree $(m,n)$ by $e(n)$, and let $E_g$ be the Lie subalgebra of $W_g$ formed by the positive root spaces.  If $g$ is Fricke, then $E_g$ is Fricke compatible with $(G,g,\mathcal{V})$, and if $g$ is non-Fricke, then $E_g$ is non-Fricke compatible with $(G,g,\mathcal{V})$.
\end{prop}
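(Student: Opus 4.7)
The plan is to verify the compatibility conditions one at a time, splitting the argument according to the two cases of Lemma \ref{lem:cartan}: the Fricke case, in which $T_g(-1/\tau) = T_g(\tau/N)$ and $W_g$ is built from the first type of product formula, and the non-Fricke case, in which $T_g$ is regular at zero and $W_g$ is built from the second type (introducing additional null imaginary simple roots at the degrees $(m,0)$).

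The conditions on grading, $G$-action, and identification of individual root spaces are essentially automatic from the construction. The Cartan matrix presentation gives $E_g$ as graded by positive roots $(m,n)$ with $m>0$ and $n \in \tfrac{1}{N}\Z$ (Fricke) or $\tfrac{1}{N}\N$ (non-Fricke), with $\dim E_{m,n}$ equal to the product exponent $c_{m,Nn}(mn)$. Letting $g$ act by $e(n)$ on degree $(m,n)$ defines a graded Lie algebra automorphism since the bracket is homogeneous. Because $G=\langle g\rangle$ is cyclic, the $G$-module $E_{m,n}$ is determined by its dimension together with the $g$-eigenvalue $e(n)$, and both agree with those of $V^{m,n}_{1+mn}$ by the very definition of $\mathcal{V}$.

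The substance of the proof is the homology computation. The cases $H_0(E_g)=\C$ (augmentation) and $H_1(E_g)$ (abelianization, spanned by the simple roots in the Cartan presentation) are immediate and match the stated formulas. For higher homology, the key input is the inner product $\langle(a,b),(c,d)\rangle = -ad-bc$ on the root lattice. In the non-Fricke case, the null simple roots $(m,0)$ with $m\geq 1$ are pairwise orthogonal, while $\langle(1,n),(m,0)\rangle = -nm \neq 0$ for all $n>0$. Consequently the null roots generate a free abelian subalgebra whose exterior algebra realizes $H_i(E_g) \cong \bigwedge^{\!i}\bigl(\bigoplus_{m\geq 1} V^{m,0}_1 p^m\bigr)$ for $i\geq 2$, while the simples at $(1,n)$ with $n>0$ contribute nothing beyond $H_1$.

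The main obstacle is the Fricke $H_2$ formula, which involves the full tensor product $V^{1,-1/N}_{1-1/N}\otimes\bigl(\bigoplus_{m\geq 1} V^{m,1/N}_{1+m/N} p^m\bigr)$ and not merely the $m=1$ term coming from the unique pair of orthogonal simple roots $(1,-1/N)$ and $(1,1/N)$. The contributions for $m>1$ involve the non-simple roots $(m,1/N)$ and must be extracted from the Weyl-Kac-Borcherds denominator formula: the order-two Weyl group $W=\{1,s_\alpha\}$ generated by reflection in the real root acts nontrivially, so the Euler characteristic of $H_*(E_g)$ is a signed sum that combines the $W$-reflected empty-sum term with the orthogonal-pair term. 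Unpacking this sum is expected to recover the tensor product above, and the vanishing of $H_i(E_g)$ for $i\geq 3$ will then follow because no three simple roots can be simultaneously pairwise orthogonal in the Fricke setting.
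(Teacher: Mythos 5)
Your handling of the grading, the $G$-action, the root-space identifications, and the low degrees $H_0$, $H_1$ is fine and matches the paper. But the higher-homology argument has a genuine gap, and it is exactly where you yourself say the "substance of the proof" lies.

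The Weyl–Kac–Borcherds denominator formula only gives the Euler characteristic $\sum_i (-1)^i \operatorname{ch} H_i(E_g)$, not the individual $H_i$. "Unpacking this sum" cannot, by itself, tell you that the degree-$(m+1,0)$ contributions live in $H_2$ and nowhere else, nor that $H_i = 0$ for $i \geq 3$ in the Fricke case. Similarly, in the non-Fricke case the assertion that the null simple roots span an abelian subalgebra whose exterior algebra "realizes" $H_i(E_g)$ for $i \geq 2$ is unjustified: there is no general reason why the Lie algebra homology of $E_g$ should agree with that of an abelian subalgebra in degrees $\geq 2$, and the statement that the $(1,n)$-simples "contribute nothing beyond $H_1$" is an assertion, not an argument.

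What the paper actually uses — and what your proposal is missing — is a Kostant/Garland–Lepowsky-type theorem (Proposition 7.9 of \cite{GL76}, adapted as in \cite{B92} to generalized Kac–Moody algebras admitting a Weyl vector). That result identifies $H_i(E_g)$ \emph{inside} $\bigwedge^i(E_g)$ as the subspace spanned by homogeneous vectors of degree $r$ satisfying $(r+\rho)^2 = \rho^2$, i.e.\ $(r, r+2\rho) = 0$. This is strictly stronger than the denominator identity: it pins down each homology degree separately. Once you have this, the rest is combinatorics of the quadratic form: with $\rho$ the Weyl vector, the admissible degrees are exactly those of the form $(m,0)$ or $(1,n/N)$; since $E_g$ lives in first-coordinate $\geq 1$, a degree $(1,n/N)$ can only appear in $\bigwedge^1$; and (in the Fricke case) a degree $(m+1,0)$ can only arise as a wedge of the one-dimensional $E_{1,-1/N}$ with something in degree $(m,1/N)$, which forces it into $\bigwedge^2$ and forces $H_i = 0$ for $i \geq 3$. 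In the non-Fricke case, $E_g$ has no negative $q$-degrees, so degree-$(m,0)$ vectors in $\bigwedge^i(E_g)$ can only be wedges of degree-$(k,0)$ vectors, giving the exterior algebra statement. Without invoking the Kostant-type identification, the case analysis you sketch does not close.
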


\begin{proof}
We split the problem into two cases:

\noindent\textbf{First:} Suppose $g$ is Fricke.  From Theorem \ref{thm:gkm}, there is one real simple root in degree $(1,-1/N)$, and rest of the positive roots are imaginary with multiplicity $c_{n,m}(-mn)$ in degree $(m,n)$ for all $m \in \N$, $n \in \frac{1}{N}\N$.  $E_g$ therefore satisfies the grading restriction condition, and $E_{1,-1/N}$ is one dimensional.

$G$ acts by Lie algebra automorphisms, because the action of $g$ is determined by grading in a multiplicative way, i.e, the action factors through the adjoint action of the torus $H$.

The action of $G$ on root spaces of $E_g$ is identified with the action on the spaces in $\mathcal{V}$, since $g$ acts by $e(n)$ on $V^{m,n}_{1+mn}$ for all $m \in \Z, n \in \frac{1}{N}\Z$.

Since $W_g$ is generalized Kac-Moody, the homology of $E_g$ is given by a formula analogous to the denominator formula.  The homology groups can be calculated by the rule at the end of section 4 in \cite{B92}, i.e., noting that Proposition 7.9 in \cite{GL76} can be adapted to the case of generalized Kac-Moody algebras admitting a Weyl vector.  This adaptation was executed in detail in the proof of Theorem 3.6 in \cite{J04}.  We find that $H_i(E_g)$ is the subspace of $\bigwedge^i(E_g)$ spanned by homogeneous vectors of degree $r$ satisfying $(r+\rho)^2 = \rho^2$, or equivalently, $(r,r + 2\rho) = 0$.  $\rho$ is the vector $(-1,0)$, so the only degrees $r$ for which the condition holds have the form $(m,0)$ or $(1,n)$ for $m \geq 0$, $n \in \frac{1}{N}\Z$.  Since $E_g$ is supported in degrees $(a,b/N)$ for $a$ a positive integer and $b \in \Z$, the only nonzero vectors of $\bigwedge(E_g)$ in degree $(1,n)$ are the simple roots in $E_g = \bigwedge^1(E_g)$.  Since $E_{1,-1/N}$ is one dimensional, the only nonzero vectors of $\bigwedge(E_g)$ in degree $(m+1,0)$ lie in $V^{m,1/N}_{1+m/N} \wedge V^{1,-1/N}_{1-1/N} \subset \bigwedge^2(E_g)$.  Therefore the homology has the necessary form for Fricke-compatibility.

\noindent\textbf{Second:} Suppose $g$ is non-Fricke.  From Theorem \ref{thm:gkm}, there are no real simple roots, and the positive roots are all imaginary with multiplicity $c_{n,m}(-mn)$ in degree $(m,n)$ for all $m \in \N$, $n \in \frac{1}{N}\N$.  $E_g$ therefore satisfies the grading restriction condition.

By the same argument as in the first case, $G$ acts by Lie algebra automorphisms, and there are G-module isomorphisms $E_{i,j} \cong V^{i,j}_{1+ij}$.

As in the Fricke case, $H_i(E_g)$ is the subspace of $\bigwedge^i(E_g)$ spanned by homogeneous vectors of degree $r$ satisfying $(r,r+2\rho) = 0$, and the only degrees $r$ for which this condition holds have the form $(m,0)$ and $(1,n)$ for $m>0$, $n \in \frac{1}{N}\N$.  Since $E_g$ is supported in degrees $(a,b/N)$ for $a$ a positive integer and $b \in \N$, the only nonzero vectors of $\bigwedge(E_g)$ in degree $(1,n)$ are the degree $(1,n)$ vectors in $E_g = \bigwedge^1(E_g)$, and the only nonzero vectors in degree $(m,0)$ are elements in the exterior algebra of $\bigoplus_{m=1}^\infty V^{m,0}_1 p^m$.  This has the form necessary to satisfy the non-Fricke condition.
\end{proof}

\begin{cor} \label{cor:zat0} 
Under the hypotheses of Proposition \ref{prop:compatible}, $T_g(-1/\tau) = \sum_{n \in \frac{1}{N}\Z} \dim V^{1,n}_{1+n} q^n$.
\end{cor}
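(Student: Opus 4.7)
The plan is to extract the coefficient of $p^1$ from the denominator formula for $W_g$ and then apply the Fricke or non-Fricke compatibility established in Proposition \ref{prop:compatible}. By Theorem \ref{thm:gkm} and Proposition \ref{prop:prod2}, the denominator formula reads
\[
T_g(\sigma) - T_g(-1/\tau) = p^{-1} \prod_{m > 0,\, n \in \frac{1}{N}\Z}(1 - p^m q^n)^{c_{m, Nn}(mn)}.
\]
First I would multiply both sides by $p$ and expand as formal power series in $p$ with coefficients in an appropriate Laurent series ring in $q^{1/N}$; this expansion is legitimate because the product converges in a neighborhood of the cusp $(i\infty, 0)$ by the proof of Proposition \ref{prop:prod2}. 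The left-hand side becomes $1 + (a_0 - T_g(-1/\tau))\,p + O(p^2)$, where $a_0$ is the $q^0$-coefficient of $T_g(\tau)$, while on the right only the factors with $m = 1$ contribute to the $p^1$ coefficient, yielding $1 - \bigl(\sum_{n \in \frac{1}{N}\Z} c_{1, Nn}(n)\, q^n\bigr) p + O(p^2)$.

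Next, I would use $a_0 = 0$, which holds because $T_g(\tau) = \mathrm{Tr}(\phi(g)\, q^{L_0 - 1}\,|\,V^\natural) = \sum_n \mathrm{Tr}(\phi(g)\,|\,V_n)\, q^n$ and the $V_0$ subspace (the weight-one piece of $V^\natural$) is zero. Equating the coefficients of $p^1$ on both sides then gives
\[
T_g(-1/\tau) = \sum_{n \in \frac{1}{N}\Z} c_{1, Nn}(n)\, q^n.
\]

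To finish, I would identify $c_{1, Nn}(n)$ as the root multiplicity of $(1, n)$ in $W_g$, hence as $\dim E_{1, n}$, and then invoke Proposition \ref{prop:compatible}, which in either the Fricke or non-Fricke case supplies a $G$-module isomorphism $E_{1, n} \cong V^{1, n}_{1+n}$. Substituting $\dim V^{1, n}_{1+n} = \dim E_{1, n}$ into the displayed identity yields the corollary. No step looks especially hard: the substantive work is already done in Proposition \ref{prop:prod2} (convergence and form of the product) and Proposition \ref{prop:compatible} (the compatibility as $G$-modules), so here I only need to read off coefficients and apply those results. The most delicate point is merely justifying that the formal expansion agrees with the analytic identity, which is guaranteed by the convergence arguments in the proof of Proposition \ref{prop:prod2}.
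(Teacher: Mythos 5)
Your proof is correct and follows essentially the same route as the paper: extract the pure-in-$q$ (equivalently, $p^1$ after multiplying by $p$) part of both sides of the denominator identity from Theorem \ref{thm:gkm}, identify $c_{1,Nn}(n)$ with $\dim E_{1,n}$, and apply the compatibility isomorphism $E_{1,n}\cong V^{1,n}_{1+n}$ from Proposition \ref{prop:compatible}. You are a bit more explicit about the vanishing of the constant term $a_0$ of $T_g$ (which is automatic since $T_g$ is principally normalized, i.e.\ $V^\natural_0 = 0$), but there is no substantive difference from the paper's argument.
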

\begin{proof}
By Proposition \ref{prop:compatible}, the part of the homology of $E_g$ concentrated in degree $(1,n)$ is $V^{1,n}_{1+n} \subset \bigwedge^1(E_g)$, in $H_1$.  By Theorem \ref{thm:gkm}, the denominator identity for $W_g$ is given by $T_g(\sigma) - T_g(-1/\tau) = p^{-1} \prod_{m>0,n \in \frac{1}{N}\Z} (1-p^m q^n)^{c_{m,Nn}(mn)}$.  The pure-in-$q$ part of the left side is $T_g(-1/\tau)$, while the pure-in-$q$ part of the right side is given by $\sum_{n \in \frac{1}{N}\Z} c_{1,Nn}(n) q^n$, which is equal to $\sum_{n \in \frac{1}{N}\Z} \dim E_{1,n} q^n$.  We find that:
\[ \begin{aligned}
T_g(-1/\tau) &= \sum_{n \in \frac{1}{N}\Z} c_{1,Nn}(n) q^n \\
&= \sum_{n \in \frac{1}{N}\Z} \dim E_{1,n} q^n \\
&= \sum_{n \in \frac{1}{N}\Z} \dim V^{1,n}_{1+n} q^n
\end{aligned} \]
\end{proof}

\subsection{Twisted denominator formulas}

In this section, $G$ is a finite group with an element $g$ of order $N$ in its center, and $\mathcal{V}$ is a collection of $G$-modules as described in the previous section.

Following \cite{C10}, we can define ``formal orbifold partition functions'' for any $h \in G$:
\[ Z(g^k,g^lh^m,\tau) := \sum_{n \in \frac{1}{N}\Z} \underset{n \in kr+\Z}{\sum_{r \in \frac{1}{N}\Z/\Z}} \mathrm{Tr}(g^lh^m|V^{k,r}_{1+n}) e(n\tau) \]
These are a priori formal power series in $q^{1/N}$, but they converge on $\HH$, by the subexponential growth condition in the definition of $\mathcal{V}$.  We refer to the collection of these functions as $Z$.

\begin{lem} 
Suppose $G = \langle g \rangle$ is a cyclic group of order $N$ equipped with a homomorphism $\pi: G \to \MM$, such that $T_g(\tau)$ has level $N$.  Let $\mathcal{V}$ be the compatible family given in Proposition \ref{prop:compatible}.  Then for all $i \in \Z$, the orbifold partition function $Z(g,g^i,\tau)$ is equal to $T_g(\frac{-1}{\tau+i})$.
\end{lem}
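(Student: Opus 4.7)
The argument is essentially a mechanical unwinding of the definition of $Z(g,g^{i},\tau)$, the main ingredient being Corollary~\ref{cor:zat0}. First I would specialize the general formula
\[ Z(g^{k},g^{l}h^{m},\tau) = \sum_{n \in \frac{1}{N}\Z} \; \underset{n \in kr + \Z}{\sum_{r \in \frac{1}{N}\Z/\Z}} \mathrm{Tr}(g^{l}h^{m}\,|\,V^{k,r}_{1+n})\, e(n\tau) \]
to the case $k=1$, $l=i$, $m=0$ (with $h = 1$, since $G = \langle g\rangle$). Taking $k=1$ forces the constraint $n \in r + \Z$ to single out a unique $r \in \frac{1}{N}\Z/\Z$, namely the reduction $\bar{n}$ of $n$ modulo $\Z$. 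This collapses the inner sum to a single term and turns $Z(g,g^{i},\tau)$ into an ordinary series indexed by $n \in \frac{1}{N}\Z$.

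Next I would use the defining property that $g$ acts on $V^{1,\bar{n}}_{1+n}$ by the scalar $e(\bar{n})$. Because $n - \bar{n} \in \Z$, we have $e(i\bar{n}) = e(in)$, so $g^{i}$ acts by the scalar $e(in)$ and
\[ \mathrm{Tr}(g^{i}\,|\,V^{1,\bar{n}}_{1+n}) = e(in)\,\dim V^{1,\bar{n}}_{1+n}. \]
Substituting and combining the exponential factor with $e(n\tau)$ via the identity $e(in)e(n\tau) = e(n(\tau + i))$ yields
\[ Z(g,g^{i},\tau) = \sum_{n \in \frac{1}{N}\Z} \dim V^{1,\bar{n}}_{1+n}\; e\bigl(n(\tau+i)\bigr). \]

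Finally I would invoke Corollary~\ref{cor:zat0}, which asserts $T_{g}(-1/\tau') = \sum_{n \in \frac{1}{N}\Z} \dim V^{1,\bar{n}}_{1+n}\, e(n\tau')$, and apply it at $\tau' = \tau + i$. The right-hand side matches our expression for $Z(g,g^{i},\tau)$, giving $Z(g,g^{i},\tau) = T_{g}\bigl(-1/(\tau + i)\bigr)$ as required.

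\textbf{Obstacle.} There is no real analytic difficulty: convergence on $\HH$ is already guaranteed by the subexponential growth hypothesis built into the definition of $\mathcal{V}$, and the substantive input (the identification of coefficients of $T_{g}(-1/\tau)$ with the root-space dimensions $\dim V^{1,\bar n}_{1+n}$) is exactly Corollary~\ref{cor:zat0}. The only care needed is bookkeeping: tracking the index conventions $r \in \frac{1}{N}\Z/\Z$ versus $n \in \frac{1}{N}\Z$, and confirming that the ambiguity $n - \bar n \in \Z$ disappears harmlessly because $i$ is an integer.
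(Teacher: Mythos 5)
Your argument is correct and is essentially the same as the paper's: both proofs rest on Corollary~\ref{cor:zat0} and on the observation that $g^i$ acts on $V^{1,n}_{1+n}$ by the scalar $e(in)$, so that $e(in)e(n\tau) = e(n(\tau+i))$. The only stylistic difference is that you unwind the definition of $Z(g,g^i,\tau)$ directly (explicitly noting that $k=1$ collapses the inner sum over $r$), whereas the paper first records $Z(g,1,\tau) = T_g(-1/\tau)$ and then substitutes $\tau \mapsto \tau + i$.
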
 
\begin{proof}
By Corollary \ref{cor:zat0},
\[ \begin{aligned}
Z(g,1,\tau) &= \sum_{n \in \frac{1}{N}\Z} \mathrm{Tr}(1|V^{1,n}_{1+n}) e(n\tau) \\
&= T_g(-1/\tau)
\end{aligned} \]
If we substitute $\tau+i$ for $\tau$, and note that $g^i$ acts on $V^{1,n}_{1+n}$ by multiplication by $e(in)$, we get:
\[ \begin{aligned}
T_g(\frac{-1}{\tau+i}) &= \sum_{n \in \frac{1}{N}\Z} \mathrm{Tr}(1|V^{1,n}_{1+n}) e(n\tau + in) \\
&= \sum_{n \in \frac{1}{N}\Z} \mathrm{Tr}(g^i|V^{1,n}_{1+n}) e(n\tau) \\
&= Z(g,g^i,\tau)
\end{aligned} \]
\end{proof}


Recall from \cite{C10} Proposition 6.1, that for any Lie algebra $E$ Fricke-compatible with the data $(G,g,\mathcal{V})$, and any $h \in G$, we had:
\[ \begin{aligned}
p^{-1} &+ \sum_{m > 0} \mathrm{Tr}(h|V^{1,-1/N}_{1-1/N}) \mathrm{Tr}(h|V^{m,1/N}_{1+m/N}) p^m - \sum_{n \in \frac{1}{N}\Z} \mathrm{Tr}(h|V^{1,n}_{n+1}) q^n \\
&= p^{-1} \exp \left( - \sum_{i > 0} \sum_{m > 0, n \in \frac{1}{N}\Z} \mathrm{Tr}(h^i | V^{m,n}_{1+mn})p^{im}q^{in}/i \right)
\end{aligned} \]

\begin{prop} 
Suppose $G$ and $g$ are as described in the previous section, and suppose $E$ is non-Fricke compatible with $(G,g,\mathcal{V})$.  Then for any $h \in G$,
\[ \begin{aligned}
p^{-1} &\exp \left( - \sum_{i>0} \sum_{m>0} \mathrm{Tr}(h^i|V^{m,0}_1) p^{im}/i \right) - \sum_{n \in \frac{1}{N}\N} \mathrm{Tr}(h|V^{1,n}_{1+n}) q^n = \\
& = p^{-1} \exp \left( - \sum_{i > 0} \sum_{m > 0, n \in \frac{1}{N}\N} \mathrm{Tr}(h^i | V^{m,n}_{1+mn})p^{im}q^{in}/i \right) .
\end{aligned} \]
\end{prop}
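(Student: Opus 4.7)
The plan is to apply the Euler--Poincar\'e principle to the Chevalley--Eilenberg resolution of the trivial $E$-module, exactly as in \cite{C08} Proposition 5, but substituting the non-Fricke homology from the definition. Since each $E_{m,n}$ is finite-dimensional and the grading on $E$ lies in $\Z_{>0} \times \frac{1}{N}\N$, all characters below live in the ring $\C[[p, q^{1/N}]][p^{-1}]$ of formal Laurent series, and the $G$-twisted characters converge for small $|p|, |q|$ by the subexponential growth hypothesis on $\mathcal{V}$.

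The Chevalley--Eilenberg complex yields an equality of virtual $G \times H$-characters
\[ \sum_{i \geq 0} (-1)^i \mathrm{ch}_h H_i(E) \;=\; \sum_{i \geq 0} (-1)^i \mathrm{ch}_h \Lambda^i E. \]
For the right-hand side, apply the identity $\det(1 - hT \mid V) = \exp\!\bigl(-\sum_{i>0} \mathrm{Tr}(h^i|V) T^i / i\bigr)$ to each graded piece, using the non-Fricke isomorphisms $E_{m,n} \cong V^{m,n}_{1+mn}$:
\[ \sum_{i \geq 0} (-1)^i \mathrm{ch}_h \Lambda^i E \;=\; \prod_{m > 0,\; n \in \frac{1}{N}\N} \det(1 - h p^m q^n \mid V^{m,n}_{1+mn}) \;=\; \exp\!\Bigl(-\sum_{i > 0} \sum_{m > 0,\, n \in \frac{1}{N}\N} \mathrm{Tr}(h^i | V^{m,n}_{1+mn}) p^{im} q^{in}/i\Bigr). \]
Multiplying by $p^{-1}$ gives the right-hand side of the claimed formula.

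For the left-hand side, substitute the non-Fricke homology. Set $W := \bigoplus_{m \geq 1} V^{m,0}_1 p^m$, so that $H_i(E) = \Lambda^i W$ for all $i \geq 2$, while $H_0(E) = \C$ and $H_1(E) = W \oplus \bigoplus_{n \in \frac{1}{N}\N} V^{1,n}_{1+n} pq^n$. The key observation is that the $\Lambda^i W$ contributions from $H_1$ (the $W$ summand) and from all $H_i$ with $i \geq 2$ combine to give the full alternating sum $\sum_{i \geq 0}(-1)^i \mathrm{ch}_h \Lambda^i W$, with only the extra $pq^n$ summand of $H_1$ sitting outside. Concretely,
\[ \sum_{i \geq 0} (-1)^i \mathrm{ch}_h H_i(E) \;=\; -\sum_{n \in \frac{1}{N}\N} \mathrm{Tr}(h|V^{1,n}_{1+n}) p q^n \;+\; \sum_{i \geq 0}(-1)^i \mathrm{ch}_h \Lambda^i W, \]
and the last sum factors as $\prod_{m \geq 1} \det(1 - hp^m \mid V^{m,0}_1) = \exp\!\bigl(-\sum_{i > 0, m > 0} \mathrm{Tr}(h^i|V^{m,0}_1) p^{im}/i\bigr)$ by the same determinant identity. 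Multiplying through by $p^{-1}$ yields the left-hand side of the desired identity.

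The one step that requires any care is justifying the Euler--Poincar\'e identity in this infinite-dimensional setting: each $H$-weight space of $\Lambda^\bullet E$ receives contributions from only finitely many $i$ because the positive grading on $E$ forces $\Lambda^i E$ to vanish in any given total degree once $i$ is large enough, so the identity holds termwise in $\C[[p, q^{1/N}]][p^{-1}]$. This is the only genuinely non-formal point, and it is handled in the same way as in the Fricke case treated in \cite{C08}.
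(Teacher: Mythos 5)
Your proposal is correct and follows essentially the same route as the paper: the paper's proof likewise identifies the two sides (after clearing the factor $p^{-1}$) with $\mathrm{Tr}(h|H_\bullet(E))$ and $\mathrm{Tr}(h|\bigwedge^\bullet(E))$, invokes the Euler--Poincar\'e argument of \cite{C08}, Proposition 5, and treats the exponentials as formal consequences of the Adams-operation (determinant) identity, which you have simply written out in detail, including the correct bookkeeping that absorbs $H_0$ and the $W$-summand of $H_1$ into $\sum_i(-1)^i\bigwedge^i W$.
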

\begin{proof}
It suffices to show that the left and right sides of the equation describe $\mathrm{Tr}(h|H_\bullet(E))$ and $\mathrm{Tr}(h|\bigwedge^\bullet(E))$, respectively.  The proof is essentially identical to \cite{C10}, Proposition 6.1.  The exponential portions are a formal consequence of the definition of Adams operations.
\end{proof}


Recall from \cite{C10} that we define equivariant Hecke operators by
\[ T_n Z(g,h,\tau) = \frac{1}{n} \sum_{ad=n, 0 \leq b < d} Z(g^d, g^{-b}h^a, \frac{a\tau+b}{d}). \]
If we apply the substitution $g \mapsto g^i, h \mapsto g^j$, we get the formula in the beginning of  Section \ref{sec:hecke-monic}.  A function $Z$ on $\Hom(\Z \times \Z, G)/G \underset{\pm \Z}{\times} \HH$ is weakly Hecke-monic for $(g,h)$ if $T_nZ(g,h,\tau)$ is a monic polynomial of degree $n$ in $Z(g,h,\tau)$ for all $n >0$.  When $g$ is Fricke, the combination of Proposition \ref{prop:compatible} in this paper and \cite{C10}, Proposition 6.2 directly imply $Z(g,h,\tau)$ is weakly Hecke-monic.

\begin{prop} \label{prop:monic} 
Let $\phi: \langle g \rangle \to \MM$ be a homomorphism, such that $T_g(\tau)$ is invariant under $\Gamma_0(|g|)$ and $\phi(g)$ is a non-Fricke element of $\MM$. Let $G$ be a group with $g$ in its center, and let $\mathcal{V}$ and $E$ be defined as $\langle g \rangle$-modules as in Proposition \ref{prop:compatible}.  Suppose that the actions of $\langle g \rangle$ on $\mathcal{V}$ and $E$ extend to actions of $G$, such that they are non-Fricke compatible.  Then for any $h \in G$, $Z$ is weakly Hecke-monic for $(g,h)$.
\end{prop}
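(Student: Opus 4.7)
My plan is to apply the non-Fricke twisted denominator identity just established and extract the Hecke operators by a logarithmic expansion in $p$, in close parallel with the Fricke argument of \cite{C08}, Proposition 6.

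Set $A(p) = \exp\bigl(-\sum_{i,m > 0}\mathrm{Tr}(h^i|V^{m,0}_1)p^{im}/i\bigr)$ and $B(q) = Z(g,h,\tau)$. The right-hand side of the twisted denominator formula factors as $p^{-1}A(p)\cdot\exp\bigl(-\sum_{i,m>0,\,n>0}\mathrm{Tr}(h^i|V^{m,n}_{1+mn})p^{im}q^{in}/i\bigr)$, so after dividing by $p^{-1}A(p)$ and taking the logarithm the identity becomes
\[ \log\bigl(1 - pB(q)/A(p)\bigr) = -\sum_{i > 0}\sum_{m > 0}\sum_{n > 0}\mathrm{Tr}(h^i|V^{m,n}_{1+mn})p^{im}q^{in}/i. \]

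Next I compare coefficients of $p^M$ on each side. On the right, this coefficient is $-\sum_{ad=M}\frac{1}{a}\sum_{n > 0}\mathrm{Tr}(h^a|V^{d,n}_{1+dn})q^{an}$. A calculation paralleling the one in the proof of Proposition \ref{prop:prod2}, with traces of $h^a$ in place of dimensions and using that $g$ is central so that $g^{-b}$ acts on $V^{d,r}$ by the scalar $e(-br)$ (this lets the $b$-sum be performed as an orthogonality relation), identifies the coefficient, up to a $\tau$-independent correction coming from the missing $n=0$ stratum, with $-M T_M Z(g,h,\tau)/M$. On the left, expanding $\log(1 - pB/A) = -\sum_{k \geq 1}p^k B(q)^k/(kA(p)^k)$ shows that the $p^M$-coefficient is $-\sum_{k=1}^M B(q)^k[p^{M-k}]A(p)^{-k}/k$, which is a polynomial in $B(q)$ of degree $M$ with leading coefficient $-1/M$, since $[p^0]A(p)^{-M} = 1$.

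Equating the two expressions and multiplying through by $-M$ realizes $M T_M Z(g,h,\tau)$ as a monic polynomial of degree $M$ in $Z(g,h,\tau)$, with the $n=0$ correction absorbed into the constant term; this is the weakly Hecke-monic property.

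The main obstacle is the combinatorial verification matching the equivariant Hecke sum $M T_M Z$ with the $p^M$-coefficient of the right-hand side. One must track how the sum $\sum_{b=0}^{d-1}e(b(n/d - r))$, which arises from the definition of $Z(g^d, g^{-b}h^a, (a\tau+b)/d)$ after extracting the $e(-br)$ eigenvalue of $g^{-b}$ on $V^{d,r}$, selects exactly the residues $r \equiv n/d \pmod 1$ in $\frac{1}{N}\Z/\Z$, and then verify that the surviving terms reassemble into $\sum_{ad=M}d\sum_{n \geq 0}\mathrm{Tr}(h^a|V^{d,n})q^{an}$. This is structurally identical to the calculation in Proposition \ref{prop:prod2}, now with $h^a$-traces carried through the algebra.
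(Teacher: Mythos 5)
Your proposal is correct and follows essentially the same route as the paper's proof. You explicitly factor the right-hand side as $p^{-1}A(p)$ times the $n>0$ part, divide through, and examine $\log(1-pB/A)$, whereas the paper takes the logarithm of the whole twisted denominator identity $\log(A(p)-pB(q))$ and isolates $p^k$-coefficients directly; since $\log(A-pB)=\log A + \log(1-pB/A)$, these are the same calculation, with your ``$\tau$-independent correction from the missing $n=0$ stratum'' being exactly $[p^M]\log A$. The remaining combinatorial verification that the $p^M$-coefficient of the right side reassembles into $-T_M Z(g,h,\tau)$ (up to that correction) via the orthogonality of the $b$-sum and the centrality of $g$ is carried out in full in the paper and matches what you describe.
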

\begin{proof}
This is very similar to Proposition 6.2 in \cite{C10}.  We multiply both sides of the twisted denominator formula and take logarithms:

\[ \begin{aligned}
\log &\left( \exp \left( - \sum_{i>0} \sum_{m>0} \mathrm{Tr}(h^i|V^{m,0}_1) p^{im}/i \right) - p\sum_{n \in \frac{1}{N}\N} \mathrm{Tr}(h|V^{1,n}_{1+n}) q^n \right) \\
&= - \sum_{i > 0} \sum_{m > 0, n \in \frac{1}{N}\N} \mathrm{Tr}(h^i | V^{m,n}_{1+mn})p^{im}q^{in}/i \\
&= - \sum_{m > 0, n \in \frac{1}{N}\N} \sum_{0<a|(m,Nn)} \frac{1}{a}\mathrm{Tr}(h^a | V^{m/a,n/a}_{1+mn/a^2}) p^m q^n \\
&= - \sum_{m > 0} \sum_{ad=m} \frac{1}{a}\sum_{n \in \frac{1}{N}\N} \mathrm{Tr}(h^a | V^{d,n}_{1+dn}) p^m q^{an} \\
&= - \sum_{m > 0} \sum_{ad=m} \frac{1}{a} \sum_{0 \leq b < d} \frac{1}{d} \sum_{n \in \frac{1}{N}\N} \underset{n \in dr+\Z}{\sum_{r \in \frac{1}{N}\Z/\Z}} e(-br) \mathrm{Tr}(h^a | V^{d,r}_{1+n}) e(br) q^{an/d} p^m \\
&= - \sum_{m > 0} \frac{1}{m} \underset{0 \leq b < d}{\sum_{ad=m}} \sum_{n \in \frac{1}{N}\N} \underset{n \in dr+\Z}{\sum_{r \in \frac{1}{N}\Z/\Z}} \mathrm{Tr} (g^{-b}h^a | V^{d,r}_{1+n}) e(n\frac{a\tau + b}{d}) p^m \\
&= -\sum_{m > 0} \frac{1}{m} \underset{0 \leq b < d}{\sum_{ad=m}} Z(g^d, g^{-b}h^a,\frac{a\tau+b}{d}) p^m \\
&= -\sum_{m > 0} T_m Z(g,h, \tau) p^m
\end{aligned} \]


Isolating the terms that are degree $k$ in $p$ on the first line yields a polynomial of degree $k$ in $Z$, with leading coefficient $-1/k$.
\end{proof}

For the Fricke-compatible case, we used the analogous result to give a strong genus zero statement, in Proposition 6.3 of \cite{C10}.  This was possible because the function $Z(g,h,\tau)$ had a pole at infinity.  Here, the question is more challenging, because the functions arising from non-Fricke compatible algebras are regular at infinity.

\subsection{Twisted modules - hypotheses}

We will use a little theory from vertex operator algebras and their twisted modules, but not in much depth.  As we remarked earlier, twisted modules were introduced in \cite{FLM88}, and were essential in the construction of the monster vertex algebra $V^\natural$.  While twisted modules have much subtle structure, we will only work with them explicitly in their capacity as vector spaces with a semisimple action of an operator $L_0$, whose eigenvalues produce a grading.  We will describe actions of groups that respect some essential additional structure, namely an internal action of the Virasoro algebra (in which $L_0$ is a distinguished element acting semisimply), and such actions will be called conformal.  Recall from before that the main player here is the monster vertex algebra $V^\natural$ endowed with a conformal action of the monster simple group $\MM$ with graded dimension $J(\tau)$ (as constructed and proved in \cite{FLM88}), and whose graded characters $T_g(\tau) := Tr(gq^{L_0-1}|V)$ are the Hauptmoduln listed by Conway and Norton \cite{CN79} (as proved by Borcherds \cite{B92}).

Let $G = \langle g \rangle \cong \Z/N\Z$ be a cyclic group with a conformal action on $V^\natural$, such that $f_{(1)}(\tau) := T_g(\tau) = Tr(gq^{L_0}|V^\natural)$ is invariant under $\Gamma_0(N)$.  For each $m|N$, we let $f_{(m)}(\tau) := T_{g^m}(\tau)$, which is invariant under $\Gamma_0(N/m)$.

From \cite{DLM00}, we have the following results:
\begin{enumerate}
\item (Proposition 12.4 and Theorem 10.3) For each integer $i$, there is a unique isomorphism class of irreducible $g^i$-twisted module of $V^\natural$.  We write it $V^\natural(g^i)$.
\item (Theorems 8.1 and 5.4) Each twisted module $V^\natural(g^i)$ admits a conformal action of $G$ compatible with the action of $G$ on $V^\natural$ (i.e., producing a $G$-equivariant module structure), such that $g^i$ acts as $e(L_0)$.
\item $\mathrm{Tr}(g^j q^{L_0-1}| V^\natural(g^i)) = C(i,j) \hat{F}_{i,j}(\tau)$ for nonzero constants $C(i,j)$.
\end{enumerate}

\noindent\textbf{Remark:} We wish to elaborate on the second result in the above list.  For each $g \in \MM$, the monster acts weakly on the category of $g'$-twisted modules, where $g'$ ranges over conjugates of $g$, by fixing the underlying vector space and conjugating the action of $V^\natural$.  This action is described in \cite{DLM00}, and works in the following way: the action of $V^\natural$ on a twisted module $V^\natural(g)$ is a map $m: V^\natural \otimes V^\natural(g) \to V^\natural(g)((z^{1/|g|}))$ satisfying some additional relations like associativity and a monodromy condition.  For any element $h \in \MM$, we form an $hgh^{-1}$ twisted module structure on $V^\natural(g)$ by $m^h(v \otimes a) := m(h\cdot v \otimes a)$.  By the uniqueness theorem (first result above), the stabilizer of the isomorphism class of an irreducible $g$-twisted module $V^\natural(g)$ is the centralizer of $g$.  By Schur's Lemma, there is therefore a canonical projective action of $C_\MM(g)$, the centralizer of $g$ in the monster, on $V^\natural(g)$, and this lifts to an action of a central extension $\C^\times . C_\MM(g)$, where $\C^\times$ acts by scalars.  In particular, each element of $C_\MM(g)$ admits a nonunique lift to an automorphism of each $V^\natural(g^i)$, but the product of two lifts of elements differs from the lift of the product of those elements by a nonzero scalar factor.  Since there are finitely many elements of $C_\MM(g)$, we may choose these scalars to be finite-order roots of unity, and the action of $\C^\times . C_\MM(g)$ descends to an action of a central extension $\widetilde{C_\MM(g)}$ of finite order.  If we restrict our attention to the cyclic group generated by $g$, we can lift to an action of a finite cyclic group that surjects to $\langle g \rangle$.  The second result of \cite{DLM00} is in fact that any choice of lift of $g^i$ on $V^\natural(g^i)$ is a scalar multiple of $e(L_0)$, but by virtue of the existence of the function $\hat{F}$, we may choose this lift to be given by an action of $G$, with the scalar multiple equal to one.  The action of $G$ on $V^\natural(g^i)$ for $(i,N) > 1$ is not uniquely defined by the $e(L_0)$ condition, but can be specified canonically by adding fusion information (to be treated later in this series).

We introduce some hypotheses on the structure of twisted modules.

\noindent\textbf{Hypothesis $A_g$:} $Tr(q^{L_0-1}| V^\natural(g^i)) = T_{g^i}(-1/\tau)$ for all $i$.

\noindent\textbf{Hypothesis $B_g$:} $C(i,j) = 1$ for all $i,j$.

Hypothesis $B_g$ is a refined form of the generalized moonshine conjecture restricted to the cyclic group $\langle g \rangle$.  Naturally, Hypothesis $A_g$ is the $j=0$ case of Hypothesis $B_g$.

\begin{lem} 
If $N$ is prime, then Hypothesis $A_g$ implies Hypothesis $B_g$.
\end{lem}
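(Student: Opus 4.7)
The plan is to verify $C(i,j) = 1$ for every $(i,j) \in (\Z/N\Z)^2$ by combining the identification $\mathrm{Tr}(g^j q^{L_0-1}|V^\natural(g^i)) = C(i,j)\hat{F}_{i,j}(\tau)$ with Hypothesis $A_g$, the $e(L_0)$-normalization of the action of $g^i$ on $V^\natural(g^i)$, and the $SL_2(\Z)$-equivariance of $\hat{F}$. The primality of $N$ will enter only in the last case.

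For $i = 0$, the module $V^\natural(g^0) = V^\natural$, so the trace is $T_{g^j}(\tau)$ by definition, and the moduli-theoretic description from Section 1 gives $\hat{F}_{0,j}(\tau) = f(1,g^j,\tau) = f_{(j)}(\tau) = T_{g^j}(\tau)$. Matching yields $C(0,j) = 1$ for every $j$, without using $A_g$. For $i \neq 0$ and $j = 0$, Hypothesis $A_g$ gives $\mathrm{Tr}(q^{L_0-1}|V^\natural(g^i)) = T_{g^i}(-1/\tau)$, while the $S$-equivariance $\hat{F}_{i,j}(-1/\tau) = \hat{F}_{j,-i}(\tau)$ specializes to $\hat{F}_{i,0}(\tau) = T_{g^{-i}}(-1/\tau)$. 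Since monster characters are rational, $T_{g^{-i}} = T_{g^i}$, so $C(i,0) = 1$.

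The remaining case $i \neq 0$, $j \neq 0$ is where the heart of the argument lies. The normalization $g^i|_{V^\natural(g^i)} = e(L_0)$ forces $g^{ki} = (g^i)^k$ to act as $e(kL_0)$, and the pointwise identity $e(kL_0)q^{L_0-1} = e^{2\pi i k}\tilde q^{L_0-1}$ on $L_0$-eigenvectors (with $\tilde q = e^{2\pi i(\tau+k)}$) gives
\[ \mathrm{Tr}(g^{ki}q^{L_0-1}|V^\natural(g^i))(\tau) = \mathrm{Tr}(q^{L_0-1}|V^\natural(g^i))(\tau+k). \]
Iterating the $T$-equivariance $\hat{F}_{i,j}(\tau+1) = \hat{F}_{i,i+j}(\tau)$ produces $\hat{F}_{i,ki}(\tau) = \hat{F}_{i,0}(\tau+k)$. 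Substituting these into $C(i,ki)\hat{F}_{i,ki}(\tau) = \mathrm{Tr}(g^{ki}q^{L_0-1}|V^\natural(g^i))(\tau)$ and comparing with the same formula for $j = 0$ evaluated at $\tau + k$ yields $C(i,ki) = C(i,0) = 1$. Primality of $N$ now finishes the argument: for $i \neq 0$, $i$ is invertible modulo $N$, so $\{ki : k \in \Z/N\Z\}$ exhausts $\Z/N\Z$, giving $C(i,j) = 1$ for every $j$.

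The main obstacle is careful bookkeeping with the $e(L_0)$-shift identity rather than a conceptual one; Hypothesis $A_g$ together with the $e(L_0)$-normalization lets us bootstrap control of $C(i,j)$ from the $j=0$ column across every $(i,ki)$, and primality of $N$ is precisely what makes this bootstrap exhaust all of $\Z/N\Z$.
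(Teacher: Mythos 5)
Your proof is correct and takes essentially the same route as the paper: both use the $T$-equivariance $\hat{F}_{i,j}(\tau) = \hat{F}_{i,0}(\tau+k)$ together with the fact that $g^{ik}$ acts as $e(kL_0)$ on $V^\natural(g^i)$, and both invoke primality exactly to solve $ik \equiv j \pmod N$ for $i \neq 0$. The one place you are more careful than the paper is the step $\hat{F}_{i,0}(\tau) = T_{g^{-i}}(-1/\tau)$: you note that Hypothesis $A_g$ gives $T_{g^i}(-1/\tau)$, so one needs the identity $T_{g^{-1}} = T_g$ (rationality of the relevant characters) to close the loop. The paper simply asserts $\mathrm{Tr}(q^{L_0-1}|V^\natural(g^i)) = \hat{F}_{i,0}$ as an immediate consequence of Hypothesis $A_g$ and folds the $j=0$ case into the general computation (taking $k=0$), whereas you isolate $j=0$ via $S$-equivariance and then bootstrap the relation $C(i,ki) = C(i,0)$ across the $k$'s. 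The structure is the same; yours is slightly more explicit.
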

\begin{proof}
From result number 2 listed above, the action of $G$ on $V(g^i)$ is given by $g^i$ acting by multiplication by $e(L_0)$.  If $i \equiv 0$ (mod $N$), this follows from the moonshine conjecture: $V(g^i) = V^\natural$, and $C(i,j) = \frac{\mathrm{Tr}(g^j q^{L_0 - 1}| V^\natural)}{\hat{F}_{0,j}(\tau)} = 1$.  Otherwise, Hypothesis $A_g$ implies $Tr(q^{L_0-1}| V^\natural(g^i)) = \hat{F}_{i,0}$.  Since $N$ is prime, there is a unique $k$ such that $ik \equiv j$ mod $N$.  We write $V^\natural(g^i)_n$ for the subspace of $V^\natural(g^i)$ on which $L_0$ acts by multiplication by $n$.
\[ \begin{aligned}
\hat{F}_{i,j}(\tau) &= \hat{F}_{i,0}(\tau + k) \\
& = \sum_{n \in \frac{1}{N}\Z} \dim V^\natural(g^i)_{n+1} e(n(\tau + k)) \\
& = \sum_{n \in \frac{1}{N}\Z} e((n+1)k) \mathrm{Tr}(1|V^\natural(g^i)_{n+1}) e(n\tau) \\
&= \sum_{n \in \frac{1}{N}\Z} \mathrm{Tr}(g^{ik} | V^\natural(g^i)_{n+1} q^n \\
&= \mathrm{Tr}(g^j q^{L_0-1} | V^\natural(g^i))
\end{aligned} \]
\end{proof}

When $N$ is composite, we need a more detailed analysis of the action of $g$ on $V^\natural(g^i)$ for $(i,N)>1$ to remove root-of-unity ambiguities.

\begin{lem} 
Let $\mathcal{V}$ be the set of $G$-modules given by setting $V^{i,j/N}_k$ to be the subspace of $V^\natural(g^i)$ on which $g$ acts by $e(j/N)$ and $L_0$ acts by $k$.  If Hypothesis $B_g$ holds, then $E_g$ (as defined in Proposition \ref{prop:compatible}) is Fricke compatible with $(G,g,\mathcal{V})$ if $g$ is Fricke, and non-Fricke compatible with $(G,g,\mathcal{V})$ if $g$ is non-Fricke.
\end{lem}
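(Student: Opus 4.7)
The plan is to reduce this lemma directly to Proposition \ref{prop:compatible} by showing that, under Hypothesis $B_g$, the twisted-module collection $\mathcal{V}$ defined in the statement carries the same dimensions and $g$-action as the combinatorial collection used in Proposition \ref{prop:compatible}, up to the standard $L_0 = 1 + (q\text{-exponent})$ shift. Once the two collections are identified, the (non-)Fricke compatibility is what Proposition \ref{prop:compatible} already asserts.

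The first step is to extract the dimensions of the spaces $V^{i,j/N}_k$ from Hypothesis $B_g$. The hypothesis gives $\mathrm{Tr}(g^j q^{L_0-1}|V^\natural(g^i)) = \hat{F}_{i,j}(\tau)$, and comparing $q^n$ coefficients yields $\mathrm{Tr}(g^j|V^\natural(g^i)_{L_0=n+1}) = \hat{c}_{i,j}(n)$. Decomposing $V^\natural(g^i)_{L_0=n+1}$ into $g$-eigenspaces rewrites the left side as $\sum_{r \in \Z/N\Z} e(jr/N)\dim V^{i,r/N}_{n+1}$, while the definition of $F$ expresses $\hat{c}_{i,j}(n)$ as the inverse Fourier transform $\sum_{r} e(jr/N) c_{i,r}(n)$. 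Linear independence of the characters $j \mapsto e(jr/N)$ then gives $\dim V^{i,r/N}_{n+1} = c_{i,r}(n)$. In particular, in degree $(i,j/N)$ the component $V^{i,j/N}_{1+ij/N}$ has dimension $c_{i,j}(ij/N)$, which matches the multiplicity of the root $(i,j/N)$ in $E_g$ read off from the denominator formula in Proposition \ref{prop:prod2} and Theorem \ref{thm:gkm}.

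Having matched the dimensions, the remaining compatibility conditions follow with little effort. The subexponential growth bound in the definition of a compatible family is inherited from the convergence of $T_{g^i}(-1/\tau)$ on $\HH$. The $G$-module isomorphism $E_{i,j} \cong V^{i,j}_{1+ij}$ holds because both sides have the same (finite) dimension and $g$ acts on both by the scalar $e(j)$, and for the cyclic group $G = \langle g \rangle$ equal dimensions plus equal scalar eigenvalues already force isomorphism. The homology conditions on $E_g$ were computed in the proof of Proposition \ref{prop:compatible} using only the root multiplicities and the Weyl-vector calculation coming from Proposition 7.9 of \cite{GL76}, so they transfer verbatim to the present $\mathcal{V}$. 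The Fricke versus non-Fricke dichotomy is dictated by $T_g$ and propagates through the construction of $E_g$ as in Proposition \ref{prop:compatible}.

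Rather than a genuine obstacle, the main care point is the index bookkeeping in the first step: one must track that the $q$-grading (by $L_0 - 1$) in the trace formula and the $L_0$-grading used to index $V^{i,j/N}_k$ differ by the canonical shift of $1$, so that $\dim V^{i,j/N}_{1+ij/N} = c_{i,j}(ij/N)$ is the correct quantity to identify with the root multiplicity of $E_g$ in degree $(i,j/N)$. Once this alignment is fixed, the lemma reduces formally to Proposition \ref{prop:compatible}.
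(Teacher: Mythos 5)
Your proposal is correct and follows essentially the same route as the paper: Hypothesis $B_g$ identifies $\hat{F}_{i,j}$ with the twisted-module traces, the Fourier transform converts this into the dimension count $\dim V^{i,j/N}_{1+ij/N} = c_{i,j}(ij/N)$ matching the root multiplicities of $W_g$, and the compatibility is then quoted from Proposition \ref{prop:compatible}. The extra details you supply (the $L_0$-shift bookkeeping, linear independence of characters, and the observation that for cyclic $\langle g\rangle$ equal dimensions with equal scalar eigenvalues force a $G$-module isomorphism) are consistent with, and merely spell out, the paper's argument.
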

\begin{proof}
By hypothesis $B_g$, for any nonnegative integers $i,j$, $\hat{F}_{i,j}$ is the trace of $g^j q^{L_0-1}$ on $V^\natural(g^i)$.  Taking the Fourier transform, we find that $c_{i,j}(ij/N)$ is equal to the dimension of both the $(i,j/N)$-root space of the Lie algebra $W_g$, and $V^{i,j/N}_{1+ij/N}$.  The compatibility then follows from Proposition \ref{prop:compatible}.
\end{proof}

\noindent\textbf{Hypothesis $C_g$:}  Let $\mathcal{V}$ be the set of $\widetilde{C_\MM(g)}$-modules given by setting $V^{i,j/N}_k$ to be the subspace of $V^\natural(g^i)$ on which $g$ acts by $e(j/N)$ and $L_0$ acts by $k$.  There is an action of $\widetilde{C_\MM(g)}$ on $W_g$ by homogeneous Lie algebra automorphisms, such that the Lie subalgebra $E_g$ of positive roots is Fricke-compatible with $(\widetilde{C_\MM(g)},g,\mathcal{V})$ if $g$ is Fricke, and non-Fricke compatible with $(\widetilde{C_\MM(g)},g,\mathcal{V})$ if $g$ is non-Fricke.

\noindent\textbf{Remark:} Because $G \subset \widetilde{C_\MM(g)}$, Hypothesis $C_g$ clearly implies Hypothesis $B_g$ (and hence $A_g$).  I believe I have a proof that for all $g \in \MM$, Hypotheses $A_g$ is true, and Hypothesis $B_g$ implies Hypothesis $C_g$.  If everything works out, the proof should appear later in this series.

\noindent\textbf{Remark:} Borcherds has pointed out that there is an alternative way to endow these Lie algebras with group actions, namely by choosing an explicit twisted denominator identity for each conjugacy class in $\widetilde{C_\MM(g)}$, in a way that is compatible with the series arising as characters of $\widetilde{C_\MM(g)}$ on the simple roots.  One needs only make this work for the first several simple roots, and then the compatibility follows from the Hecke-monic recursion.  Assuming one knows the character tables of $\widetilde{C_\MM(g)}$ for all $g \in \MM$ and all genus zero functions that are expected to appear, this can be done by computational brute force.  This strategy would yield the generalized moonshine conjecture without needing to appeal to twisted modules.  Unfortunately, no one seems to know either of the necessary pieces of information at the time of writing.

\subsection{Conditional generalized moonshine}

In this section, we choose a homomorphism $\pi: \Z/N\Z \to \MM$ sending a generator $g$ to an element of $\MM$, such that $\pi$ lifts to a homomorphism to a central extension $\widetilde{C_\MM(\pi(g))}$ of the centralizer of the image, that acts on all irreducible $g^i$-twisted modules of $V^\natural$.  In particular, this implies $T_g(\tau)$ is invariant under $\Gamma_0(N)$.

\begin{lem} \label{lem:CgHecke} 
Let $G = \widetilde{C_\MM(g)}$, and assume hypothesis $C_g$ is true.  Then $E_g$ is Fricke-compatible with $(G,g,\mathcal{V})$ if $g$ is Fricke, and non-Fricke compatible with $(G,g,\mathcal{V})$ if $g$ is non-Fricke.  Furthermore, for any $h \in G$, $Z$ is weakly Hecke-monic for $(g,h)$.
\end{lem}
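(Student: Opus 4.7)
The plan is to recognize that this lemma is essentially a bookkeeping step: the first assertion is a direct restatement of Hypothesis $C_g$ in the case $G = \widetilde{C_\MM(g)}$, and the weakly Hecke-monic assertion follows by feeding that compatibility into already-established twisted denominator formulas.

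First, I would observe that the collection $\mathcal{V}$ of $\widetilde{C_\MM(g)}$-modules defined in the setup of Hypothesis $C_g$ coincides with the $\mathcal{V}$ referenced by the lemma, since both are obtained from the joint eigenspace decomposition of the twisted modules $V^\natural(g^i)$ under $g$ (acting as $e(L_0)$) and $L_0$. Hypothesis $C_g$ then supplies an action of $\widetilde{C_\MM(g)}$ on $W_g$ by homogeneous Lie algebra automorphisms, restricting to the positive subalgebra $E_g$, and asserts that the restricted action is Fricke-compatible with $\mathcal{V}$ when $g$ is Fricke and non-Fricke compatible when $g$ is non-Fricke. This is exactly the first conclusion of the lemma.

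For the weakly Hecke-monic assertion in the Fricke case, I would invoke Proposition~6 of \cite{C08} (restated here via the twisted denominator formula recalled just before Proposition~\ref{prop:monic}). Fricke-compatibility of $E_g$ with $\mathcal{V}$ is precisely the input needed: the formal character identity for $h$ acting on $H_\bullet(E_g)$ versus $\bigwedge\!^\bullet(E_g)$ becomes, after taking logarithms, a $p$-expansion whose $p^m$ coefficient is $-T_m Z(g,h,\tau)$, and whose left-hand side is manifestly a polynomial of degree $k$ in $Z(g,h,\tau)$ with leading coefficient $-1/k$ at order $p^k$. In the non-Fricke case, I would simply apply Proposition~\ref{prop:monic} to the group $G = \widetilde{C_\MM(g)}$; its hypotheses are exactly what Hypothesis $C_g$ provides, and its conclusion is verbatim what we want.

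The one subtlety that merits explicit verification, and which I anticipate as the only nontrivial step in the bookkeeping, is that the element $g$ (with its distinguished lift acting as $e(L_0)$ on each $V^\natural(g^i)$) really lies in the center of $\widetilde{C_\MM(g)}$, so that Proposition~\ref{prop:monic} applies. This follows because $g$ is central in $C_\MM(g)$, so any lift of $h \in C_\MM(g)$ commutes with the distinguished lift of $g$ up to a scalar in $\C^\times$; comparing the eigenvalues of $g$ before and after conjugation by such a lift on any homogeneous eigenspace of $L_0$ forces the scalar to be trivial. Beyond this check, no additional obstacles arise; the real content of the lemma has been absorbed into Hypothesis $C_g$ and Proposition~\ref{prop:monic}.
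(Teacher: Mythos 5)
Your proposal is correct and takes essentially the same route as the paper: the paper's proof is the one-liner ``The compatibility follows from Proposition~\ref{prop:compatible}; the weakly Hecke-monic property follows from Proposition~\ref{prop:monic},'' and you correctly identify that the first claim is really just Hypothesis~$C_g$ restated (Proposition~\ref{prop:compatible} only covers the cyclic subgroup $\langle g \rangle$, not all of $\widetilde{C_\MM(g)}$, so your reading is in fact slightly more faithful), and that the second claim is Proposition~\ref{prop:monic} in the non-Fricke case together with Proposition~6 of \cite{C08} in the Fricke case, exactly as the paper notes in the sentence preceding Proposition~\ref{prop:monic}. Your added check that $g$ is central in $\widetilde{C_\MM(g)}$---needed for the twisted denominator machinery to apply, and verified by noting that the distinguished lift $e(L_0)$ commutes with any lift of a centralizing element---is a reasonable point of care that the paper leaves implicit.
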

\begin{proof}
The compatibility follows from Proposition \ref{prop:compatible}.  The weakly Hecke-monic property follows from Proposition \ref{prop:monic}.
\end{proof}

By Proposition 6.3 in \cite{C10}, this implies $Z(g,h,\tau)$ is a congruence genus zero function if $g$ is Fricke.  This is not sufficient to prove the generalized moonshine conjecture with our current state of knowledge, since we do not have such control for non-Fricke $g$, and we do not have the $SL_2(\Z)$-compatibility result.  Dong, Li, and Mason give a weak form of compatibility in the following sense: Let $V$ be a $C_2$ cofinite vertex operator algebra, let $(g,h)$ be a commuting pair of automorphisms with $|g| = T, |h| = T_1$, and let $M(T,T_1)$ be the commutative ring of holomorphic modular forms (non-negative weight, regular at cusps) for the group $\{ \binom{ab}{cd} \in SL_2(\Z) | b \in T\Z, c \in T_1\Z \}$.  There is then a space $C_1(g,h)$ of functions $S: (V \otimes M(T,T_1)) \times \HH$ that contains traces of twisted modules and admits ``slash'' operations, such that for any $S \in C_1(g,h)$ and $\binom{ab}{cd} \in SL_2(\Z)$, $S|_{\binom{ab}{cd}} \in C_1(g^a h^c, g^bh^d)$.  If $V$ is holomorphic and $g$-rational, then Theorem 10.1 of \cite{DLM00} implies $C_1(g,h)$ is spanned by these traces on twisted modules, and this yields a compatibility up to a constant, as predicted in the generalized moonshine conjecture.  Unfortunately, $g$-rationality seems to be quite difficult to prove in general.

We make the following conjecture for any $C_2$-cofinite vertex operator algebra $V$ with no negative weights, with $g$ and $h$ commuting finite order conformal automorphisms.  

\noindent\textbf{Hypothesis D:}  The space $C_1(g,h)$ (defined in section 5 of \cite{DLM00}) is spanned by a finite set of pseudo-traces (defined in section 4.1 of \cite{M04}) of lifts of $h$ acting on $g$-twisted Verma modules.  Furthermore, the subspace of functions with no logarithmic terms (i.e., powers of $\tau$) is spanned by traces of lifts of $h$ acting on irreducible $g$-twisted modules.

This hypothesis seems to be reasonable, in the sense that it is the ``least common generalization'' of Theorem 10.1 in \cite{DLM00} and theorem 5.5 in \cite{M04}.  We hope to treat this in later work - most of the proof in \cite{M04} transfers to the twisted setting, but there are significant technical hurdles.  In this paper, we are interested in the case $V = V^\natural$, where Theorem 10.3 in \cite{DLM00} implies the space of traces is one-dimensional.

\begin{thm} \label{thm:condmoon} 
Suppose hypothesis $C_g$ holds for all $g \in \MM$ and hypothesis $D$ holds for $V^\natural$ and all commuting pairs $(g,h)$.  Then the generalized moonshine conjecture is true (where $SL_2(\Z)$-compatibility is only given up to a nonzero constant).
\end{thm}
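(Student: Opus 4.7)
The plan is to define $Z(g,h,\tau)$ as a graded trace on the irreducible $g$-twisted $V^\natural$-module, verify the easier axioms of the generalized moonshine conjecture directly from the construction, use Hypothesis $D$ to extract the $SL_2(\Z)$-compatibility up to a constant, and then invoke Hypothesis $C_g$ together with the weakly-Hecke-monic machinery of \cite{C08} to secure the genus-zero property.

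Concretely, for each commuting pair $(g,h)$ in $\MM$, set
\[ Z(g,h,\tau) := \mathrm{Tr}(\tilde h\, q^{L_0-1} | V^\natural(g)), \]
where $V^\natural(g)$ is the unique irreducible $g$-twisted module from \cite{DLM00} and $\tilde h$ is the lift of $h$ to the action of $\widetilde{C_\MM(g)}$ supplied by Hypothesis $C_g$. Axiom (3) of the conjecture is then definitional; axiom (1) follows because any $k \in \MM$ intertwines $V^\natural(g)$ with $V^\natural(kgk^{-1})$, preserving the trace; and axiom (5) holds because $V^\natural(1) = V^\natural$ yields $Z(1,1,\tau) = J(\tau)$ tautologically, while Corollary \ref{cor:zat0} pins $Z(g,1,\tau) = T_g(-1/\tau)$, which equals $J(\tau)$ only when $g = 1$, together with Borcherds's theorem giving the analogous statement for $Z(1,h,\tau) = T_h(\tau)$.

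For axiom (2), apply Hypothesis $D$ to $V^\natural$ and the pair $(g,h)$. Since $V^\natural$ is holomorphic and $V^\natural(g)$ is the unique irreducible $g$-twisted module, the subspace of $C_1(g,h)$ with no logarithmic terms collapses to the one-dimensional line spanned by $Z(g,h,\tau)$. The slash action of $\binom{a\;b}{c\;d} \in SL_2(\Z)$ sends $C_1(g,h)$ to $C_1(g^ah^c, g^bh^d)$ and preserves the log-free subspace, which by the same hypothesis applied to the new pair is also one-dimensional and spanned by $Z(g^ah^c, g^bh^d, \tau)$. Hence there is a nonzero scalar $\gamma$ with $Z(g,h,\tau)|_{\binom{a\;b}{c\;d}} = \gamma \cdot Z(g^ah^c, g^bh^d, \tau)$, which is exactly the $SL_2(\Z)$-compatibility up to a constant.

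For axiom (4), I split into the Fricke and non-Fricke cases for $g$. When $g$ is Fricke, Hypothesis $C_g$ makes $E_g$ Fricke-compatible with the family of twisted modules, so Proposition 6 of \cite{C08} shows that $Z(g,h,\tau)$ is weakly Hecke-monic, and Proposition 7 of loc.~cit.~then forces $Z(g,h,\tau)$ to be either constant or invariant under a congruence genus-zero group. When $g$ is non-Fricke, I use the $SL_2(\Z)$-compatibility from the previous paragraph to transport $(g,h)$ to a pair $(g^ah^c, g^bh^d)$ whose first coordinate is Fricke, reducing to the case already handled; when $\langle g,h\rangle$ is cyclic one can bypass this entirely and appeal to \cite{DLM00}. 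The main obstacle is the combinatorial claim that every $SL_2(\Z)$-orbit of commuting pairs in $\MM$ with non-Fricke first entry contains a representative with Fricke first entry, i.e.~that $\langle g,h\rangle$ always contains an element of the form $g^ah^c$ with $\gcd(a,c) = 1$ that is Fricke. This needs to be verified against the Conway-Norton tables \cite{CN79}; in the residual situation where no such element exists, one must instead combine Proposition \ref{prop:monic} with the one-dimensionality forced by axiom (2) to pin down $Z(g,h,\tau)$ inside a known genus-zero family, which is the most delicate step of the argument.
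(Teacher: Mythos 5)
Your overall structure tracks the paper's, but you skip the single hardest step of the argument. In your treatment of axiom (2) you write that the slash action of $\binom{a\;b}{c\;d}$ ``preserves the log-free subspace,'' and from there everything is formal. That claim is exactly what must be \emph{proved}, and there is no a priori reason it is true: Hypothesis $D$ says $C_1(g,h)$ is spanned by pseudo-traces, which in general do carry logarithmic terms, and slashing an honest trace by a modular transformation lands you somewhere in $C_1(g^a h^c, g^b h^d)$ with no immediate control on which basis elements appear. The paper closes this gap with a degree-growth argument: Hypothesis $D$ gives a finite upper bound $M$ on the power of $\tau$ appearing in any expansion, and if $Z(g,h,\tau)|_\gamma$ had a nontrivial $\tau^{i_0}$ term with $i_0 > 0$, then the weakly Hecke-monic identity from Lemma \ref{lem:CgHecke} would force $T_p Z(g,h,\tau)|_\gamma$ --- a monic degree-$p$ polynomial in $Z(g,h,\tau)|_\gamma$ --- to carry a $\tau^{p i_0}$ term, while each summand in $p T_p Z$ lies in some $C_1(g',h')$ (after an affine coordinate change, which leaves the $\tau$-degree invariant) and is therefore bounded in $\tau$-degree by $M$. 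Taking $p$ large yields a contradiction, so no logarithmic terms occur, and only then does Hypothesis $D$ give the constant $\gamma$. This is where Hypothesis $C_g$ enters the compatibility argument --- not just in the genus-zero step --- and it is absent from your sketch.

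You also mislabel the residual non-Fricke case as ``the most delicate step.'' When no coprime $a,c$ make $g^a h^c$ Fricke, every $Z(g^a h^c, g^b h^d, \tau)$ is regular at infinity (the relevant twisted modules have strictly positive conformal weight), so by the $SL_2(\Z)$-compatibility already established, $Z(g,h,\tau)$ is regular at every cusp of a finite-level congruence subgroup and is therefore constant. There is no need to ``pin down $Z(g,h,\tau)$ inside a known genus-zero family'' or to check the Conway--Norton tables for this dichotomy; the conjecture's axiom (4) explicitly allows the constant alternative, and that is precisely what it is there for. Your idea of vaguely combining Proposition \ref{prop:monic} with one-dimensionality does not substitute for this observation. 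The remaining pieces of your proposal --- defining $Z$ as a trace on $V^\natural(g)$, axioms (1), (3), (5), and the Fricke case of axiom (4) via Lemma \ref{lem:CgHecke} and Proposition 7 of \cite{C08} --- are consistent with what the paper does.
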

\begin{proof}
By Theorem 8.1 of \cite{DLM00}, $Z(g,h,\tau)$ is a specialization of an element of $C_1(g,h)$, so for any $\binom{ab}{cd} \in SL_2(\Z)$, $Z(g,h,\frac{a\tau+b}{c\tau+d})$ is a specialization of an element in $C_1(g^a h^c,g^b h^d)$.  From the finiteness assertion in Hypothesis D, there is an upper bound $M \geq 0$ on the degree of $\tau$ in expansions of $Z(g,h,\tau)$ at any cusp.

By Theorem 6.5 of \cite{DLM00}, the expansion of $Z(g,h,\frac{a\tau+b}{c\tau+d})$ has the form
\[ \sum_{i=0}^m \sum_{j = 0}^n \sum_{k=0}^\infty S_{i,j,k}q^{k+r_j} (2 \pi i \tau)^i. \]
We wish to show that $S_{i,j,k} = 0$ for $i>0$.  Suppose there are $j,k$ and $i>0$ such that $S_{i,j,k} \neq 0$, let $i_0$ be the largest value of $i$ such that this holds.  Let $j_0$ and $k_0$ be such that $S_{i_0,j_0,k_0}$ is nonzero, and $k_0 + r_{j_0}$ is minimal with respect to this property.  By Lemma \ref{lem:CgHecke}, 
\[ pT_pZ(g,h,\frac{a\tau+b}{c\tau+d}) = Z(g,h^p,\frac{pa\tau+pb}{c\tau + d}) + \sum_{i=0}^{p-1} Z(g^p,g^{-i}h,\frac{a\tau+b}{pc\tau+pd}+i) 
\]
is a monic polynomial of degree $p$ in $Z(g,h,\frac{a\tau+b}{c\tau+d})$, and therefore has a nontrivial power of $\tau$ of order $pi_0$.  Each of the summands on the right side of the equation is a specialization of an element of some $C_1(g',h')$, possibly after an affine coordinate change.  Affine coordinate changes fix the order of $\tau$ in the expansion, so the sum on the right side therefore has $\tau$ of order bounded above by $M$.  Since $pi_0>M$ for sufficiently large $p$, this is a contradiction.

The expansion of $Z(g,h,\frac{a\tau + b}{c\tau+d})$ therefore has no logarithmic terms, so by Hypothesis D, it is a nonzero constant multiple $Z(g^ah^c,g^bh^d,\tau)$.  This proves the $SL_2(\Z)$-compatibility up to a constant.  If there exist coprime integers $a,c$ such that $g^ah^c$ is Fricke, then Lemma \ref{lem:CgHecke} combined with Proposition 6.3 of \cite{C10} implies $Z(g^ah^c, g^bh^d,\tau)$ is a genus zero function of finite level for any integers $b,d$ satisfying $ad-bc=1$.  In this case, the $SL_2(\Z)$-compatibility implies $Z(g,h,\tau)$ is also a genus zero function of finite level.  If the integers $a$ and $c$ don't exist, then $Z(g,h,\tau)$ is regular at all cusps, and is therefore constant.

It remains to show that $Z(g,h,\tau) = J(\tau)$ if and only if $g=h=1$.  The leftward implication is clear - $J$ is the character of $V^\natural$.  The rightward implication is a consequence of the fact that the irreducible $g$-twisted modules of $V^\natural$ have strictly positive degree for $g \neq 1$, combined with the fact that nontrivial McKay-Thompson series have level greater than one.
\end{proof}

\noindent\textbf{Remark:} We don't need the full force of Hypothesis D, since the only part of the pseudo-trace statement that we use is the assertion that the powers of $\tau$ in the expansion of elements of $C_1(g,h)$ are bounded above.

\noindent\textbf{Remark:} This proof produces a slightly stronger statement of generalized moonshine: the constant ambiguity in the $SL_2(\Z)$-compatibility can be made a root of unity for the non-constant functions, because weakly Hecke-monic functions have poles with unit norm residue.  Unfortunately, we do not have enough information to restrict the ambiguity to a 24th root of unity.

\section{Open problems}

\begin{enumerate}

\item Given a completely replicable modular function $f$, there is a canonical minimal cyclic group $G$ for which $f$ is the restriction of a principally normalized Hecke-monic function on $\M_{Ell}^G$.  We could call this function the ``Hecke-monic envelope'' of $f$, since is it unique with respect to these conditions.  Calculations of Queen and Norton have produced functions that show up as the characters $Z(g,h,\tau)$ in generalized moonshine, and many are $PSL_2^+(\Q)$-transformations of functions that are replicable but not completely replicable.  These are attached to Hecke-monic functions on $\M_{Ell}^G$ for $G$ an abelian group that is not necessarily cyclic and not necessarily uniquely determined.  Is there a characterization of the abelian groups that can be attached to either the replicable modular functions, or the functions that show up as $Z(g,h,\tau)$?

\item The McKay-Thompson series satisfy the property that their expansions at zero have non-negative coefficients, and this is essential for constructing Lie algebras, since simple roots have non-negative multiplicity.  Non-negativity also holds for many non-monstrous completely replicable functions, such as 9a and 49a (where functions are named following the conventions in \cite{ACMS92}), but others have negative coefficients and therefore cannot be used to generate Lie algebras.  One obvious problem is: classify products with non-negative integer exponents.  While the existence of Lie algebras does not distinguish monstrous from non-monstrous functions, there is a slightly stronger condition conjecturally satisfied by McKay-Thompson series (implied by hypothesis $B_g$), namely that the vector-valued form $F$ have non-negative integer coefficients. We have not checked this condition for non-monstrous functions, but it is a weak form of demanding existence of twisted modules, i.e., assuming a uniqueness conjecture in \cite{FLM88}, McKay-Thompson series are distinguished by the property that they are characters of a conformal automorphism of a holomorphic vertex operator algebra with central charge 24, whose identity character is given by $J$.  Some non-monstrous cases of non-negativity may come from automorphisms of other vertex operator algebras, such as those arising from fixed-point-free automorphisms of Niemeier lattices.

\item This is not a particularly new question, but it would be nice to have a functorial characterization of congruence genus zero quotients of $\HH$ via moduli of structured elliptic curves.  For example, elements in the conjugacy class pA in the monster produce McKay-Thompson series that are invariant under $\Gamma_0^+(p)$, and the resulting modular curve $X_0^+(p) = \HH/\Gamma_0^+(p)$ parametrizes unordered pairs of elliptic curves, with dual degree $p$ isogenies between them.  Ogg pointed out in 1975 that the genus zero property of $X_0^+(p)$ characterizes those primes dividing $|\MM|$.  Substantial progress has been made in \cite{DF09} using moduli of solid tori to characterize McKay-Thompson series, and giving a characterization of genus zero quotients in terms of Rademacher sums.  

\item When the exponents in a product formula are not in $\N$, we can no longer describe the identity as the denominator formula of a Lie algebras in vector spaces.  However, if we have a suitable homomorphism from the Grothendieck semiring of a braided tensor category $\mathcal{C}$ to the complex numbers, we can sometimes employ the denominator identity to describe a Lie algebra object in $\mathcal{C}$ by checking that a pulled-back identity still holds.  For example, the monstrous superalgebras constructed in \cite{B92} are exactly of this form, when the category is that of super vector spaces.  Are there interesting algebraic objects attached to more general products (such as those containing cyclotomic integers, giving rise to cyclic anyons), and do they arise in physical contexts?

\item The correspondence between vector-valued modular functions for lattices and infinite products for $O(2,2)$ is far from a bijection, and its precise nature is far from clear.  For example, when a McKay-Thompson series $T_g(\tau)$ has a regular cusp, then there are infinite product expansions of $T_g(\sigma) - T_g(\tau)$ involving such cusps that do not arise from lifting the vector-valued functions we introduced.  In fact, one can use the exponents in a product to reverse-engineer Hecke-monic functions that are not principally normalized.  Are there structures in conformal field theory that produce these residual products?  We don't seem to have a enough a priori information in the Lie superalgebras to reconstruct all of the operator product expansions.  

\item Borcherds products like the ones in this paper appear in the string theory literature in many places, but with several different physical interpretations that are not obviously equivalent.  One often sees connections between these algebras and BPS states, but there doesn't seem to be a BPS theory explicitly attached to $V^\natural$.  Products in signature $(2,2)$ are also mentioned in \cite{D99}, with an explanation that string theory partition functions associated to a torus depend not only on conformal class but on worldsheet volume and a two-form field.  This expands the $T$-duality group from $SL_2(\Z)$ to $SO_{2,2}(\Z)$, which is commensurable with the symmetry groups of the automorphic functions we constructed.  This is somewhat related to the previous question, in the sense that one might expect the space of physical states of the $V^\natural$ conformal field theory to be related to a residual Lie superalgebra by a duality.  There is yet another interpretation of these Lie algebras as second quantizations in string field theory, arising from multiple bosonic string states (see, e.g., \cite{DMVV97} for generalities and \cite{DF09} for discussion of our specific case).  The product formulas arising in permutation orbifold theory (e.g., in \cite{T08}) are very similar to those in this paper, but with minus signs on the coefficients, i.e., the products are reciprocals of what we see here.  The reciprocal characters correspond to replacing $\bigwedge E$ in the Chevalley resolution with the symmetric algebra, suggesting a role for Koszul duality, and both the exterior and symmetric algebras on $E$ (with a suitable shift) have canonical cocommutative DG coalgebra structures, but it's not clear if they tell us anything useful.  Are these pictures related to each other?

\item We can think of the Lie algebras in this paper as categorifications of their (twisted) denominator formulas.  Lurie has asked what it means to categorify again, possibly with a view toward 2-equivariant elliptic cohomology and Hopkins-Kuhn-Ravenel characters.  It's not clear what sort of algebraic gadget we should get, but it ought to involve as an intermediate step the whole collection of Lie algebras at once as a manifestation of the equivariant conformal field theory.  Kremnizer has suggested some quiver-theoretic constructions, but I have been unable to find natural actions of finite groups on the resulting structures.



\end{enumerate}


\begin{thebibliography}{Alexander-Cummins-McKay-Simons-1992}

\bibitem[Alexander-Cummins-McKay-Simons-1992]{ACMS92}
D. Alexander, C.Cummins, J. McKay, C. Simons,
Completely Replicable Functions.
\textit{Groups, Combinatorics, and Geometry, Durham Symposium, 1990} London Math. Soc. Lecture Note Ser. \textbf{165}, Cambridge Univ. Press, Cambridge, 1992, 87--98.

\bibitem[Baez-Lauda-2004]{BL04}
J. C. Baez, A. D. Lauda,
Higher-Dimensional Algebra V: 2-Groups,
\textit{Theory and Applications of Categories} \textbf{12} (2004), 423--491.

\bibitem[Borcherds-1988]{B88}
R. Borcherds,
Generalised Kac-Moody algebras,
\textit{J. Algebra} \textbf{115} no. 2 (1988), 501--512.

\bibitem[Borcherds-1992]{B92} 
R. Borcherds,
Monstrous moonshine and monstrous Lie superalgebras,
\textit{Invent. Math.}  \textbf{109} (1992) 405--444.

\bibitem[Borcherds-1995a]{B95a}
R. Borcherds,
A characterization of generalized Kac-Moody algebras,
\textit{J. Algebra} \textbf{174} (1995) 1073--1079.

\bibitem[Borcherds-1995b]{B95b}
R. Borcherds,
Automorphic forms on Os+2,2(R) and infinite products,
\textit{Invent. Math.} \textbf{120} (1995) 161--213.

\bibitem[Borcherds-Ryba-1995]{B95c}
R. Borcherds, A. J. Ryba,
Modular Moonshine II
\textit{Duke Math. J.} \textbf{83} (1996), no. 2, 435-459

\bibitem[Borcherds-1998a]{B98}
R. Borcherds,
Automorphic forms with singularities on Grassmannians,
\textit{Invent. Math.}  \textbf{132} (1998), no. 3, 491--562.

\bibitem[Borcherds-1998b]{B98b}
R. Borcherds,
Modular Moonshine III
\textit{Duke Math. J.} \textbf{93} (1998), no. 1, 129--154.

\bibitem[Bruinier-2002]{B02}
J. Bruinier,
\textit{Borcherds Products on $O(2,l)$ and Chern Classes of Heegner Divisors},
Lecture Notes in Mathematics  \textbf{1780} Springer-Verlag, Boston (2002).

\bibitem[Carnahan-2010]{C10}
S. Carnahan,
Generalized Moonshine I: Genus zero functions,
\textit{Algebra and Number Theory} \textbf{4}:6 (2010) 649--679.

\bibitem[Carnahan-20??]{C}
S. Carnahan,
 Generalized Moonshine III: Equivariant conformal blocks,
\textit{In preparation}.

\bibitem[Cohen-Griess-1987]{CG87}
A. Cohen, R. Griess,
 On finite simple subgroups of the complex Lie group of type $E_8$,
\textit{Proc. Symp. Pure Math.} \textbf{47} (1987) 367--405.

\bibitem[Conrad-20??]{C?}
B. Conrad,
{\em Modular forms, Cohomology, and the Ramanujan Conjecture}
preprint

\bibitem[Conway-Norton-1979]{CN79} 
J. Conway, S. Norton,
Monstrous Moonshine,
\textit{Bull. Lond. Math. Soc.}  \textbf{11} (1979) 308--339.

\bibitem[ATLAS]{CWCNP85}
J. Conway, R. Wilson, R. Curtis, S. Norton, R. Parker,
\textit{Atlas of finite groups}
Clarendon Press, Oxford (1985).

\bibitem[Cummins-Gannon-1997]{CG97}
C. Cummins, T. Gannon,
Modular equations and the genus zero property of modular functions,
\textit{Invent. Math.}  \textbf{129} (1997), 413--443.

\bibitem[Deligne-1969]{D69}
P. Deligne,
{\em Formes modulaires et representations $\ell$-adiques}
Seminaire Bourbaki {\em 355} (1969) 139--172.

\bibitem[Dijkgraaf-1999]{D99}
R. Dijkgraaf,
Fields, Strings, Matrices, and Symmetric Products,
\textit{ArXiv preprint:} hep-th/9912104.

\bibitem[Dijkgraaf-Moore-Verlinde-Verlinde-1997]{DMVV97}
Dijkgraaf, G. Moore, E. Verlinde, and H. Verlinde,
Elliptic Genera of Symmetric Products and Second Quantized Strings,
\textit{Commun. Math. Phys.} \textbf{185} (1997) 197--209.

\bibitem[Dijkgraaf-Pasquier-Roche-1990]{DPR90}
R.Dijkgraaf, V. Pasquier, P. Roche,
Quasi Hopf algebras, group cohomology and orbifold models,
\textit{Nucl. Phys. B} (Proc. Suppl.)\textbf{18}B (1990) 60--72.

\bibitem[Dixon-Ginsparg-Harvey-1988]{DGH88} 
L. Dixon, P. Ginsparg, J. Harvey,
Beauty and the Beast: Superconformal Symmetry in a Monster Module
\textit{Commun. Math. Phys.}  \textbf{119} (1988) 221--241.

\bibitem[Dong-Lepowsky-1993]{DL93}
C. Dong, J. Lepowsky,
\textit{Generalized vertex algebras and relative vertex operators},
Progress in Mathematics \textbf{112} Birkhäuser Boston, Inc., Boston, MA, (1993).

\bibitem[Dong-Li-Mason-1996]{DLM96}
C. Dong, H. Li, G. Mason,
Some twisted modules for the moonshine vertex operator algebra
\textit{Contemporary Math}.  \textbf{193} (1996) 25--43.

\bibitem[Dong-Li-Mason-2000]{DLM00} 
C. Dong, H. Li, G. Mason,
Modular invariance of trace functions in orbifold theory,
\textit{Comm. Math. Phys.}  \textbf{214} (2000) no. 1, 1--56.

\bibitem[Duncan-Frenkel-2009]{DF09}
J. Duncan, I. Frenkel,
Rademacher Sums, Moonshine, and Gravity,
\textit{ArXiv preprint:} 0907.4529.

\bibitem[Ferenbaugh-1993]{F93}
C. Ferenbaugh,
The genus zero problem for $n|h$-type groups,
\textit{Duke Math. J.}  \textbf{72} (1993) no. 1, 31--63.

\bibitem[Frenkel-Szczesny-2004]{FS04}
E. Frenkel, M. Szczesny,
Twisted modules over vertex algebras on algebraic curves,
\textit{Adv. Math.} \textbf{187}  (2004),  no. 1, 195--227.

\bibitem[Frenkel-Lepowsky-Meurman-1988]{FLM88} 
I. Frenkel, J. Lepowsky, A. Meurman,
\textit{Vertex operator algebras and the Monster},
Pure and Applied Mathematics \textbf{134} Academic Press, Inc., Boston, MA (1988).

\bibitem[Garland-Lepowsky-1976]{GL76}
H. Garland, J. Lepowsky,
Lie algebra homology and the Macdonald-Kac formulas,
\textit{Invent. math.}  \textbf{34} (1976), 37--76.

\bibitem[Grisenko-Nikulin-1997]{GN97}
V. Gritsenko, V. V. Nikulin,
Siegel automorphic form corrections of some Lorentzian Kac-Moody Lie algebras,
\textit{Amer. J. Math.} \textbf{119} (1997) no. 1, 181--224.

\bibitem[Gritsenko-Nikulin-1998]{GN98}
V. Gritsenko, V. V. Nikulin,
Automorphic forms and Lorentzian Kac-Moody algebras I, II
\textit{Internat. J. Math.} \textbf{9} (1998) no. 2, 153--199, 201--275.

\bibitem[Gunning-1990]{G90}
R. Gunning,
\textit{Introduction to Holomorphic Functions in Several Complex Variables, Volume I},
Wadsworth, Inc., Belmont, CA (1990).

 
 \bibitem[Harvey-Moore-1995]{HM95}
 J. Harvey, G. Moore,
Algebras, BPS states, and strings,
\textit{ArXiv preprint:} hep-th/9510182

\bibitem[Hoehn-2003]{H03}
G. Hoehn,
Generalized Moonshine for the Baby Monster,
\textit{preprint} (2003).

\bibitem[Hopkins-Kuhn-Ravenel-2000]{HKR00}
Michael J. Hopkins, Nicholas J. Kuhn, and Douglas C. Ravenel, Generalized group characters 
and complex oriented cohomology theories. \textit{J. Amer. Math. Soc.}, \textbf{13}:3 (2000) 553--594.

\bibitem[Jurisich-Lepowsky-Wilson-1995]{JLW95}
E. Jurisich, J. Lepowsky, R. L. Wilson,
Realizations of the Monster Lie algebra
\textit{Selecta Math.} \textbf{1}:1 (1995) 129--161.

\bibitem[Jurisich-2004]{J04}
E. Jurisich,
A resolution for standard modules of Borcherds Lie algebras
\textit{J. Pure Appl. Alg.} \textbf{192} (2004) 149--158.

\bibitem[Kac-1990]{K90}
V. Kac,
\textit{Infinite dimensional Lie algebras},
Third edition, Cambridge University Press (1990).


\bibitem[Kozlov-1994]{K94}
D. Kozlov,
\textit{On completely replicable functions}
Master's Thesis (1994), Lund University, Sweden.


\bibitem[McGraw-2003]{M03}
W. J. McGraw,
The rationality of vector valued modular forms associated with the Weil representation,
\textit{Math. Ann.} \textbf{326} (2003) No. 1, 105--122.

\bibitem[Miyamoto-2004]{M04}
M. Miyamoto,
Modular invariance of vertex operator algebras satisfying $C_2$ cofiniteness,
\textit{Duke Math. J.} \textbf{122} (2004) 51--91.

\bibitem[Norton-1984]{N84}
S. Norton,
More on Moonshine,
\textit{Computational Group Theory} ed. M. D. Atkinson (1984) 185--193, Academic Press.

\bibitem[Norton-1987]{N87}
S. Norton,
Generalized moonshine,
\textit{Proc. Sympos. Pure Math.} \textbf{47} Part 1,  The Arcata Conference on Representations of Finite Groups (Arcata, Calif., 1986)  209--210, Amer. Math. Soc., Providence, RI (1987).

\bibitem[Norton-2001]{N01} 
S. Norton,
From Moonshine to the Monster,
\textit{Proceedings on Moonshine and related topics (Montr\'eal, QC, 1999)},  163--171,
CRM Proc. Lecture Notes \textbf{30}, Amer. Math. Soc., Providence, RI (2001).

\bibitem[Queen-1981]{Q81}
L. Queen,
Modular Functions arising from some finite groups,
\textit{Mathematics of Computation}  \textbf{37} (1981) No. 156, 547--580.

\bibitem[Scheithauer-2000]{S00}
N. Scheithauer,
The Fake Monster Superalgebra,
\textit{Adv. Math.} \textbf{151} (2000) 226--269.

\bibitem[Scheithauer-2004]{S04}
N. R. Scheithauer,
Generalized Kac-Moody algebras, automorphic forms and Conway's group I,
\textit{Adv. Math.} \textbf{183} (2004) no. 2, 240--270.

\bibitem[Scheithauer-2008]{S08}
N. R. Scheithauer,
Generalized Kac-Moody algebras, automorphic forms and Conway's group II,
\textit{J. Reine Angew. Math.} \textbf{625} (2008), 125--154.

\bibitem[Scheithauer-2009]{S09}
N. R. Scheithauer,
The Weil representation of $SL_2(\Z)$ and some applications.
\textit{Int. Math. Res. Not. IMRN} (2009) no. 8, 1488--1545.

\bibitem[Tuite-2008]{T08}
M. Tuite,
Monstrous and Generalized Moonshine and Permutation Orbifolds,
\textit{ArXiv preprint:} 0811.4525

\bibitem[Willerton-2008]{W08}
S. Willerton
The twisted Drinfeld double of a finite group via gerbes and finite groupoids,
\textit{Algebraic \& Geometric Topology} \textbf{8} (2008) 1419--1457.

\end{thebibliography}
\end{document}